\def\frk{\frak}               
\def\Phi{{\frk n}}
\def\Phi{{\frk N}}
\def\opn#1#2{\def#1{\operatorname{#2}}} 
\opn\chara{char} \opn\length{\ell} \opn\pd{pd} \opn\rk{rk}
\opn\projdim{proj\,dim} \opn\injdim{inj\,dim} \opn\rank{rank}
\opn\depth{depth} \opn\grade{grade} \opn\height{height}
\opn\embdim{emb\,dim} \opn\codim{codim}
\opn\Tr{Tr} \opn\bigrank{big\,rank}
\opn\superheight{superheight}\opn\lcm{lcm}
\opn\trdeg{tr\,deg}
\opn\reg{reg} \opn\lreg{lreg} \opn\ini{in} \opn\lpd{lpd}
\opn\size{size}\opn\bigsize{bigsize}
\opn\cosize{cosize}\opn\bigcosize{bigcosize}
\opn\sdepth{sdepth}\opn\sreg{sreg}
\opn\link{link}\opn\fdepth{fdepth}
\opn\index{index}
\opn\index{index}
\opn\indeg{indeg}
\opn\N{N}
\opn\SSC{SSC}
\opn\SC{SC}
\opn\lk{lk}
\opn\div{div} \opn\Div{Div} \opn\cl{cl} \opn\Cl{Cl}
\opn\Spec{Spec} \opn\Supp{Supp} \opn\supp{supp} \opn\Sing{Sing}
\opn\Ass{Ass} \opn\Min{Min}\opn\Mon{Mon} \opn\dstab{dstab} \opn\astab{astab}
\opn\Syz{Syz}
\opn\reg{reg}
\opn\Ann{Ann} \opn\Rad{Rad} \opn\Soc{Soc}
\opn\Im{Im} \opn\Ker{Ker} \opn\Coker{Coker} \opn\Am{Am}
\opn\Hom{Hom} \opn\Tor{Tor} \opn\Ext{Ext} \opn\End{End}\opn\Der{Der}
\opn\Aut{Aut} \opn\id{id}
\opn\nat{nat}
\opn\pff{pf}
\opn\Pf{Pf} \opn\GL{GL} \opn\SL{SL} \opn\mod{mod} \opn\ord{ord}
\opn\Gin{Gin} \opn\Hilb{Hilb}\opn\sort{sort}
\opn\initial{init}
\opn\ende{end}
\opn\height{height}
\opn\type{type}
\opn\aff{aff} \opn\con{conv} \opn\relint{relint} \opn\st{st}
\opn\lk{lk} \opn\cn{cn} \opn\core{core} \opn\vol{vol}
\opn\link{link} \opn\Link{Link}\opn\lex{lex}
\opn\gr{gr}
\def\pot#1#2{#1[\kern-0.28ex[#2]\kern-0.28ex]}
\opn\dirlim{\underrightarrow{\lim}}
\opn\inivlim{\underleftarrow{\lim}}
\def\Implies{\ifmmode\Longrightarrow \else
        \unskip${}\Longrightarrow{}$\ignorespaces\fi}
\def\implies{\ifmmode\Rightarrow \else
        \unskip${}\Rightarrow{}$\ignorespaces\fi}
\def\iff{\ifmmode\Longleftrightarrow \else
        \unskip${}\Longleftrightarrow{}$\ignorespaces\fi}
\newtheorem{Theorem}{Theorem}[section]
 \newtheorem{Lemma}[Theorem]{Lemma}
 \newtheorem{Corollary}[Theorem]{Corollary}
 \newtheorem{Proposition}[Theorem]{Proposition}
 \newtheorem{Definition}[Theorem]{Definition}
\let\epsilon\varepsilon
\let\kappa=\varkappa
\def\qed{\ifhmode\textqed\fi
      \ifmmode\ifinner\quad\qedsymbol\else\dispqed\fi\fi}
\def\textqed{\unskip\nobreak\penalty50
       \hskip2em\hbox{}\nobreak\hfil\qedsymbol
       \parfillskip=0pt \finalhyphendemerits=0}
\def\dispqed{\rlap{\qquad\qedsymbol}}
\opn\dis{dis}
\def\pnt{{\raise0.5mm\hbox{\large\bf.}}}
\opn\Lex{Lex}
\begin{document}

\title{ Frobenius Categories  over  a Triangular Matrix Ring and Comma Categories }

\author{Dancheng Lu}

\address{School  of Mathematical Sciences, Soochow University, 215006 Suzhou, P.R.Chinaa}
\email{ludancheng@suda.edu.cn}

\author{Panpan Xie}

\address{School  of Mathematical Sciences, Soochow University, 215006 Suzhou, P.R.Chinaa}
\email{20184207032@stu.suda.edu.cn}

\keywords{pushout, pullback, resolving subcategory, co-resolving subcategory, Frobenius category, injective, projective, triangulated equivalence, recollement  }

\subjclass[2010]{16D40; 16D50; 16D90}

\begin{abstract} We introduce the dual notions of $\mathcal{E}(\mathcal{X},M,\mathcal{Y})$ and $\mathcal{M}(\mathcal{X},M,\mathcal{Y})$, and  investigate when they have enough injective objects or projective objects, when they are resolving or co-resolving, and when they are Frobenius categories.  In the case that they are Frobenius categories,  we establish a recollement of their stable categories.  Finally some applications to comma categories are given.
\end{abstract}

\maketitle

\section{Introduction}

Throughout this paper, all rings are nonzero associative rings with identity and all modules are unitary and left  unless otherwise stated. For a ring $R$, we write $R$-Mod for the category of all left $R$-modules.
  Given rings $A$, $B$ and an $(A,B)$-bimodule  $M$, we can build an abelian category, denoted by $\mathrm{Rep}(A,M,B)$,  which contains $A$-Mod and $B$-Mod as full subcategories. The objects of $\mathrm{Rep}(A,M,B)$ are all triples $\begin{bmatrix}X\\Y\end{bmatrix}_{\phi}$   such that $X\in A\text{-Mod}$, $Y\in B\text{-Mod}$ and $\phi:M\otimes_B Y\rightarrow X$ is an $A$-homomorphism.  A morphism from $\begin{bmatrix}X\\Y\end{bmatrix}_{\phi}$  to   $\begin{bmatrix}X_1\\Y_1\end{bmatrix}_{\phi_1}$    is a pair $\begin{bmatrix}f\\g\end{bmatrix}$ such that   $f\in \Hom_A(X,X')$, $g\in \Hom_B(Y,Y')$ and the following diagram is commutative:  \begin{equation*}\xymatrix{
                                                                                M\otimes_B Y \ar[d]_{1_M\otimes g} \ar[rr]^(.6){\phi} && X \ar[d]^{f} \\
                                                                                 M\otimes_B Y'  \ar[rr]^(.6){\phi'} && X' .  }
                                                  \end{equation*}
It is well-known that $\mathrm{Rep}(A,M,B)$ is equivalent to the module category $\Lambda$-Mod, where \begin{equation*}
\Lambda:=\begin{bmatrix}A&M\\0&B\end{bmatrix}
\end{equation*} is a formal triangular
matrix ring.

Formal triangular matrix rings play an important
role in ring theory and the representation theory of algebra. They are often used to construct  examples and counterexamples, which make the theory of rings and modules more abundant and interesting. So the properties of formal triangular matrix rings and modules over them have attracted
  many interests (see \cite{AS,AR,GR,HV1,HV2,Mao1,Mao2,XZZ}). These investigations usually focus  to build some special subclasses or subcategories of $\Lambda$-Mod by using corresponding  subclasses or subcategories of $A$-Mod and $B$-Mod.

Projective objects and injective objects of $\mathrm{Rep}(A,M,B)$ are classified by A. Haghany and K. Varadarajan in \cite{HV1} and \cite{HV2} respectively.

 Let $U=\begin{bmatrix}X\\Y\end{bmatrix}_{\phi}\in \mathrm{Rep}(A,M,B)$.
\begin{Theorem} \label{pro}
 {\rm (}\cite[Theorem 3.1]{HV2} {\rm )}  $U$ is a projective object of $\mathrm{Rep}(A,M,B)$ if and only if $Y$ is a projective $B$-module,  $\phi$ is injective and $\Coker(\phi)$ is a projective $A$-module.
\end{Theorem}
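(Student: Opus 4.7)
My plan is to identify the two indecomposable projective summands of $\Lambda$ inside $\mathrm{Rep}(A,M,B)$ and then use two adjunctions to transfer projectivity between the module categories and $\mathrm{Rep}(A,M,B)$. Writing $e_1=\begin{bmatrix}1&0\\0&0\end{bmatrix}$ and $e_2=\begin{bmatrix}0&0\\0&1\end{bmatrix}$, the decomposition $\Lambda=\Lambda e_1\oplus\Lambda e_2$ translates under the equivalence $\Lambda\text{-Mod}\simeq\mathrm{Rep}(A,M,B)$ into $\begin{bmatrix}A\\0\end{bmatrix}_{0}\oplus\begin{bmatrix}M\\B\end{bmatrix}_{1_M}$. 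This motivates introducing the two functors
\[
T_A(X):=\begin{bmatrix}X\\0\end{bmatrix}_{0},\qquad T_B(Y):=\begin{bmatrix}M\otimes_B Y\\Y\end{bmatrix}_{1_{M\otimes_B Y}},
\]
and verifying by a direct $\Hom$-computation that $T_A$ is left adjoint to the exact component projection $\begin{bmatrix}X\\Y\end{bmatrix}_\phi\mapsto X$ and $T_B$ is left adjoint to the exact component projection $\begin{bmatrix}X\\Y\end{bmatrix}_\phi\mapsto Y$. Consequently $T_A$ and $T_B$ preserve projectives, and every free $\Lambda$-module has the shape $T_A(F_1)\oplus T_B(F_2)$ with $F_1,F_2$ free.

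For the ``if'' direction I would use projectivity of $\Coker\phi$ to split the short exact sequence $0\to M\otimes_B Y\to X\to\Coker\phi\to 0$; the resulting isomorphism $U\cong T_B(Y)\oplus T_A(\Coker\phi)$ exhibits $U$ as a direct sum of projectives, hence projective.

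For the ``only if'' direction I would choose a surjection onto $U$ from a free $\Lambda$-module and split it, realising $U$ as a direct summand of some $F=T_A(F_1)\oplus T_B(F_2)$ with $F_1,F_2$ free. The structure map of $F$ is the split monomorphism $0\oplus 1_{M\otimes_B F_2}:M\otimes_B F_2\to F_1\oplus(M\otimes_B F_2)$, whose cokernel is $F_1$. The key observation is that finite direct sums in $\mathrm{Rep}(A,M,B)$ are componentwise in $X$, in $Y$, and in the map $\phi$, so the summand decomposition $F\cong U\oplus U'$ produces matched decompositions at all three slots: $Y$ is a summand of $F_2$ (hence projective), $\phi$ is a summand of an injection (hence injective), and $\Coker\phi$ is a summand of $F_1$ (hence projective).

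The main subtlety I anticipate is the assertion that a direct-sum decomposition in $\mathrm{Rep}(A,M,B)$ really decomposes the map $\phi$ alongside $X$ and $Y$. This is a routine verification from the definition of morphisms in $\mathrm{Rep}(A,M,B)$, but it is the pivot on which the necessity direction turns; once it is in place, everything reduces to manipulating the free case through the functors $T_A$ and $T_B$.
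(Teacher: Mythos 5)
Your argument is correct and complete. Note, however, that the paper does not prove this statement at all: it is imported verbatim from Haghany--Varadarajan \cite[Theorem 3.1]{HV2} and used as a black box, so there is no in-paper proof to compare against. Your route --- decomposing $\Lambda=\Lambda e_1\oplus\Lambda e_2$ as $T_A(A)\oplus T_B(B)$, checking that $T_A$ and $T_B$ are left adjoint to the exact component projections (hence preserve projectives and commute with arbitrary direct sums), splitting $0\to M\otimes_B Y\xrightarrow{\phi} X\to\Coker(\phi)\to 0$ for sufficiency, and realising $U$ as a summand of $T_A(F_1)\oplus T_B(F_2)$ for necessity --- is the standard proof of this classification. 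The one subtlety you flag is genuinely the crux but is indeed routine: the biproduct of $\begin{bmatrix}X\\Y\end{bmatrix}_{\phi}$ and $\begin{bmatrix}X'\\Y'\end{bmatrix}_{\phi'}$ is $\begin{bmatrix}X\oplus X'\\Y\oplus Y'\end{bmatrix}_{\phi\oplus\phi'}$ (using $M\otimes_B(Y\oplus Y')\cong (M\otimes_B Y)\oplus(M\otimes_B Y')$, which holds for any additive functor), verified directly from the biproduct axioms; from this, injectivity of $\phi_F$ passes to the summand $\phi$, and $\Coker(\phi)$ is a summand of $\Coker(\phi_F)\cong F_1$.
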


Denote by $\widetilde{\phi}$  the morphism $\tau_{X,Y}(\phi)$,  where $\tau_{X,Y}$ is  the natural isomorphism from $\Hom_A(M\otimes_B Y, X)$ to $\Hom_B(Y, \Hom_A(M,X))$.

\begin{Theorem}\label{in}
 {\rm (}\cite[Proposition 5.1]{HV1} {\em and} \cite[p. 956]{AS}{\rm )} $U$ is an injective object of $\mathrm{Rep}(A,M,B)$ if and only if $X$ is an injective $A$-module,  $\widetilde{\phi}$ is surjective and $\Ker(\widetilde{\phi})$ is an injective $B$-module.
\end{Theorem}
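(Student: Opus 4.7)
The plan is to reduce the characterization to a structural decomposition: $U=\begin{bmatrix}X\\Y\end{bmatrix}_{\phi}$ is injective in $\mathrm{Rep}(A,M,B)$ if and only if $U\cong H_A(E)\oplus K_B(F)$ for some injective $A$-module $E$ and injective $B$-module $F$, where
\[
H_A(E):=\begin{bmatrix}E\\ \Hom_A(M,E)\end{bmatrix}_{\mathrm{ev}},\qquad K_B(F):=\begin{bmatrix}0\\ F\end{bmatrix}_{0},
\]
and $\mathrm{ev}\colon M\otimes_B\Hom_A(M,E)\to E$ is the evaluation map. The backbone is that the componentwise-exact forgetful functors $P_A\colon U\mapsto X$ and $Q_B\colon U\mapsto Y$ admit $H_A$ and $K_B$ as right adjoints: for a morphism $(f,g)\colon U\to H_A(E)$, the defining commutative square forces $g$ to equal the adjunct of $f\circ\phi$ under the $(M\otimes_B-,\Hom_A(M,-))$ adjunction, so $(f,g)\mapsto f$ identifies $\Hom_{\mathrm{Rep}}(U,H_A(E))\cong\Hom_A(X,E)$, while $\Hom_{\mathrm{Rep}}(U,K_B(F))\cong\Hom_B(Y,F)$ is immediate. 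Consequently $H_A$ and $K_B$ send injective modules to injective objects of $\mathrm{Rep}(A,M,B)$.

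For sufficiency, assume $X$ is $A$-injective, $\widetilde\phi$ is surjective, and $F:=\Ker\widetilde\phi$ is $B$-injective. The short exact sequence $0\to F\to Y\xrightarrow{\widetilde\phi}\Hom_A(M,X)\to 0$ splits, say by $s$; unwinding the tensor--Hom adjunction yields $\phi\circ(1_M\otimes s)=\mathrm{ev}$, and $\phi$ vanishes on $M\otimes_B F$. The $B$-module splitting $Y\cong\Hom_A(M,X)\oplus F$ therefore upgrades to an isomorphism $U\cong H_A(X)\oplus K_B(F)$, which is injective.

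For necessity, suppose $U$ is injective and pick $A$- and $B$-injective envelopes $i_X\colon X\hookrightarrow E_X$ and $j_Y\colon Y\hookrightarrow E_Y$. The morphism $U\to H_A(E_X)\oplus K_B(E_Y)$ whose first component is $H_A(i_X)\circ(1_X,\widetilde\phi)$ and whose second is $(0,j_Y)$ is monic because $j_Y$ is. Since $U$ is injective this embedding splits, so restricting the retraction of the $H_A(E_X)$-summand to $\Hom_A(M,X)\subseteq\Hom_A(M,E_X)$ and translating its commutativity square via the tensor--Hom adjunction produces a $B$-linear section of $\widetilde\phi$; thus $\widetilde\phi$ is a split epimorphism. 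Repeating the sufficiency argument yields $U\cong H_A(X)\oplus K_B(\Ker\widetilde\phi)$. Each summand is a summand of the injective $U$ and hence injective in $\mathrm{Rep}(A,M,B)$, so the reflection lemmas---if $H_A(E)$ (resp.\ $K_B(F)$) is injective, apply $H_A$ (resp.\ $K_B$) to an injective embedding of $E$ (resp.\ $F$), split it, and apply $P_A$ (resp.\ $Q_B$) to obtain a retraction---force $X$ and $\Ker\widetilde\phi$ to be injective over $A$ and $B$, respectively. The main obstacle is precisely the extraction of the section of $\widetilde\phi$ from the retraction of $U\hookrightarrow H_A(E_X)\oplus K_B(E_Y)$: one must carefully track how the commutativity constraints on morphisms in $\mathrm{Rep}(A,M,B)$ interact with the $(M\otimes_B-,\Hom_A(M,-))$ adjunction.
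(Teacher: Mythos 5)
Your proof is correct, but note that the paper does not prove this statement at all: Theorem \ref{in} is imported verbatim from \cite[Proposition 5.1]{HV1} and \cite[p.~956]{AS}, so there is no in-paper argument to compare against. Your route is the natural categorical one: you observe that the exact forgetful functors $U\mapsto X$ and $U\mapsto Y$ have right adjoints $H_A\colon E\mapsto\begin{bmatrix}E\\ \Hom_A(M,E)\end{bmatrix}_{\mathrm{ev}}$ and $K_B\colon F\mapsto\begin{bmatrix}0\\F\end{bmatrix}_0$ (the identification $\Hom(U,H_A(E))\cong\Hom_A(X,E)$ is exactly the statement that the compatibility square forces $g=\Hom_A(M,f)\circ\widetilde{\phi}$), hence both preserve injectives, and you then reduce the theorem to the decomposition $U\cong H_A(X)\oplus K_B(\Ker\widetilde{\phi})$. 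I checked the one step you flag as delicate: writing the retraction of $U\hookrightarrow H_A(E_X)\oplus K_B(E_Y)$ as $((a,b),(0,d))$, the first component of the splitting identity gives $a\circ i_X=1_X$ (so $X$ is already injective at this point), the compatibility square for $(a,b)$ transposes to $\widetilde{\phi}\circ b=\Hom_A(M,a)$, and the map $d$ satisfies $\widetilde{\phi}\circ d=0$; hence $s:=b\circ\Hom_A(M,i_X)$ satisfies $\widetilde{\phi}\circ s=\Hom_A(M,a\circ i_X)=1$, which is the asserted section. The remaining pieces (monomorphisms in $\mathrm{Rep}(A,M,B)$ are the componentwise ones, $\phi=\mathrm{ev}\circ(1_M\otimes\widetilde{\phi})$ kills $M\otimes_B\Ker\widetilde{\phi}$ and restricts to $\mathrm{ev}$ on the image of $1_M\otimes s$, and the reflection of injectivity along $P_AH_A\cong\mathrm{id}$ and $Q_BK_B\cong\mathrm{id}$) all hold. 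This argument has the added benefit of producing the structural classification of injectives as $H_A(E)\oplus K_B(F)$, which is strictly more information than the stated criterion and is the form in which the result is actually used in Section \ref{3} (e.g.\ in the proofs of Theorems \ref{co-resolving} and \ref{frobenius}).
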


 In order to simplify the notation, we introduce the category $\mathrm{Rep}_h(A,M,B)$. Its objects are all triples  $(X,Y)_{\varphi}$ such that $X\in A\text{-Mod}$, $Y\in B\text{-Mod}$  and $\varphi: Y\rightarrow \mathrm{Hom}_A(M,X)$ is a $B$-homomorphism.  A morphism from  $(X,Y)_{\varphi}$ to $(X_1, Y_1)_{\varphi_1}$ is a pair $(f,g)$ such that $f\in \Hom_A(X,X_1)$, $g\in \Hom_B(Y,Y_1)$ and the following diagram is commutative:
\begin{equation*}
\xymatrix{
  Y \ar[d]^{g} \ar[rr]^(.35){\varphi} && \mathrm{Hom}_A(M,X)\ar[d]^{\mathrm{Hom}_A(M,f)}\\
  Y_1\ar[rr]^(.35){\varphi_1} && \mathrm{Hom}_A(M,X_1).                   }
\end{equation*}
It is easy to see that $\mathrm{Rep}_h(A,M,B)$ and  $\mathrm{Rep}(A,M,B)$ are equivalent as abelian categories. Furthermore, using this notation,  Theorem~\ref{in} can be restated as follows:
\begin{Theorem}\label{in1}
 A triple $(X,Y)_{\varphi}$ is an injective object of $\mathrm{Rep}_h(A,M,B)$ if and only if $X$ is an injective $A$-module, $\Ker(\varphi)$ is an injective $B$-module and $\varphi$ is surjective.
 \end{Theorem}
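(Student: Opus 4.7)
The plan is to deduce Theorem~\ref{in1} directly from Theorem~\ref{in} by transporting the statement along the equivalence between $\mathrm{Rep}(A,M,B)$ and $\mathrm{Rep}_h(A,M,B)$ that the paragraph preceding the statement already announces.

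First I would make this equivalence explicit. Define a functor $F\:\mathrm{Rep}(A,M,B)\to\mathrm{Rep}_h(A,M,B)$ that sends $\begin{bmatrix}X\\Y\end{bmatrix}_{\phi}$ to $(X,Y)_{\widetilde{\phi}}$, where $\widetilde{\phi}=\tau_{X,Y}(\phi)$, and that acts by $\begin{bmatrix}f\\g\end{bmatrix}\mapsto(f,g)$ on morphisms. Well-definedness on morphisms amounts to the observation that, by the naturality of the adjunction $\tau$, the commutativity of the defining square of a morphism in $\mathrm{Rep}(A,M,B)$ is equivalent to the commutativity of the defining square of the corresponding pair in $\mathrm{Rep}_h(A,M,B)$. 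Because every $\tau_{X,Y}$ is a natural bijection of abelian groups, $F$ is additive, fully faithful and essentially surjective, hence an equivalence of abelian categories; in particular, it preserves and reflects injective objects.

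Given $(X,Y)_{\varphi}\in\mathrm{Rep}_h(A,M,B)$, set $\phi:=\tau_{X,Y}^{-1}(\varphi)$, so that $F(\begin{bmatrix}X\\Y\end{bmatrix}_{\phi})=(X,Y)_{\varphi}$. By the previous step, $(X,Y)_{\varphi}$ is injective in $\mathrm{Rep}_h(A,M,B)$ if and only if $\begin{bmatrix}X\\Y\end{bmatrix}_{\phi}$ is injective in $\mathrm{Rep}(A,M,B)$. Now invoke Theorem~\ref{in}: the latter holds iff $X$ is an injective $A$-module, $\widetilde{\phi}=\varphi$ is surjective, and $\Ker(\widetilde{\phi})=\Ker(\varphi)$ is an injective $B$-module, which is exactly what Theorem~\ref{in1} asserts.

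The only verification that requires any care is the correspondence of the two commutativity conditions under $\tau$, and this is nothing more than the standard naturality of the Hom--tensor adjunction, so no genuine obstacle is expected.
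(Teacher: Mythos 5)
Your proposal is correct and is exactly the route the paper takes: the paper simply asserts that $\mathrm{Rep}_h(A,M,B)\simeq\mathrm{Rep}(A,M,B)$ (via the adjunction isomorphism $\tau_{X,Y}$) and then restates Theorem~\ref{in} in the new notation, which is precisely your transport argument with the naturality check made explicit.
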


Inspired by  these  results,   we  introduce and study  the following notions.

\begin{Definition}\label{definition1} \em  Let $\mathcal{X}$ (resp.  $\mathcal{Y}$) be a full category of $A$-Mod (resp. $B$-Mod) closed under isomorphisms and containing  zero module. We define $\mathcal{M}(\mathcal{X},M,\mathcal{Y})$ to be the full category of $\mathrm{Rep}(A,M,B)$ whose objects are triples  $\begin{bmatrix}X\\Y\end{bmatrix}_{\phi}\in \mathrm{Rep}(A,M,B)$
such that $Y\in \mathcal{Y}$,  $\Coker(\phi)\in \mathcal{X}$ and $\phi$ is injective.

 Dually, $\mathcal{E}(\mathcal{X},M,\mathcal{Y})$ is defined to be the full category of $\mathrm{Rep}_h(A,M,B)$ whose objects are triples  $(X,Y)_{\varphi}\in \mathrm{Rep}_h(A,M,B)$
such that $X\in \mathcal{X}$, $\Ker(\varphi)\in \mathcal{Y}$ and $\varphi$ is surjective.
\end{Definition}
The notions of $\mathcal{E}(\mathcal{X},M,\mathcal{Y})$  and $\mathcal{M}(\mathcal{X},M,\mathcal{Y})$ are dual to each other and so are their properties.
  We write $\mathcal{M}(A,M,\mathcal{Y})$ and $\mathcal{E}(A,M,\mathcal{Y})$ for $\mathcal{M}(\mathcal{X},M,\mathcal{Y})$ and  $\mathcal{E}(\mathcal{X},M,\mathcal{Y})$ respectively if $\mathcal{X}$ is $A\text{-Mod}$. The categories $\mathcal{M}(A,M,\mathcal{Y})$,  $\mathcal{M}(A,M,B)$ and etc are similarly defined. Recall that   $\mathcal{M}(A,M,B)$ and  $\mathcal{E}(A,M,B)$ have been  studied in \cite{XZZ} in the context of artinian algebras.

    The paper is arranged as follows. In Section 3, we study the properties of $\mathcal{E}(\mathcal{X},M,\mathcal{Y})$. First, we classify  projective objects of   $\mathcal{E}(\mathcal{X},M,\mathcal{Y})$ when it is an exact category and prove that  $\mathcal{E}(\mathcal{X},M,\mathcal{Y})$ has enough projective objects if and only if $\mathcal{X}$ and $\mathcal{Y}$ have enough projective objects under  mild conditions. We discuss when $\mathcal{E}(\mathcal{X},M,\mathcal{Y})$ is  co-resolving and show there is a unique largest co-resolving subcategory of the form $\mathcal{E}(\mathcal{X},M,\mathcal{Y})$.  Based on these results, we characterize when $\mathcal{E}(\mathcal{X},M,\mathcal{Y})$ is a Frobenius category. Finally, it is shown that the stable category of $\mathcal{E}(\mathcal{X},M,\mathcal{Y})$ is a triangulated  recollement of stable categories of $\mathcal{X}$ and $\mathcal{Y}$ when they are all Frobenius categories.  In Section 4 we write  down  the dual results on $\mathcal{M}(\mathcal{X},M,\mathcal{Y})$   without giving their proofs. In the last section,  some of results  in  previous sections are extended to those of some special subcategories of comma categories.

\section{Preliminaries}

In this section we recall some notions and results which we will use later.

We refer to \cite[Section 2.1]{H} for the definition of an {\it  exact category}.  Let  $\mathcal{C}$ be a full subcategory of an abelian category which is closed under extensions. Then $\mathcal{C}$ carries a canonical exact structure. An object $P\in \mathcal{C}$ is  {\it projective} if every short exact sequence $0\rightarrow X_1\rightarrow X_2\rightarrow X_3\rightarrow 0$ in $\mathcal{C}$ induces a short exact sequence  $0\rightarrow \Hom_{\mathcal{C}}(P,X_1)\rightarrow\Hom_{\mathcal{C}}(P, X_2)\rightarrow \Hom_{\mathcal{C}}(P,X_3)\rightarrow 0$. The exact category  $\mathcal{C}$ {\it has enough projective objects} if every $X\in \mathcal{C}$ induces a short exact sequence $0\rightarrow K\rightarrow P\rightarrow X\rightarrow 0$ in $\mathcal{C}$ such that $P$ is projective.   {\it Injective} objects of $\mathcal{C}$ and that $\mathcal{C}$ {\it has enough injective objects} are defined dually. We write $\mathcal{I}(\mathcal{C})$ and $\mathcal{P(C)}$ for the classes of injective objects and projective objects of  $\mathcal{C}$ respectively. An exact category is a {\it Frobenius category} if it has enough projective objects and injective objects, and if its projective and injective objects coincide, i.e., $\mathcal{I(C)}=\mathcal{P(C)}$.

The notion of a Frobenius category is very important because its stable category is a triangulated category and many triangulated categories including derived category and  homotopy category of an abelian category are {\it algebraic}. Recall that a triangulated category is  algebraic if it is the stable category of a Frobenius category.  Let $\mathcal{C}$ be a Frobenius category. By definition, the stable category $\mathrm{St}\ \mathcal{C}$ has the same objects as $\mathcal{C}$ and for any $X,Y\in \mathcal{C}$, $$\Hom_{\mathrm{St}\ \mathcal{C}}(X,Y):=\Hom_{\mathcal{C}}(X,Y)/\mathcal{P}(X,Y).$$
Here, $\mathcal{P}(X,Y)$ stands for the set of morphisms from $X$ to $Y$ that factor through a projective object. For each $X\in B$ we fix a short exact sequence $0\rightarrow X \rightarrow I(X)\rightarrow T X\rightarrow 0$ such that $I(X)$ is an injective object of $\mathcal{C}$.     Let $\xi: 0\rightarrow X\xrightarrow{f}Y\xrightarrow{g}Z\rightarrow 0$ be a short exact sequence in $\mathcal{C}$. Then it induces a commutative diagram:
\begin{equation}\tag{Fig 1}\label{CD} \xymatrix{0\ar[r] &X\ar[r]^{f}\ar@{=}[d]&Y\ar[r]^g\ar[d]&Z\ar[d]^{h}\ar[r]&0\\
0\ar[r] &X\ar[r]&I(X)\ar[r]&TX\ar[r]&0.}
\end{equation}
The sequence $(f,g,h)$ is called a {\it standard triangle} induced by $\xi$.  The image in $\mathrm{St}\ \mathcal{C}$  of such a sequence is an {\it exact triangle} of $\mathrm{St}\ \mathcal{C}$ and every exact triangle of $\mathrm{St}\ \mathcal{C}$ arises in this  way, see \cite[Proposition 3.3.2]{H}.

\begin{Lemma} \label{tri}  Let $\mathcal{C}$ and $\mathcal{D}$ be Frobenius categories. If  $\mathbf{F}:\mathcal{C}\rightarrow \mathcal{D}$ is an additive functor satisfying
$\mathrm{(1)}$ $\mathbf{F}$ is exact, $\mathrm{(2)}$ $\mathbf{F}(\mathcal{I}(\mathcal{C}))\subseteq \mathcal{I}(\mathcal{D})$, then $\overline{\mathbf{F}}: \mathrm{St}\  \mathcal{C}\rightarrow \mathrm{St}\ \mathcal{D}$ is a triangulated functor.
\end{Lemma}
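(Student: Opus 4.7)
The plan is to verify the three defining requirements for $\overline{\mathbf{F}}$ to be a triangulated functor between the stable categories: (a) $\overline{\mathbf{F}}$ is a well-defined additive functor, (b) there is a natural isomorphism $\eta : \overline{\mathbf{F}}\circ T \Rightarrow T\circ \overline{\mathbf{F}}$, and (c) $(\overline{\mathbf{F}},\eta)$ sends standard triangles to exact triangles. The key input from the hypothesis is that $\mathbf{F}$ preserves short exact sequences and sends $\mathcal{I}(\mathcal{C})=\mathcal{P}(\mathcal{C})$ into $\mathcal{I}(\mathcal{D})=\mathcal{P}(\mathcal{D})$, which makes all three conditions essentially formal once the bookkeeping is set up.

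First I would dispatch (a): if a morphism $f : X\to Y$ in $\mathcal{C}$ factors as $X\to P\to Y$ through some projective $P$, then additivity gives $\mathbf{F}(f)=\mathbf{F}(X)\to \mathbf{F}(P)\to \mathbf{F}(Y)$, and $\mathbf{F}(P)\in \mathcal{P}(\mathcal{D})$ by hypothesis (using $\mathcal{P}=\mathcal{I}$ in a Frobenius category). Hence $\mathbf{F}$ descends to a well-defined additive $\overline{\mathbf{F}}$. For (b), apply $\mathbf{F}$ to the chosen sequence $0\to X\to I(X)\to TX\to 0$ to obtain an exact sequence $0\to \mathbf{F}(X)\to \mathbf{F}(I(X))\to \mathbf{F}(TX)\to 0$ with $\mathbf{F}(I(X))\in \mathcal{I}(\mathcal{D})$. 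Comparing it with the chosen sequence $0\to \mathbf{F}(X)\to I(\mathbf{F}(X))\to T\mathbf{F}(X)\to 0$ and using injectivity of $I(\mathbf{F}(X))$ produces a morphism of short exact sequences whose third component $\eta_X : \mathbf{F}(TX)\to T\mathbf{F}(X)$ becomes an isomorphism in $\mathrm{St}\ \mathcal{D}$ (two such extensions differ by a map factoring through an injective). Naturality in $X$ follows from the standard lifting uniqueness up to homotopy.

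For (c), take an exact triangle in $\mathrm{St}\ \mathcal{C}$; by \cite[Proposition 3.3.2]{H} I may assume it is a standard triangle $(f,g,h)$ arising from a short exact sequence $\xi : 0\to X\to Y\to Z\to 0$ via the commutative diagram (Fig 1). Applying $\mathbf{F}$ to both rows of (Fig 1) produces a commutative diagram of short exact sequences in $\mathcal{D}$, still with injective middle term $\mathbf{F}(I(X))$ in the bottom row. I would then splice in the comparison morphism from step (b) to replace the bottom row with the chosen sequence defining $T\mathbf{F}(X)$, obtaining precisely the diagram that defines a standard triangle induced by $\mathbf{F}(\xi)$. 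This shows $(\overline{\mathbf{F}}(f),\overline{\mathbf{F}}(g),\eta_{X}\circ \overline{\mathbf{F}}(h))$ is a standard, hence exact, triangle in $\mathrm{St}\ \mathcal{D}$.

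The main obstacle is not any single hard step but rather the care required in step (b): one must check that $\eta_X$ is independent (up to projective-factoring) of the choice of lift $\mathbf{F}(I(X))\to I(\mathbf{F}(X))$, that it is natural in $X$, and that it behaves correctly when one substitutes it into the standard-triangle diagram of step (c). All of this is a formal consequence of the homotopy-uniqueness of liftings against injective objects, which holds verbatim in any Frobenius category, so the proof amounts to assembling these standard facts rather than proving anything genuinely new.
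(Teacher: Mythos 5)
Your proposal is correct and follows essentially the same route as the paper: the paper's (very terse) proof consists precisely of your step (c) — apply $\mathbf{F}$ to the diagram (Fig 1) and use that $\mathbf{F}(I(X))$ is injective to recognize the image as a standard triangle — while your steps (a) and (b) supply the routine well-definedness and shift-compatibility bookkeeping that the paper leaves implicit. No gaps.
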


\begin{proof} Note that $\mathbf{F}$ turns \ref{CD} in $\mathcal{C}$ into the following  commutative diagram in $\mathcal{D}$
\begin{equation*}\xymatrix{0\ar[r] &F(X)\ar[r]^{F(f)}\ar@{=}[d]&F(Y)\ar[r]^{F(g)}\ar[d]&F(Z)\ar[d]^{F(h)}\ar[r]&0\\
0\ar[r] &F(X)\ar[r]&F(I(X))\ar[r]&F(TX)\ar[r]&0,}
\end{equation*} whose rows are exact. This implies $\overline{\mathbf{F}}$ maps an {\it exact triangle} in $\mathrm{St}\ \mathcal{C}$ into an exact triangle in $\mathrm{St}\ \mathcal{D}$.
\end{proof}

The tools of pushout and pullback are indispensable.  We record the following  results for the later use.

\begin{Lemma} {\em(see e.g. \cite[Proposition 5.2]{I})}\label{pushout1}\
Let \begin{equation*} \label{outback}
  \xymatrix{
  & A   \ar[d]_a \ar[r]^f
                 & B \ar[d]^b       \\
  & C \ar[r]^g   & D
   }
  \end{equation*}
  be a commutative diagram in an abelian category.

   $\mathrm{(1)}$   If it is a pullback  diagram, then

$\mathrm{(i)}$ $\mathrm{Ker} a\stackrel{\widetilde{f}}{\cong} \mathrm{Ker} b$, $\mathrm{Ker} f\stackrel{\widetilde{a}}{\cong} \mathrm{Ker} g$;

$\mathrm{(ii)}$ both  $\widetilde{g}: \mathrm{Coker} a\rightarrow \mathrm{Coker} b$ and $\widetilde{b}: \mathrm{Coker} f\rightarrow \mathrm{Coker} g$ are injective;

$\mathrm{(iii)}$ $(b,g)$ is the pushout of  $(a,f)$ if and only if either $\widetilde{g}$ or $\widetilde{b}$  is an isomorphism.
Dually,

$\mathrm{(2)}$ If it is a pushout  diagram, then

$\mathrm{(i)}$ $\mathrm{Coker} a\stackrel{\widetilde{g}}{\cong} \mathrm{Coker} b$, $\mathrm{Coker} f\stackrel{\widetilde{b}}{\cong} \mathrm{Coker} g$;

$\mathrm{(ii)}$ both  $\widetilde{f}: \mathrm{Ker} a\rightarrow \mathrm{Ker} b$ and $\widetilde{a}: \mathrm{Ker} f\rightarrow \mathrm{Ker} g$ are surjective.

$\mathrm{(iii)}$ $(a,f)$  is the pullback of $(b,g)$  if and only if either $\widetilde{f}$ or $\widetilde{a}$  is an isomorphism.

\end{Lemma}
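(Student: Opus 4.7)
The plan is to establish part (1); part (2) then follows by duality, since reversing every arrow in an abelian category interchanges pullbacks with pushouts and kernels with cokernels, and carries each assertion of (1) to the corresponding assertion of (2). By the Freyd--Mitchell embedding theorem I may argue with elements, so I will identify $A$ with the fibre product $\{(x,y)\in B\oplus C \mid b(x)=g(y)\}$ and regard $f$, $a$ as the two projections.

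For (i), observe that $\Ker a=\{(x,0)\in A\mid x\in\Ker b\}$, and $f$ restricts on this subobject to $(x,0)\mapsto x$, which is an isomorphism onto $\Ker b$; the symmetric argument in the roles of $B$ and $C$ yields $\widetilde{a}\colon\Ker f\xrightarrow{\sim}\Ker g$. For (ii), suppose $c\in C$ represents an element of $\Ker\widetilde{g}$, so that $g(c)=b(x)$ for some $x\in B$. Then $(x,c)\in A$ and $a(x,c)=c$, so $c\in\Im a$; hence $\widetilde{g}$ is injective, and an analogous argument handles $\widetilde{b}$.

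For (iii) I invoke the standard criterion that a commutative square in an abelian category is simultaneously a pullback and a pushout if and only if the associated sequence
$$0\longrightarrow A\xrightarrow{\binom{f}{-a}} B\oplus C\xrightarrow{(b\;\; g)} D\longrightarrow 0$$
is short exact. The pullback hypothesis already provides exactness at $A$ and at $B\oplus C$, so bicartesianness is equivalent to $(b\;\; g)$ being an epimorphism. A brief computation gives $\Coker(b\;\; g)=D/(\Im b+\Im g)$, and after passing to the quotient $D\to\Coker b$ the image of $g$ becomes $\Im\widetilde{g}\subseteq\Coker b$, whence $\Coker(b\;\; g)\cong\Coker\widetilde{g}$. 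Since (ii) guarantees that $\widetilde{g}$ is already injective, the square is a pushout iff $\widetilde{g}$ is surjective iff $\widetilde{g}$ is an isomorphism; the parallel argument with $B$ and $C$ interchanged gives the same equivalence expressed through $\widetilde{b}$.

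The only mildly delicate point is the identification $\Coker(b\;\; g)\cong\Coker\widetilde{g}$ in (iii), which must combine with the injectivity of $\widetilde{g}$ from (ii) in order to match the ``either $\widetilde{g}$ or $\widetilde{b}$ is an isomorphism'' phrasing of the statement. Everything else is a routine diagram chase or immediate from the universal property.
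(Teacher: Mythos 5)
The paper does not prove this lemma at all: it is recorded as a known result with the citation to Iversen's \emph{Cohomology of Sheaves}, Proposition 5.2, so there is no in-paper argument to compare yours against. Judged on its own, your proof is correct and is essentially the standard one. The reduction to elements via Freyd--Mitchell is legitimate here (after restricting to a small full abelian subcategory containing the diagram; the embedding is exact and fully faithful, so it preserves and reflects kernels, cokernels, pullbacks and pushouts), the element computations in (i) and (ii) are right, and in (iii) the identification $\Coker(b\;\;g)\cong\Coker\widetilde{g}\cong\Coker\widetilde{b}\cong D/(\Im b+\Im g)$ combined with the injectivity from (ii) correctly yields the ``either/or'' equivalence. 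The one step you lean on without proof is the characterization of a pullback square as exactness of $0\to A\to B\oplus C\to D$ at the first two spots (equivalently, bicartesian iff the whole sequence is short exact); that is standard (it is essentially the paper's Lemma 2.4, citing B\"uhler), but a fully self-contained write-up would include a line justifying it. The duality reduction for part (2) is fine since the opposite of an abelian category is abelian.
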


\begin{Lemma} {\em (see e.g. \cite[Proposition 2.12]{Bu})} \label{pushout2}   If we have one of the following commutative diagram in an abelian category with exact rows:
\begin{equation*}
  \xymatrix{
 0\ar[r]& E\ar@{=}[d]\ar[r] & A   \ar[d]_a \ar[r]^f
             & B \ar[d]^b  \ar[r]&0     \\
 0\ar[r]  & E\ar[r]   & C \ar[r]^g   & D \ar[r]&0 ,
   }
  \end{equation*}

and  \begin{equation*}
  \xymatrix{
 0\ar[r] & A   \ar[d]_a \ar[r]^f
             & B \ar[d]^b\ar[r] & E\ar@{=}[d]\ar[r] \ar[r]&0     \\
 0\ar[r]     & C \ar[r]^g   & D\ar[r] & E\ar[r]\ar[r]&0 ,
   }
  \end{equation*} then the following \begin{equation*}
  \xymatrix{
  & A   \ar[d]_a \ar[r]^f
                 & B \ar[d]^b       \\
  & C \ar[r]^g   & D
   }
  \end{equation*} is a pushout-pullback diagram. (This means $(a,f)$ is a pullback $(g,b)$, and  $(g,b)$ is a pushout $(a,f)$ at the same time)
\end{Lemma}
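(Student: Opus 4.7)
The plan is to handle each of the two diagrams separately: in each case I will first establish one of the two universal properties directly, by constructing the relevant limit or colimit and comparing exact sequences via the five lemma, and then upgrade the conclusion to a pushout-pullback by invoking Lemma~\ref{pushout1}.

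For the first diagram, exactness of the rows says that $f=\Coker(E\to A)$ and $g=\Coker(E\to C)$, so both $f$ and $g$ are epimorphisms. I would form the pullback $(P,p_1,p_2)$ of $b$ along $g$. Since the pullback of an epimorphism along any morphism is an epimorphism in an abelian category, $p_1\colon P\to B$ is surjective with $\Ker p_1\cong \Ker g=E$, so $P$ fits into a short exact sequence $0\to E\to P\to B\to 0$. The universal property of the pullback produces a canonical map $A\to P$ compatible with the identities on $E$ and on $B$, and the five lemma forces $A\cong P$. Thus the square is a pullback. Finally, $f$ and $g$ being epimorphisms makes $\widetilde b\colon \Coker f\to\Coker g$ the map $0\to 0$, trivially an isomorphism, so Lemma~\ref{pushout1}(1)(iii) upgrades the square to a pushout as well.

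For the second diagram, dually $f$ and $g$ are monomorphisms (kernels of the surjections onto $E$). I would form the pushout $(Q,\iota_1,\iota_2)$ of $f$ and $a$. Since pushouts of monomorphisms in an abelian category are monomorphisms, $\iota_2\colon C\to Q$ is injective with $\Coker\iota_2\cong\Coker f=E$, yielding an exact sequence $0\to C\to Q\to E\to 0$. The universal property gives a comparison map $Q\to D$, and the five lemma against the bottom row $0\to C\to D\to E\to 0$ shows it is an isomorphism, so the square is a pushout. Because $f$ and $g$ are monic, $\widetilde a\colon\Ker f\to\Ker g$ is $0\to 0$ and hence an isomorphism, and Lemma~\ref{pushout1}(2)(iii) upgrades the square to a pullback as well.

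The only point needing care is the preservation principle used twice above: pullbacks preserve epimorphisms and pushouts preserve monomorphisms in any abelian category. Both statements follow by a direct application of the snake lemma to the defining sequence of the pullback or pushout, and once they are in hand the argument reduces to two essentially identical five-lemma chases, so no serious obstacle arises.
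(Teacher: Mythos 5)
Your argument is correct, but note that the paper does not actually prove Lemma~\ref{pushout2} at all: it is quoted from \cite[Proposition~2.12]{Bu}, where the standard route is to show that the square is bicartesian if and only if the sequence $0\to A\xrightarrow{\left(\begin{smallmatrix}f\\-a\end{smallmatrix}\right)} B\oplus C\xrightarrow{(b,\ g)} D\to 0$ is exact, and then to verify this exactness from either of the two given diagrams. Your proof takes a genuinely different, more hands-on route: you construct the relevant pullback (resp.\ pushout) abstractly, identify it with $A$ (resp.\ $D$) by a five-lemma comparison of short exact sequences, and then obtain the second universal property for free from Lemma~\ref{pushout1}(1)(iii) (resp.\ (2)(iii)), since $\Coker f=\Coker g=0$ in the first case and $\Ker f=\Ker g=0$ in the second. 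This is perfectly valid and arguably better suited to this paper, since it reuses Lemma~\ref{pushout1} rather than importing the $B\oplus C$ criterion; the Bühler argument is shorter once that criterion is available and treats both diagrams symmetrically in one stroke. The only step you should spell out is the commutativity of the left (resp.\ right) square in your comparison diagram before invoking the five lemma: you must check that the induced endomorphism of $E$ is the identity, which follows because the composite $E\to A\xrightarrow{u}P\xrightarrow{p_2}C$ equals the kernel inclusion $E\to C$ and the isomorphism $\Ker p_1\cong\Ker g$ of Lemma~\ref{pushout1}(1)(i) is implemented by $p_2$. Your two auxiliary facts (pullbacks preserve epimorphisms, pushouts preserve monomorphisms) are indeed standard and follow from Lemma~\ref{pushout1}(1)(ii) and (2)(ii) applied to the zero cokernel, so no gap remains.
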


\section{The properties of  $\mathcal{E}(\mathcal{X},M,\mathcal{Y})$ \label{3}}

In this section, $A$ and $B$ are rings and $M$ is an $(A,B)$-bimodule. Let $\mathcal{X}$ and $\mathcal{Y}$ be subcategories of $A$-Mod and $B$-Mod respectively closed under isomorphisms and containing the zero module. We will study properties of  $\mathcal{E}(\mathcal{X},M,\mathcal{Y})$ (see Definition~\ref{definition1} for what it means). First of all, we need to define some useful functors.

(1) $\mathbf{p}: \mathcal{X}\rightarrow \mathcal{E}(\mathcal{X},M,\mathcal{Y})$,\qquad   $X\longmapsto (X,\Hom_A(M,X))_1$;
\vspace{1mm}

(2) $\mathbf{q}:\mathcal{Y}\rightarrow \mathcal{E}(A,M,\mathcal{Y})$, \qquad $Y\longmapsto (0,Y)_0$;
\vspace{1mm}

(3) $\mathbf{P}:  \mathcal{E}(\mathcal{X},M,\mathcal{Y})\rightarrow \mathcal{X}$, \qquad $(X,Y)_{\varphi}\longmapsto X$;

\vspace{1mm}

(4) $\mathbb{Q}:  \mathcal{E}(\mathcal{X},M,\mathcal{Y})\rightarrow \mathcal{Y}$,\qquad $(X,Y)_{\varphi}\longmapsto \Ker(\varphi)$;

(5) $\mathbf{Q}:  \mathcal{E}(\mathcal{X},M,\mathcal{Y})\rightarrow B\mathrm{\text{-}Mod}$,\qquad $(X,Y)_{\varphi}\longmapsto Y$.
\vspace{1mm}

The actions to morphisms   of these functors  are defined in a  natural way. For examples, if $f\in \mathrm{Mor} \mathcal{X}$, then $\mathbf{p}(f)=(f, \Hom(M,f))$; if $(f,g)\in \mathrm {Mor} \mathcal{E}(\mathcal{X},M,\mathcal{Y})$, then $\mathbf{P}(f,g)=f$, $\mathbf{Q}(f,g)=g$
and  $\mathbb{Q}(f,g)=\overline{g}$, where $\overline{g}$ is given by the following commutative diagram:
\begin{equation*} \xymatrix{
0\ar[r]&\Ker(\varphi)\ar[d]^{\overline{g}}\ar[r]&  Y \ar[d]^{g} \ar[rr]^(.35){\varphi} && \mathrm{Hom}_A(M,X)\ar[d]^{\mathrm{Hom}_A(M,f)}\ar[r]&0\\
0\ar[r]&\Ker(\varphi_1)\ar[r] & Y_1\ar[rr]^(.35){\varphi_1} && \mathrm{Hom}_A(M,X_1)\ar[r]&0.  }
\end{equation*}

\vspace{2mm}
Note that $\mathbf{Q} (X,Y)_{\varphi}$ may not belong to $\mathcal{Y}$ even if $(X,Y)_{\varphi}\in \mathcal{E}(\mathcal{X},M,\mathcal{Y})$. If the categories involved  are exact, then the functors $\mathbf{q}$, $\mathbb{Q}$, $\mathbf{P}$ and $\mathbf{Q}$ are exact. On the other hand,  $\mathbf{p}$ is faithful and left exact, but not always exact.

\begin{Proposition} \label{extension}
 If both $\mathcal{X}$ and $\mathcal{Y}$ are closed under extensions, then $\mathcal{E}(\mathcal{X},M,\mathcal{Y})$ is closed under extensions;
\end{Proposition}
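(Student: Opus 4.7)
The plan is to take a short exact sequence
$$0 \to (X_1,Y_1)_{\varphi_1} \to (X_2,Y_2)_{\varphi_2} \to (X_3,Y_3)_{\varphi_3} \to 0$$
in $\mathrm{Rep}_h(A,M,B)$ whose outer terms lie in $\mathcal{E}(\mathcal{X},M,\mathcal{Y})$, and verify that the middle term satisfies the three defining conditions: $X_2\in\mathcal{X}$, $\mathrm{Ker}(\varphi_2)\in\mathcal{Y}$, and $\varphi_2$ surjective.

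First I would use that the abelian structure on $\mathrm{Rep}_h(A,M,B)$ is componentwise, so applying the exact projection functor $\mathbf{P}$ yields a short exact sequence $0\to X_1\to X_2\to X_3\to 0$ in $A$-Mod. Since $X_1,X_3\in\mathcal{X}$ and $\mathcal{X}$ is closed under extensions, $X_2\in\mathcal{X}$. Similarly $\mathbf{Q}$ gives a short exact sequence $0\to Y_1\to Y_2\to Y_3\to 0$ in $B$-Mod.

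Next I would exploit the structural commutative diagram
$$
\xymatrix{
0\ar[r] & Y_1\ar[d]_{\varphi_1}\ar[r] & Y_2\ar[d]_{\varphi_2}\ar[r]^{q} & Y_3\ar[d]_{\varphi_3}\ar[r] & 0 \\
0\ar[r] & \mathrm{Hom}_A(M,X_1)\ar[r] & \mathrm{Hom}_A(M,X_2)\ar[r]_{\mathrm{Hom}_A(M,p)} & \mathrm{Hom}_A(M,X_3) &
}
$$
whose top row is exact and whose bottom row is left exact. The main technical point is to upgrade the bottom row to a short exact sequence, for then the snake lemma applies verbatim. For this I would chase through the commutativity relation $\mathrm{Hom}_A(M,p)\circ\varphi_2=\varphi_3\circ q$: both $q$ and $\varphi_3$ are surjective, so the composite on the right is surjective, forcing $\mathrm{Hom}_A(M,p)$ to be surjective. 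This is the step I expect to be the trickiest, since $\mathrm{Hom}_A(M,-)$ is only left exact in general, and the argument genuinely uses both surjectivities $\varphi_1$ and $\varphi_3$ together with the epi $q$.

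With both rows short exact, the snake lemma produces the six-term exact sequence connecting kernels and cokernels of the $\varphi_i$. Since $\varphi_1$ and $\varphi_3$ are surjective, $\mathrm{Coker}(\varphi_1)=\mathrm{Coker}(\varphi_3)=0$, so $\mathrm{Coker}(\varphi_2)=0$ and the remaining part collapses to a short exact sequence
$$0\to\mathrm{Ker}(\varphi_1)\to\mathrm{Ker}(\varphi_2)\to\mathrm{Ker}(\varphi_3)\to 0$$
in $B$-Mod. Since $\mathrm{Ker}(\varphi_1),\mathrm{Ker}(\varphi_3)\in\mathcal{Y}$ and $\mathcal{Y}$ is closed under extensions, $\mathrm{Ker}(\varphi_2)\in\mathcal{Y}$. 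All three defining conditions for membership in $\mathcal{E}(\mathcal{X},M,\mathcal{Y})$ have been verified for $(X_2,Y_2)_{\varphi_2}$, completing the argument.
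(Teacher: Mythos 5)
Your proof is correct and follows essentially the same route as the paper: componentwise exactness gives $X_2\in\mathcal{X}$, and then the snake lemma is applied to the commutative ladder comparing $0\to Y_1\to Y_2\to Y_3\to 0$ with $0\to\Hom_A(M,X_1)\to\Hom_A(M,X_2)\to\Hom_A(M,X_3)$ to get $\Coker(\varphi_2)=0$ and $\Ker(\varphi_2)\in\mathcal{Y}$. The only divergence is your extra step of upgrading the bottom row to a short exact sequence by noting that $\Hom_A(M,p)\circ\varphi_2=\varphi_3\circ q$ is surjective; this is valid (though it uses only the surjectivity of $\varphi_3$ and of $q$, not of $\varphi_1$), but it is unnecessary, since the snake lemma already produces the six-term exact sequence when the bottom row is merely left exact, which is how the paper proceeds.
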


\begin{proof}   Let $$0\rightarrow (X_1,Y_1)_{\varphi_1}\xrightarrow{(f_1,g_1)}(X_2,Y_2)_{\varphi_2}\xrightarrow{(f_2,g_2)}(X_3,Y_3)_{\varphi_3}\rightarrow 0$$ be a short exact sequence in $\mathrm{Rep}_h(A,M,B)$ such that  $(X_i,Y_i)_{\varphi_i}\in \mathcal{E}(\mathcal{X},M,\mathcal{Y})$ for $i=1,2$. Since the sequence $0\rightarrow X_1\xrightarrow{f_1}X_2\xrightarrow{f_2}X_3\rightarrow 0$ is exact, it follows that $X_2\in \mathcal{X}$.
In addition,  we have the following commutative diagram:
\begin{equation*}\xymatrix{
      0\ar[r]   & Y_1 \ar[rr]^{g_1} \ar[d]^{\varphi_1} & &Y_2 \ar[d]_{\varphi_2} \ar[rr]^{g_2} && Y_3 \ar[d]^{\varphi_3}\ar[r]&0 \\
        0\ar[r]&  \Hom_A(M,X_1) \ar[rr]^{\Hom_A(M,f_1)}& &\Hom_A(M,X_2)\ar[rr]^{\Hom_A(M,f_2)} &&\Hom_A(M,X_3) ,}
\end{equation*}
 where both rows are exact.  By the Snake Lemma  we obtain the exact sequence: $$0\rightarrow \Ker(\varphi_1)\rightarrow\Ker(\varphi_2)\rightarrow\Ker(\varphi_3)\rightarrow\Coker(\varphi_1)\rightarrow\Coker(\varphi_2)\rightarrow\Coker(\varphi_3). $$ Note that $\Coker(\varphi_1)=\Coker(\varphi_3)=0$ and $\Ker(\varphi_i)\in \mathcal{X}$ for $i=1,2$, it follows that $\Coker(\varphi_2)=0$ and $\Ker(\varphi_2)\in \mathcal{X}.$
\end{proof}

   If $\mathcal{E}(\mathcal{X},M,\mathcal{Y})$ is closed under extensions, then so is $\mathcal{Y}$, since $\mathbf{q}$ is an exact functor; but  $\mathcal{X}$ may be not closed under extensions in general.   If $\mathbf{p}$ is  an exact functor, then  $\mathcal{X}$ is also closed under extensions.
\vspace{3mm}

Next, we aim to classify  all projective objects of  $\mathcal{E}(\mathcal{X},M,\mathcal{Y})$ when it is an exact category.  We begin with a lemma,  which  deals with this question under the hypothesis as weak as possible.
 To this end we introduce  condition (*).

(*)  {\it  Any short exact sequence $0\rightarrow X_1\xrightarrow{f_1} X_2\xrightarrow{f_2} X_3\rightarrow 0$ in $\mathcal{X}$ induces a short exact sequence $0\rightarrow (X_1,Y_1)_{\varphi_1}\xrightarrow{(f_1,g_1)} (X_2,Y_2)_{\varphi_2}\xrightarrow{(f_2,g_2)} (X_3,Y_3)_{\varphi_3}\rightarrow 0$ in $\mathcal{E}(\mathcal{X},M,\mathcal{Y}).$}

 In the case when $\mathbf{p}$ is exact, this condition holds automatically.

\begin{Lemma} \label{projective}  Suppose that both $\mathcal{X}$ and $\mathcal{Y}$ are closed under extensions and let $(P,Q)_{\varphi}\in \mathcal{E}(\mathcal{X},M,\mathcal{Y})$.  If the following conditions holds:

$\mathrm{(i)}$ $P$ is a  projective object in $\mathcal{X}$;

$\mathrm{(ii)}$ $Q$ is $\mathcal{Y}$-projective, (this means $\Hom_A(Q, )$ takes a short exact sequence in $\mathcal{Y}$ to a short exact sequence in $B$-Mod,  but $Y$ may not belong to $\mathcal{Y}$);

$\mathrm{(iii)}$ the map $\varphi$ satisfies the following property: given  any short exact sequence in $B$-Mod: $$0\rightarrow \Ker(\phi)\rightarrow Y\xrightarrow{\phi} \Hom_A(M,X)\rightarrow 0$$ with $\Ker(\phi)\in \mathcal{Y}$ and $X\in \mathcal{X}$,  and any $A$-homomorphism $f:P\rightarrow X$, there exists a $B$-homomorphism: $g:Q\rightarrow Y$ such that $\Hom(M,f)\circ\varphi=\phi\circ g$, that is, the following diagram is commutative:
\begin{equation*}\xymatrix{
&&&Q\ar[d]^{\varphi}\ar@{-->}[ddl]_g\\
&&&\Hom_A(M,P)\ar[d]^{\Hom(M,f)}\\
0\ar[r]& \Ker(\phi)\ar[r]& Y\ar[r]^(.35){\phi}&\Hom_A(M,X)\ar[r]&0,}
\end{equation*}
then  $(P,Q)_{\varphi}$ is a projective object of $\mathcal{E}(\mathcal{X},M,\mathcal{Y})$.

 The converse is also true if  condition (*) holds.
\end{Lemma}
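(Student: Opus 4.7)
The plan is to treat the two directions separately. For the forward direction, given a short exact sequence $0\to (X_1,Y_1)_{\varphi_1}\to (X_2,Y_2)_{\varphi_2}\xrightarrow{(f_2,g_2)} (X_3,Y_3)_{\varphi_3}\to 0$ in $\mathcal{E}(\mathcal{X},M,\mathcal{Y})$ and a morphism $(f,g):(P,Q)_\varphi\to(X_3,Y_3)_{\varphi_3}$, I will build a lift in three steps: lift the $A$-component using (i), produce a preliminary $B$-component using (iii), and correct the latter by a map supplied by (ii). For the converse, I will exploit the functors $\mathbf{p},\mathbf{q}$ to manufacture natural short exact sequences in $\mathcal{E}(\mathcal{X},M,\mathcal{Y})$ against which to test projectivity.

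Concretely, for the forward implication, exactness of $\mathbf{P}$, $\mathbf{Q}$ and $\mathbb{Q}$ ensures the $A$-, $B$- and kernel-components are all exact, and the kernel sequence $0\to\Ker\varphi_1\to\Ker\varphi_2\to\Ker\varphi_3\to 0$ lives in $\mathcal{Y}$. Choose $f':P\to X_2$ lifting $f$ via (i), and, applied to the epimorphism $\varphi_2:Y_2\twoheadrightarrow\Hom_A(M,X_2)$ and $f'$, use (iii) to obtain $g':Q\to Y_2$ with $\varphi_2 g'=\Hom_A(M,f')\circ\varphi$. A direct computation gives $\varphi_3(g-g_2g')=0$, so $g-g_2g'$ factors through $\Ker\varphi_3\hookrightarrow Y_3$ via some $\eta:Q\to\Ker\varphi_3$. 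Condition (ii) then lifts $\eta$ along the epimorphism $\Ker\varphi_2\twoheadrightarrow\Ker\varphi_3$ to $\alpha:Q\to\Ker\varphi_2$, and setting $g'':=g'+\iota\alpha$ with $\iota:\Ker\varphi_2\hookrightarrow Y_2$ produces the desired morphism $(f',g''):(P,Q)_\varphi\to(X_2,Y_2)_{\varphi_2}$ lifting $(f,g)$.

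For the converse, assume $(P,Q)_\varphi$ is projective and that (*) holds. Condition (iii) is obtained by observing that for any surjection $\phi:Y\twoheadrightarrow\Hom_A(M,X)$ with $\Ker\phi\in\mathcal{Y}$ and $X\in\mathcal{X}$, the object $(X,Y)_\phi$ lies in $\mathcal{E}(\mathcal{X},M,\mathcal{Y})$ and fits into the short exact sequence
\[
0\to\mathbf{q}(\Ker\phi)\to(X,Y)_\phi\xrightarrow{(1_X,\phi)}\mathbf{p}(X)\to 0
\]
in $\mathcal{E}(\mathcal{X},M,\mathcal{Y})$; a lift of $(f,\Hom_A(M,f)\varphi):(P,Q)_\varphi\to\mathbf{p}(X)$ supplied by projectivity has second component the desired $g$. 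Condition (ii) is obtained by applying $\mathbf{q}$ to a short exact sequence in $\mathcal{Y}$ and lifting $(0,g):(P,Q)_\varphi\to\mathbf{q}(Y_3)$ via projectivity, noting that any lift is forced to have zero $A$-component. Condition (i) then follows from (*): lift a short exact sequence in $\mathcal{X}$ to one in $\mathcal{E}(\mathcal{X},M,\mathcal{Y})$, use (iii) to promote any $f:P\to X_3$ to a morphism in $\mathcal{E}(\mathcal{X},M,\mathcal{Y})$, and apply projectivity.

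The principal obstacle is the final step of the forward direction: condition (iii) alone yields $g'$ compatible with $\varphi$ but not with $g_2$, and the key observation is that the discrepancy $g-g_2g'$ automatically factors through $\Ker\varphi_3\in\mathcal{Y}$, at which point the exactness of the kernel sequence (via the same snake-lemma argument used in the proof of Proposition~\ref{extension}) combined with (ii) supplies the correction. The routine bookkeeping—checking that $(f',g'')$ is a genuine morphism and that all auxiliary objects really lie in $\mathcal{E}(\mathcal{X},M,\mathcal{Y})$—is straightforward given the extension-closedness of $\mathcal{X}$ and $\mathcal{Y}$.
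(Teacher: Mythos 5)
Your proposal is correct and follows essentially the same route as the paper's proof: the same three-step lift (project $P$ via (i), apply (iii) to $\varphi_2:Y_2\twoheadrightarrow\Hom_A(M,X_2)$, correct the discrepancy through the kernel sequence using (ii)), and the same test objects $\mathbf{q}(\Ker\phi)\to(X,Y)_\phi\to\mathbf{p}(X)$ and $\mathbf{q}$ of a sequence in $\mathcal{Y}$ for the converse. No substantive differences.
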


\begin{proof} First we assume that $(P,Q)_{\varphi}$ satisfies the  three conditions mentioned above, and we proceed to prove $(P,Q)_{\varphi}$ is a projective object.  For this, we let  $$0\rightarrow (X_1,Y_1)_{\varphi_1}\xrightarrow{(f_1,g_1)}(X_2,Y_2)_{\varphi_2}\xrightarrow{(f_2,g_2)}(X_3,Y_3)_{\varphi_3}\rightarrow 0$$
be a short exact sequence in $\mathcal{E}(\mathcal{X},M,\mathcal{Y})$ and $(\alpha,\beta): (P,Q)_{\varphi}\rightarrow (X_3,Y_3)_{\varphi_3}$ a morphism in $\mathrm{Rep}_h(A,M,B)$.  Since $P$ is a projective object of $\mathcal{X}$, there is an $A$-morphism $\rho: P\rightarrow X_2$ such that $\alpha=f_2\rho$.  Denote $\Hom(M,h)$ by $h^*$ for any $A$-homomorphism $h$.  By (iii), there exists a $B$-homomorphism $t$ such that $\rho^*\varphi=\varphi_2t$.  Considering  the following commutative diagram, where all rows and columns are exact.

\begin{equation*}
\xymatrix{& 0 \ar[d]&0\ar[d]&&0\ar[d]\\
0\ar[r]& \Ker(\varphi_1) \ar[dd]\ar[r] & \Ker(\varphi_2)  \ar[dd]^{i_2} \ar[rr]^{\overline{g_2}}&& \Ker(\varphi_3) \ar[dd]^{i_3}\ar[r] &0 \\
 & &&Q\ar[dr]^{\beta}\ar@{-->}[dl]^t\ar[dd]^(.65){\varphi}\ar@{-->}[ur]^{j}\ar@{-->}[ul]^{k} &\\
0\ar[r] & Y_1\ar[dd]^{\varphi_1}\ar[r]^{g_1}&Y_2\ar[dd]^(.5){\varphi_2}\ar[rr]^(.37){g_2}&&Y_3\ar[dd]^{\varphi_3}\ar[r]&0\\
 & && \Hom_A(M,P)\ar[dr]^{\alpha^*}\ar@{-->}[dl]^{\rho^*}&\\
   0\ar[r]&  \Hom_A(M,X_1) \ar[d]\ar[r]_{f_1^*} &\Hom_A(M, X_2)\ar[rr]_{f_2^*}\ar[d]&& \Hom_A(M,X_3)\ar[d]\\
   &0&0&&0.}
\end{equation*}
\noindent The exactness of the upmost row follows from the Snake Lemma.
 Note that $$\varphi_3\beta=\alpha^*\varphi=f_2^*\rho^*\varphi=f_2^*\varphi_2t=\varphi_3g_2t,$$ it follows that $\varphi_3(\beta-g_2t)=0,$ and so there exists a $B$-homomorphism $j:Q\rightarrow \Ker(\varphi)$ such that $\beta-g_2t=i_3j$ by the universal property of the Kernel. By (ii), we have $j=g_2k$ for some $B$-homomorphism $k:Q\rightarrow \Ker(\varphi_2)$.

Now, set $s=t+i_2k$. Then $$\varphi_2s=\varphi_2t+\varphi_2i_2k=\varphi_2t=\rho^*\varphi$$
\qquad \qquad \qquad\qquad and \begin{equation*}\begin{aligned}g_2s&=g_2t+g_2i_2k=g_2t+i_3\overline{g_2}k\\
&=g_2t+i_3j=g_2t+\beta-g_2t\\
&=\beta.
\end{aligned}
\end{equation*}
Hence $(\rho, s): (P,Q)_{\varphi}\rightarrow (X_2,Y_2)_{\varphi_2}$ is a morphism in $\mathrm{Rep}_h(A,M,B)$ and it lifts the morphism $(\alpha,\beta)$, that is, $(\alpha,\beta)=(f_2,g_2)(\rho, s)$. Consequently, $(P,Q)_{\varphi}$ is a projective object of  $\mathcal{E}(\mathcal{X},M,\mathcal{Y})$, as desired.

Conversely,  assume that  $(P,Q)_{\varphi}$ is a projective object in $\mathcal{E}(\mathcal{X},M,\mathcal{Y})$. First, we prove that (iii) holds.  Let $$0\rightarrow \Ker(\phi)\rightarrow Y\xrightarrow{\phi} \Hom_A(M,X)\rightarrow 0$$ be a short exact sequence  in $B$-Mod with $\Ker(\phi)\in \mathcal{Y}$ and $X\in \mathcal{X}$,  and $f:P\rightarrow X$ an  $A$-homomorphism. Then we have the following exact sequences in $\mathcal{E}(\mathcal{X},M,\mathcal{Y})$:$$0\rightarrow (0,\Ker(\phi))_0\rightarrow (X,Y)_{\phi}\xrightarrow{(1,\phi)}(X, \Hom_A(M,X))_1\rightarrow 0.$$
  For the morphism $(f, \Hom_A(M,f)\varphi): (P,Q)_{\varphi}\rightarrow (X,\Hom_A(M,X))_1$,  there exists a morphism, say   $(j,k):(P,Q)_{\varphi}\rightarrow (Y,\Hom_A(M,X))_{\phi}$ such that $(f, \Hom(M,f)\varphi)=(1,\phi)(j,g)$. Particularly, we have $\phi g=\Hom_A(M,f)\varphi$. This proves (iii).

 Next, since any short sequence in $B$-Mod: $0\rightarrow Y_1\rightarrow Y_2\rightarrow Y_3\rightarrow 0$ induces a short sequence $0\rightarrow (0,Y_1)_0\rightarrow (0,Y_2)_0\rightarrow (0,Y_3)_0\rightarrow 0$,    condition  (ii) follows.

 Finally, we prove (i) under the additional condition (*). Let  $0\rightarrow X_1\xrightarrow{f_1} X_2\xrightarrow{f_2} X_3\rightarrow 0$  be a short exact sequence in $\mathcal{X}$ and let $\alpha: P\rightarrow X_3$ be a $A$-homomorphism. Then,  by  condition (*), there exists a short exact sequence of the form: $$0\rightarrow (X_1,Y_1)_{\varphi_1}\xrightarrow{(f_1,g_1)} (X_2,Y_2)_{\varphi_2}\xrightarrow{(f_2,g_2)} (X_3,Y_3)_{\varphi_3}\rightarrow 0$$ in $\mathcal{E}(\mathcal{X},M,\mathcal{Y}).$ Using (iii), which have been proved before, there exists a morphism $(\alpha,\beta):(P,Q)_{\varphi}\rightarrow (X_3,Y_3)_{\varphi_3}$ in $\mathcal{E}(\mathcal{X},M,\mathcal{Y})$. It follows that  there exists a morphism $(\alpha',\beta'):(P,Q)_{\varphi}\rightarrow (X_2,Y_2)_{\varphi_2}$ such that $(\alpha,\beta)=(\alpha',\beta')(f_2,g_2)$ and this particularly implies $\alpha=\alpha'f_2$. Hence $P$ is a projective object of $\mathcal{X}$,  proving (i).
\end{proof}

Hereafter we need the additional hypothesises that $\Hom_A(M,X)\in \mathcal{Y}$ for $X\in \mathcal{X}$ and $\mathrm{Ext}^1_A(M,X)=0$  for any $X\in \mathcal{X}$. For short, we will  denote them by $\Hom_A(M,\mathcal{X})\subseteq \mathcal{Y}$ and $\mathrm{Ext}^1_A(M,\mathcal{X})=0$  respectively.
  Lemma~\ref{projective} can be simplified a lot under these hypothesises.

\begin{Corollary} \label{projective2} Suppose that $\mathrm{Ext}^1_A(M,\mathcal{X})=0$ and that $\mathcal{E}(\mathcal{X},M,\mathcal{Y})$ is closed under extensions. If either $\mathcal{Y}$ has enough injective objects or $\Hom_A(M,\mathcal{X})\subseteq \mathcal{Y}$, then for any $(P,Q)_{\varphi}\in \mathcal{E}(\mathcal{X},M,\mathcal{Y})$,  $(P,Q)_{\varphi}$ is a projective object of $\mathcal{E}(\mathcal{X},M,\mathcal{Y})$ if and only if
$P$ is a projective object of $\mathcal{X}$ and $Q$ is  $\mathcal{Y}$-projective.
\end{Corollary}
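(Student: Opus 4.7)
The plan is to reduce Corollary~\ref{projective2} to Lemma~\ref{projective} by (a) verifying that condition~(*) there holds automatically and (b) checking that condition~(iii) of that lemma follows from its (i) and (ii) under the corollary's hypotheses.

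For (a), the assumption $\mathrm{Ext}^1_A(M,\mathcal{X})=0$ makes $\mathbf{p}$ an exact functor: applying $\Hom_A(M,-)$ to any short exact sequence $0\to X_1\to X_2\to X_3\to 0$ in $\mathcal{X}$ stays exact, while each $\mathbf{p}(X_i)=(X_i,\Hom_A(M,X_i))_1$ lies in $\mathcal{E}(\mathcal{X},M,\mathcal{Y})$ since $\Ker(\mathrm{id})=0\in\mathcal{Y}$. So $\mathbf{p}$ lifts any such sequence to a short exact sequence in $\mathcal{E}(\mathcal{X},M,\mathcal{Y})$, establishing~(*). The ``only if'' direction of the corollary is then immediate from the converse part of Lemma~\ref{projective}.

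For the ``if'' direction, assume (i) and (ii) and verify~(iii). Let $0\to\Ker(\phi)\to Y\xrightarrow{\phi}\Hom_A(M,X)\to 0$ be the short exact sequence in~(iii) and $f\colon P\to X$ the given morphism. If $\Hom_A(M,\mathcal{X})\subseteq\mathcal{Y}$, then $\mathcal{Y}$ inherits closure under extensions from $\mathcal{E}(\mathcal{X},M,\mathcal{Y})$ via the exact functor $\mathbf{q}$, so both $\Ker(\phi)$ and $\Hom_A(M,X)$ lie in $\mathcal{Y}$, forcing $Y\in\mathcal{Y}$ as well. The whole sequence is then inside $\mathcal{Y}$, and the $\mathcal{Y}$-projectivity of $Q$ produces the required lift $g$ of $\Hom_A(M,f)\circ\varphi$.

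If instead $\mathcal{Y}$ has enough injective objects, I would embed $\Ker(\phi)$ into an injective $J$ of $\mathcal{Y}$ (so that the cokernel $J/\Ker(\phi)$ also lies in $\mathcal{Y}$) and take the pushout along $\Ker(\phi)\hookrightarrow Y$, producing a commutative diagram with exact rows
\begin{equation*}
\xymatrix{
0\ar[r] & \Ker(\phi)\ar[r]\ar[d] & Y\ar[r]\ar[d]^{u} & \Hom_A(M,X)\ar[r]\ar@{=}[d] & 0 \\
0\ar[r] & J\ar[r] & Y''\ar[r]^{\phi''} & \Hom_A(M,X)\ar[r] & 0,
}
\end{equation*}
in which $u$ is a monomorphism with $\Coker(u)\cong J/\Ker(\phi)\in\mathcal{Y}$ by Lemma~\ref{pushout1}. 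The injectivity of $J$ inside $\mathcal{Y}$, together with the $\mathcal{Y}$-projectivity of $Q$, would then yield a lift of $\Hom_A(M,f)\circ\varphi$ into $Y''$ which descends along $u$ to the desired $g\colon Q\to Y$. The main obstacle is precisely this second case: $Y$ itself need not lie in $\mathcal{Y}$ when $\Hom_A(M,X)\notin\mathcal{Y}$, so the $\mathcal{Y}$-projectivity of $Q$ cannot be applied directly to the original sequence; one must first enlarge the kernel to an injective of $\mathcal{Y}$ and then carefully descend along the pushout mono, exploiting the pushout--pullback machinery of Lemmas~\ref{pushout1} and~\ref{pushout2}.
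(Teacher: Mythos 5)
Your reduction to Lemma~\ref{projective} follows the paper's own route: the hypothesis $\mathrm{Ext}^1_A(M,\mathcal{X})=0$ makes $\mathbf{p}$ exact, hence condition (*) holds automatically and the ``only if'' direction is immediate from the converse part of that lemma; and in the case $\Hom_A(M,\mathcal{X})\subseteq\mathcal{Y}$ your verification of condition (iii) (the sequence $0\to\Ker(\phi)\to Y\xrightarrow{\phi}\Hom_A(M,X)\to 0$ lies entirely in $\mathcal{Y}$, since $\mathcal{Y}$ is closed under extensions, so the $\mathcal{Y}$-projectivity of $Q$ applies directly) is exactly the paper's argument.

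The gap is in the remaining case, where only ``$\mathcal{Y}$ has enough injective objects'' is assumed; your pushout construction does not close the argument, and you in effect concede this. Concretely: (a) the bottom row $0\to J\to Y''\xrightarrow{\phi''}\Hom_A(M,X)\to 0$ of your diagram is \emph{not} a short exact sequence in $\mathcal{Y}$, because in this case $\Hom_A(M,X)$, and hence $Y''$, need not belong to $\mathcal{Y}$; so neither the injectivity of $J$ as an object of the exact category $\mathcal{Y}$ (which only splits admissible monomorphisms \emph{inside} $\mathcal{Y}$) nor the $\mathcal{Y}$-projectivity of $Q$ can be invoked to lift $\Hom_A(M,f)\circ\varphi$ through $\phi''$; (b) even granted such a lift $g''\colon Q\to Y''$, descending it along the monomorphism $u$ requires the composite $Q\to Y''\to\Coker(u)\cong J/\Ker(\phi)$ to vanish, which you do not address and which fails for a general choice of $g''$. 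The paper closes this case by a different mechanism: apply $\Hom_B(Q,-)$ to the original sequence $0\to\Ker(\phi)\to Y\to\Hom_A(M,X)\to 0$; the obstruction to the required lift is $\Ext^1_B(Q,\Ker(\phi))$, and the paper argues via the long exact sequence that $\Ext^1_B(Q,-)$ vanishes on all of $\mathcal{Y}$ when $Q$ is $\mathcal{Y}$-projective and $\mathcal{Y}$ has enough injectives, so the obstruction dies because $\Ker(\phi)\in\mathcal{Y}$. That argument never needs $Y$ or $\Hom_A(M,X)$ to lie in $\mathcal{Y}$, which is precisely the ingredient your approach lacks; to repair your proof you should replace the pushout-and-descend step by this $\Ext^1$-vanishing argument.
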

\begin{proof} Since $\mathrm{Ext}^1_A(M,\mathcal{X})=0$,  the functor $\mathbf{p}$ is exact, and particularly,   condition (*) holds. Thus,  if $(P,Q)_{\varphi}$ is a projective object of $\mathcal{E}(\mathcal{X},M,\mathcal{Y})$, then
$P$ is a projective object of $\mathcal{X}$ and $Q$ is  $\mathcal{Y}$-projective by Lemma~\ref{projective}. Conversely, we assume that  $P$ is a projective object of $\mathcal{X}$ and $Q$ is  $\mathcal{Y}$-projective. If $\mathcal{Y}$ has enough injective objects, then  $\Ext^1_B(Q, Y)=0$ for $Y\in \mathcal{Y}$ by the Long Exact Sequence Theorem, and so  condition (iii) in Lemma~\ref{projective} is satisfied (by noting that  $\Ext^1_B(Q, \Ker(\phi))=0$); If $\Hom_A(M,\mathcal{X})\subseteq \mathcal{Y}$, then the short exact sequence $0\rightarrow \Ker(\phi)\rightarrow Y\xrightarrow{\phi} \Hom_A(M,X)\rightarrow 0$ in  condition (iii) lies in $\mathcal{Y}$ and thus  condition (iii) is also satisfied.  Consequently, $(P,Q)_{\varphi}$ is a projective object of $\mathcal{E}(\mathcal{X},M,\mathcal{Y})$ by Lemma~\ref{projective}.
\end{proof}

We now consider when $\mathcal{E}(\mathcal{X},M,\mathcal{Y})$ has enough projective objects.

\begin{Theorem} \label{enough}  Suppose that $\mathcal{E}(\mathcal{X},M,\mathcal{Y})$ is closed under extensions and assume further that $\mathrm{Ext}^1_A(M,\mathcal{X})=0$  and   $\Hom_A(M,\mathcal{X})\subseteq \mathcal{Y}$. Then $\mathcal{E}(\mathcal{X},M,\mathcal{Y})$ has enough projective objects if and only if both $\mathcal{X}$ and $\mathcal{Y}$ have enough projective objects.
\end{Theorem}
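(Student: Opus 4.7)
The plan is to leverage Corollary~\ref{projective2}, which under the present hypotheses characterizes $(P,Q)_\psi$ as projective in $\mathcal{E}(\mathcal{X},M,\mathcal{Y})$ iff $P$ is projective in $\mathcal{X}$ and $Q$ is $\mathcal{Y}$-projective. Since $\mathrm{Ext}^1_A(M,\mathcal{X})=0$ makes $\mathbf{p}$ exact, both $\mathcal{X}$ and $\mathcal{Y}$ inherit closure under extensions from $\mathcal{E}(\mathcal{X},M,\mathcal{Y})$.

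For the forward direction, given $X\in\mathcal{X}$, a projective cover $0\to K\to (P,Q)_\psi\to\mathbf{p}(X)\to 0$ in $\mathcal{E}(\mathcal{X},M,\mathcal{Y})$ yields, via the exact functor $\mathbf{P}$, an exact sequence $0\to\mathbf{P}(K)\to P\to X\to 0$ in $\mathcal{X}$ with $P$ projective in $\mathcal{X}$. Given $Y\in\mathcal{Y}$, a projective cover $0\to K\to (P,Q)_\psi\to\mathbf{q}(Y)\to 0$ yields, via the exact functor $\mathbf{Q}$, an exact sequence $0\to\mathbf{Q}(K)\to Q\to Y\to 0$. The crucial observation is that $Q\in\mathcal{Y}$: it fits in $0\to\Ker(\psi)\to Q\to\Hom_A(M,P)\to 0$ whose ends are in $\mathcal{Y}$ (using $\Hom_A(M,\mathcal{X})\subseteq\mathcal{Y}$), so closure under extensions forces $Q\in\mathcal{Y}$; an analogous argument gives $\mathbf{Q}(K)\in\mathcal{Y}$. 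Since $Q$ is both $\mathcal{Y}$-projective and in $\mathcal{Y}$, it is projective in $\mathcal{Y}$, witnessing enough projectives there.

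For the converse, given $(X,Y)_\varphi$, take a projective cover $0\to L\to P\xrightarrow{f}X\to 0$ in $\mathcal{X}$; since $L\in\mathcal{X}$ and $\mathrm{Ext}^1_A(M,L)=0$, applying $\Hom_A(M,-)$ produces an exact sequence $0\to\Hom_A(M,L)\xrightarrow{\iota}\Hom_A(M,P)\xrightarrow{\Hom_A(M,f)}\Hom_A(M,X)\to 0$, entirely in $\mathcal{Y}$. Also $Y\in\mathcal{Y}$ by closure under extensions applied to $0\to\Ker(\varphi)\to Y\to\Hom_A(M,X)\to 0$. Let $\pi_Y:P_Y\twoheadrightarrow Y$ and $\pi_L:P_L\twoheadrightarrow\Hom_A(M,L)$ be projective covers in $\mathcal{Y}$, and lift $\varphi\pi_Y$ through $\Hom_A(M,f)$ to some $\tilde\varphi:P_Y\to\Hom_A(M,P)$ using projectivity of $P_Y$ (applicable because the displayed sequence lies in $\mathcal{Y}$). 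Set $Q:=P_Y\oplus P_L$, $\psi(p,p'):=\tilde\varphi(p)+\iota\pi_L(p')$, and $g(p,p'):=\pi_Y(p)$, giving a candidate morphism $(f,g):(P,Q)_\psi\to(X,Y)_\varphi$.

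The main obstacle is to verify that $(P,Q)_\psi$ lies in $\mathcal{E}(\mathcal{X},M,\mathcal{Y})$, is projective there, and that $(f,g)$ is an epimorphism. Surjectivity of $\psi$ follows because its image contains $\iota(\Hom_A(M,L))$ via the $P_L$-summand and maps onto $\Hom_A(M,X)$ via $\tilde\varphi$, hence equals $\Hom_A(M,P)$ by the exact sequence above. For $\Ker(\psi)\in\mathcal{Y}$, a direct computation yields exact sequences $0\to\Ker(\pi_L)\to\Ker(\psi)\to\Ker(\varphi\pi_Y)\to 0$ and $0\to\Ker(\pi_Y)\to\Ker(\varphi\pi_Y)\to\Ker(\varphi)\to 0$; all four end terms lie in $\mathcal{Y}$ (as kernels of projective covers or by definition of $\mathcal{E}(\mathcal{X},M,\mathcal{Y})$), so iterated closure under extensions places $\Ker(\psi)$ in $\mathcal{Y}$. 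Projectivity of $(P,Q)_\psi$ then follows from Corollary~\ref{projective2}, since $Q$ is a direct sum of projectives in $\mathcal{Y}$ and hence $\mathcal{Y}$-projective; componentwise surjectivity of $(f,g)$ is clear by construction, and the defining identity $\Hom_A(M,f)\circ\tilde\varphi=\varphi\pi_Y$ guarantees commutativity in $\mathrm{Rep}_h(A,M,B)$.
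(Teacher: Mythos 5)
Your construction for the ``if'' direction is a genuinely different route from the paper's: the paper forms the pullback $T$ of $Y\to\Hom_A(M,X)\gets\Hom_A(M,P)$ and then takes a single projective cover of $T$ in $\mathcal{Y}$, whereas you build the cover directly as a Horseshoe--Lemma--style direct sum $Q=P_Y\oplus P_L$ with an explicit structure map. The steps you do carry out are correct: the lifting of $\varphi\pi_Y$ is legitimate because $0\to\Hom_A(M,L)\to\Hom_A(M,P)\to\Hom_A(M,X)\to 0$ lies entirely in $\mathcal{Y}$, the two exact sequences you use to place $\Ker(\psi)$ in $\mathcal{Y}$ check out, and your treatment of the ``only if'' direction (in particular the observation that $Q\in\mathcal{Y}$, so that $\mathcal{Y}$-projectivity upgrades to projectivity in $\mathcal{Y}$) is more careful than the paper's one-line appeal to exactness of $\mathbf{P}$ and $\mathbf{Q}$.

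There is, however, one missing verification, and it is forced by the definition of ``enough projectives'' in an exact category: the short exact sequence $0\to K\to (P,Q)_{\psi}\xrightarrow{(f,g)}(X,Y)_{\varphi}\to 0$ must lie \emph{in} $\mathcal{E}(\mathcal{X},M,\mathcal{Y})$, i.e.\ the kernel $K=(L,\Ker(g))_{\chi}$ must itself be an object of $\mathcal{E}(\mathcal{X},M,\mathcal{Y})$. Componentwise surjectivity of $(f,g)$ together with projectivity of $(P,Q)_{\psi}$ does not give this. Concretely, $\Ker(g)=\Ker(\pi_Y)\oplus P_L$ and $\chi$ is the restriction of $\psi$, which lands in $\Ker(\Hom_A(M,f))\cong\Hom_A(M,L)$; you must check that $\chi$ is surjective onto $\Hom_A(M,L)$ and that $\Ker(\chi)\in\mathcal{Y}$. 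Note $\Ker(\chi)=\Ker(\psi)\cap\Ker(g)$ is in general strictly smaller than $\Ker(\psi)$ (an element of $\Ker(\psi)$ only has $\pi_Y(p)\in\Ker(\varphi)$, not $\pi_Y(p)=0$), so your computation of $\Ker(\psi)$ does not settle it. The gap is fillable by exactly your own technique: the image of $\chi$ already contains $\iota\pi_L(P_L)=\Hom_A(M,L)$, and one gets an exact sequence $0\to\Ker(\pi_L)\to\Ker(\chi)\to\Ker(\pi_Y)\to 0$ with both ends in $\mathcal{Y}$. The paper does perform the corresponding check: the top row of its final diagram exhibits the kernel object $(K,\Ker(\beta e))_{\overline{ge}}$ as a member of $\mathcal{E}(\mathcal{X},M,\mathcal{Y})$.
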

\begin{proof} The ``only if" part follows since $\mathbf{P}$ and $\mathbf{Q}$ are exact functors.  Conversely, assume that both $\mathcal{X}$ and $\mathcal{Y}$ have enough projective objects. Let $(X,Y)_f\in \mathcal{E}(\mathcal{X},M,\mathcal{Y}).$
Then there exists a short exact sequence in $\mathcal{X}$: $$0\rightarrow K\rightarrow P\xrightarrow{\alpha}X\rightarrow 0,$$ where $P$ is a projective object of $\mathcal{X}$.  From this, we obtain the following commutative diagram:
\begin{equation*}\xymatrix{&&0\ar[d]&&0\ar[d]\\
&&\Ker(\beta)\ar[d]\ar[rr]^{\cong}_{\overline{g}}&&\Hom_A(M,K)\ar[d]\\
0\ar[r]& \Ker(g)\ar[d]^{\overline{\beta}}_{\cong}\ar[r] &T\ar[rr]^{g}\ar[d]^{\beta}&&\Hom_A(M,P)\ar[d]^{\Hom_A(M,\alpha)}\ar[r]&0\\
0\ar[r]&\Ker(f)\ar[r]&Y\ar[d]\ar[rr]^{f}&&\Hom_A(M,X)\ar[d]\ar[r]& 0\\
&&0&&0,}
\end{equation*}
where $(\beta,g)$ is the pullback of $(f,\Hom(M,\alpha))$. By Lemma~\ref{pushout1}, we see that $\Ker(g)\cong\Ker(f)$, $\Ker(\beta)\cong \Hom_A(M,K)$,  and that both rows and both columns are short exact sequences. Since $Y\in \mathcal{Y}$ and $\Ker(\beta)\cong \Hom_A(M,K)\in \mathcal{Y}$,  it follows that $T\in \mathcal{Y}$ and so we have the following exact sequence in $\mathcal{Y}$: $$0\rightarrow N\rightarrow Q\xrightarrow{e}T\rightarrow 0,$$ where $Q$ is a projective object of $\mathcal{Y}$.
Considering the following commutative diagram:
\begin{equation*}\xymatrix{
&\Ker(\overline{e})\ar@{^(->}[d]\ar[r]^{\cong}&N\ar@{^(->}[d]\\
0\ar[r]&\Ker(\beta e)\ar[r]\ar@{->>}[d]^{\overline{e}}&Q\ar[r]^{\beta e}\ar@{->>}[d]^{e} &Y\ar@{=}[d]\ar[r]&0\\
0\ar[r]&\Ker(\beta)\ar[r]&T\ar[r]^{\beta}&Y\ar[r]&0.}
\end{equation*}
In view of Lemma~\ref{pushout2}, the down-left square is a pushout-pullback diagram. It follows that the left column is also a short exact sequence and so $\Ker(\beta e)\in \mathcal{Y}$. Similarly, we get the following commutative diagram

\begin{equation*}\xymatrix{
&\Ker(e')\ar@{^(->}[d]\ar[r]^{\cong}&N\ar@{^(->}[d]\\
0\ar[r]&\Ker(g e)\ar[r]\ar@{->>}[d]^{e'}&Q\ar[rr]^{g e}\ar@{->>}[d]^{e} &&\Hom_A(M,P)\ar@{=}[d]\ar[r]&0\\
0\ar[r]&\Ker(g)\ar[r]&T\ar[rr]^{g}&&\Hom_A(M,P)\ar[r]&0}.
\end{equation*}
and so $\Ker(ge)\in \mathcal{Y}$. Finally, putting  these facts together, we obtain the following commutative diagram:
\begin{equation*}\xymatrix{&&0\ar[d]&&0\ar[d]\\
0\ar[r]&N\ar[d]\ar[r]&\Ker(\beta e)\ar[d]\ar@{->>}[rr]^{\overline{ge}}&&\Hom_A(M,K)\ar[d]\\
0\ar[r]& \Ker(ge)\ar[d]^{\overline{\beta}e'}\ar[r] &Q\ar[rr]^{ge}\ar[d]^{\beta e}&&\Hom_A(M,P)\ar[d]^{\Hom_A(M,\alpha)}\ar[r]&0\\
0\ar[r]&\Ker(f)\ar[r]&Y\ar[d]\ar[rr]^{f}&&\Hom_A(M,X)\ar[d]\ar[r]& 0\\
&&0&&0,}
 \end{equation*}  and it induces the following short exact sequence in $\mathcal{E}(\mathcal{X},M,\mathcal{Y})$:
 $$0\rightarrow (K,\Ker(\beta e))_{\overline{ge}}\rightarrow (P,Q)_{ge}\xrightarrow{(\alpha,\beta e)} (X,Y)_f\rightarrow 0.$$
 By Corollary~\ref{projective2}, we have $(P,Q)_{ge}$ is  a projective object of $\mathcal{E}(\mathcal{X},M,\mathcal{Y})$ and thus our proof is complete.
\end{proof}

 Let $\mathcal{C}$ be an abelian category which has enough injective objects. A full subcategory $\mathcal{D}$ of $\mathcal{C}$ is a {\it co-resolving} subcategory of $\mathcal{C}$ or just co-resolving if $\mathcal{I(C)}\subseteq \mathcal{D}$ and $\mathcal{D}$ is closed under direct summands, extensions and the cokernels of injective homomorphisms. We turn to consider when $\mathcal{E}(\mathcal{X},M,\mathcal{Y})$ is a co-resolving subcategory of $\mathrm{Rep}_h(A,M,B)$.

\begin{Theorem} \label{co-resolving} The following statements are equivalent:

$\mathrm{(1)}$ $\mathcal{E}(\mathcal{X},M,\mathcal{Y})$ is a co-resolving subcategory of $\mathrm{Rep}_h(A,M,B)$;

$\mathrm{(2)}$ Both $\mathcal{X}$ and $\mathcal{Y}$ are co-resolving and $\mathrm{Ext}^1_A(M, \mathcal{X})=0$.
\end{Theorem}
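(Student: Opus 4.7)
My plan is to prove the two implications separately, with one observation used throughout: for every $X \in \mathcal{X}$ one has $\mathbf{p}(X) = (X,\Hom_A(M,X))_1 \in \mathcal{E}(\mathcal{X},M,\mathcal{Y})$, because the structure map is the identity (surjective, with kernel $0 \in \mathcal{Y}$), and similarly $\mathbf{q}(Y) = (0,Y)_0 \in \mathcal{E}(\mathcal{X},M,\mathcal{Y})$ for every $Y \in \mathcal{Y}$. Crucially, no hypothesis on $\Hom_A(M,\mathcal{X})$ is required for this.

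For $(2) \Rightarrow (1)$, Theorem~\ref{in1} describes the injectives of $\mathrm{Rep}_h(A,M,B)$ as triples $(X,Y)_\varphi$ with $X$ an injective $A$-module, $\Ker\varphi$ an injective $B$-module, and $\varphi$ surjective; since $\mathcal{X}$ and $\mathcal{Y}$ contain their respective injectives, every such triple lies in $\mathcal{E}(\mathcal{X},M,\mathcal{Y})$. Closure under extensions is Proposition~\ref{extension}, and closure under direct summands is a componentwise verification. The substantive step is closure under cokernels of monomorphisms: given a short exact sequence $0 \to (X_1,Y_1)_{\varphi_1} \to (X_2,Y_2)_{\varphi_2} \to (X_3,Y_3)_{\varphi_3} \to 0$ with the first two terms in $\mathcal{E}(\mathcal{X},M,\mathcal{Y})$, the assumption $\Ext^1_A(M,X_1) = 0$ makes $0 \to \Hom_A(M,X_1) \to \Hom_A(M,X_2) \to \Hom_A(M,X_3) \to 0$ short exact; the Snake Lemma applied to the resulting $2 \times 3$ diagram then forces $\varphi_3$ surjective and $0 \to \Ker\varphi_1 \to \Ker\varphi_2 \to \Ker\varphi_3 \to 0$ short exact, whence $X_3 \in \mathcal{X}$ and $\Ker\varphi_3 \in \mathcal{Y}$ by the cokernel-closure of these categories.

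For $(1) \Rightarrow (2)$, I first establish $\Ext^1_A(M,\mathcal{X}) = 0$. Fix $X \in \mathcal{X}$, embed $X \hookrightarrow I$ into an injective $A$-module with cokernel $L$, and form the short exact sequence in $\mathrm{Rep}_h(A,M,B)$
\begin{equation*}
0 \to (X,\Hom_A(M,X))_1 \to (I,\Hom_A(M,I))_1 \to (L,\,\Hom_A(M,I)/\Hom_A(M,X))_{\bar\psi} \to 0,
\end{equation*}
whose cokernel has structure map $\bar\psi$ equal to the natural inclusion into $\Hom_A(M,L)$ with cokernel $\Ext^1_A(M,X)$. The left term is $\mathbf{p}(X) \in \mathcal{E}(\mathcal{X},M,\mathcal{Y})$, and the middle term, injective in $\mathrm{Rep}_h(A,M,B)$, also lies in $\mathcal{E}(\mathcal{X},M,\mathcal{Y})$; closure under cokernels of monomorphisms places the right term in $\mathcal{E}(\mathcal{X},M,\mathcal{Y})$, which forces $\bar\psi$ surjective, i.e.\ $\Ext^1_A(M,X) = 0$.

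With this vanishing in hand, $\mathbf{p}$ preserves short exactness of sequences in $\mathcal{X}$, and I transfer the four defining properties of a co-resolving subcategory from $\mathcal{E}(\mathcal{X},M,\mathcal{Y})$ back to $\mathcal{X}$ along $\mathbf{p}$: injective $A$-modules lie in $\mathcal{X}$ because $(I,\Hom_A(M,I))_1 \in \mathcal{I}(\mathrm{Rep}_h(A,M,B)) \subseteq \mathcal{E}(\mathcal{X},M,\mathcal{Y})$, while closure under extensions, cokernels of monomorphisms, and direct summands follows by applying $\mathbf{p}$ to the given sequence (or decomposition) in $A$-Mod and reading off the $X$-component. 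The analogous argument for $\mathcal{Y}$ along $\mathbf{q}$ is strictly easier since $\mathbf{q}$ is already exact by inspection. The main obstacle I anticipate is the threatened circularity in proving $\Ext^1_A(M,\mathcal{X}) = 0$, which is dissolved by the opening observation that $\mathbf{p}(X)$ automatically belongs to $\mathcal{E}(\mathcal{X},M,\mathcal{Y})$ whenever $X \in \mathcal{X}$.
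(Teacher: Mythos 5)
Your proposal is correct and follows essentially the same route as the paper: for $(1)\Rightarrow(2)$ you apply the cokernel-closure of $\mathcal{E}(\mathcal{X},M,\mathcal{Y})$ to the embedding $\mathbf{p}(X)\hookrightarrow\mathbf{p}(I)$ to extract $\Ext^1_A(M,X)=0$ (the paper identifies the cokernel via the Snake Lemma where you write it down explicitly, but it is the same computation), then transfer the co-resolving axioms back along the now-exact $\mathbf{p}$ and along $\mathbf{q}$; for $(2)\Rightarrow(1)$ both arguments reduce to the same Snake Lemma diagram. Your opening observation that $\mathbf{p}(X)\in\mathcal{E}(\mathcal{X},M,\mathcal{Y})$ needs no hypothesis on $\Hom_A(M,\mathcal{X})$ correctly dissolves the apparent circularity, exactly as in the paper.
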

\begin{proof} Assume that $\mathcal{E}(\mathcal{X},M,\mathcal{Y})$ is a co-resolving subcategory of $\mathrm{Rep}_h(A,M,B)$.  Let $X\in \mathcal{X}$. Then we have a short exact sequence $$0 \rightarrow X\xrightarrow{\alpha} E\rightarrow N\rightarrow 0,$$
where $E$ is an  injective $A\text{-}$module.  This gives rise to an injective homomorphism $$(\alpha, \Hom(M,\alpha)): (X,\Hom_A(M,X))_1\rightarrow (E,\Hom_A(M,E))_1.$$
Write $\mathrm{Coker}(\alpha, \Hom(M,\alpha))$ as $(N',Y)_f.$  Then $(N',Y)_f\in \mathcal{E}(\mathcal{X},M,\mathcal{Y})$ and the following sequence
$$ 0\rightarrow (X,\Hom_A(M,X))_1\xrightarrow{(\alpha, \Hom_A(M,\alpha))} (E,\Hom_A(M,E))_1 \rightarrow (N',Y)_f\rightarrow 0$$ is exact in $\mathrm{Rep}_h(A,M,B)$. From this it follows that $N'\cong N$ and $$0\rightarrow \Hom_A(M,X)\xrightarrow{\Hom_A(M,\alpha)}\Hom_A(M,E) \rightarrow Y\rightarrow 0$$ is a short exact sequence in $B$-Mod.  In addition,
we have the following commutative diagram:
        \begin{equation*} \xymatrix{
      0\ar[r]   & \Hom_A(M,X) \ar[rr]^{\Hom_A(M,\alpha)} \ar[d]^{1} & & \Hom_A(M,E) \ar[d]_{1} \ar[r] & Y \ar[d]^{f}\ar[r]&0 \\
        0\ar[r]&  \Hom_A(M,X) \ar[rr]^{\Hom_A(M,\alpha)} &&\Hom_A(M,E)\ar[r]&\Hom_A(M,N').  }
        \end{equation*}
        In view of the Snake Lemma, it follows that $\Ker(f)=0$  and so $f$ is an isomorphism. Hence  the second row can be completed into  a short exact sequence in $B$-Mod and this implies that $\mathrm{Ext}^1_A(M, X)=0$ by the Long Exact Sequence Theorem.

        Since $\mathrm{Ext}^1_A(M, \mathcal{X})=0$,  $\mathbf{p}$ is an exact functor and so  both $\mathcal{X}$ and $\mathcal{Y}$ is closed under extensions. It is clear that both $\mathcal{X}$ and $\mathcal{Y}$ is closed under direct summands. Finally, the exactness of $\mathbf{p}$ and $\mathbf{q}$  implies both $\mathcal{X}$ and $\mathcal{Y}$ is closed under the cokernels of injective homomorphisms and Theorem~\ref{in1} implies that $\mathcal{X}$ and $\mathcal{Y}$ contains all the injective $A$-modules and injective $B$-modules respectively.

        Conversely, assume that both $\mathcal{X}$ and $\mathcal{Y}$ are co-resolving and $\mathrm{Ext}^1_A(M, \mathcal{X})=0$.  It is clear that $\mathcal{E}(\mathcal{X},M,\mathcal{Y})$  contains all injective objects of $\mathrm{Rep}_h(A,M,B)$ by Theorem~\ref{in1} and is closed under direct summands and extensions. Let $$0\rightarrow  (X,Y)_f\xrightarrow{(\alpha,\beta)} (X',Y')_{f'}\rightarrow (U,V)_g\rightarrow 0$$ be a short exact sequence in $\mathrm{Rep}_h(A,M,B)$ such that  its first and second nonzero terms belong to $\mathcal{E}(\mathcal{X},M,\mathcal{Y})$. Then $U\in \mathcal{X}$. In addition, we have the following commutative diagram:
           \begin{equation*} \label{CD1}\xymatrix{
      0\ar[r]   & Y \ar[rr]^{\beta} \ar[d]^{f} & & Y' \ar[d]_{f'} \ar[r] & V \ar[d]^{g}\ar[r]&0 \\
        0\ar[r]&  \Hom_A(M,X) \ar[rr]^{\Hom_A(M,\alpha)} &&\Hom_A(M,X')\ar[r]&\Hom_A(M,U)\ar[r]&0.  }
        \end{equation*}
        Note that both rows are short exact sequences. In view of the Snake Lemma, we have   $g$ is surjective, and   the following sequence $$0\rightarrow \Ker(f)\rightarrow \Ker(f')\rightarrow \Ker(g)\rightarrow 0$$ is exact. Hence $\Ker(g)\in \mathcal{Y}$ and this implies $ (U,V)_g\in \mathcal{E}(\mathcal{X},M,\mathcal{Y})$, completing the proof.
\end{proof}

In view of Theorem~\ref{co-resolving}, we call $\mathcal{X}$  to be {\it $M^{\bot}$-co-resolving } provided that $\mathcal{X}$ is a co-resolving subcategory of $A$-Mod  such that $\mathrm{Ext}^1_A(M, X)=0$ for any $X\in\mathcal{X}$. The following result shows that there are  a unique largest  $M^{\bot}$- co-resolving subcategory in $A$-Mod and a unique largest   co-resolving subcategory in $\mathrm{Rep}_h(A,M,B)$ of the form $\mathcal{E}(\mathcal{X},M,\mathcal{Y})$.

\begin{Proposition} Set $\mathbb{X}:=\{X\in A\text{-}\mathrm{Mod}|\Ext^i_A(M,X)=0,  \forall i\geq 1\}$.

$\mathrm{(1)}$  $\mathbb{X}$ is  an $M^{\bot}$-co-resolving subcategory.

$\mathrm{(2)}$  Any $M^{\bot}$-co-resolving subcategory of $A$-{\rm Mod}  is included in $\mathbb{X}$.

$\mathrm{(3)}$ $\mathcal{E}(\mathbb{X},M,B)$ is the largest co-resolving subcategory of $\mathrm{Rep}_h(A,M,B)$ of the form $\mathcal{E}(\mathcal{X},M,\mathcal{Y})$.

$\mathrm{(4)}$ $\mathcal{E}(A,M,B)$ is a co-resolving  subcategory of $\mathrm{Rep}_h(A,M,B)$  if and only if $M$ is a projective $A$-module.
\end{Proposition}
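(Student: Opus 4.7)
The plan is to handle the four parts in order, with parts (3) and (4) being essentially corollaries of (1) and (2) combined with Theorem~\ref{co-resolving}. The main tool throughout is the long exact sequence of $\Ext^{*}_{A}(M,-)$.

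For (1), I would verify the four closure conditions in the definition of an $M^{\bot}$-co-resolving subcategory. Every injective $A$-module lies in $\mathbb{X}$ because $\Ext^{i}_{A}(M,-)$ vanishes on injectives for all $i\geq 1$. Closure under direct summands follows from the additivity of $\Ext$. Closure under extensions and cokernels of injective homomorphisms follows at once from the long exact sequence: given $0\to X_{1}\to X_{2}\to X_{3}\to 0$ with the appropriate two of the three terms in $\mathbb{X}$, the vanishing on the remaining term is read off between two zeros in the long exact sequence. The condition $\Ext^{1}_{A}(M,\mathcal{X})=0$ is built into the definition of $\mathbb{X}$, so $\mathbb{X}$ is $M^{\bot}$-co-resolving.

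The substantive step is (2). Given an $M^{\bot}$-co-resolving subcategory $\mathcal{X}$ and $X\in\mathcal{X}$, I would build, piece by piece, a partial injective coresolution $0\to X\to E^{0}\to E^{1}\to\cdots$ whose cosyzygies $X=X^{0},X^{1},X^{2},\ldots$ all stay inside $\mathcal{X}$: at each step $X^{n}\in\mathcal{X}$ embeds into an injective envelope $E^{n}$, which lies in $\mathcal{X}$ because $\mathcal{X}$ contains all injectives, and the cokernel $X^{n+1}$ is again in $\mathcal{X}$ by closure under cokernels of injections. Dimension shifting then gives $\Ext^{i+1}_{A}(M,X)\cong\Ext^{i}_{A}(M,X^{1})\cong\cdots\cong\Ext^{1}_{A}(M,X^{i})$, and the last term vanishes because $X^{i}\in\mathcal{X}$ and $\Ext^{1}_{A}(M,\mathcal{X})=0$. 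Hence $X\in\mathbb{X}$.

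Parts (3) and (4) then fall out cleanly. By Theorem~\ref{co-resolving}, any co-resolving subcategory of the form $\mathcal{E}(\mathcal{X},M,\mathcal{Y})$ forces $\mathcal{X}$ to be $M^{\bot}$-co-resolving, hence by (2) $\mathcal{X}\subseteq\mathbb{X}$; since trivially $\mathcal{Y}\subseteq B\text{-Mod}$, one obtains $\mathcal{E}(\mathcal{X},M,\mathcal{Y})\subseteq\mathcal{E}(\mathbb{X},M,B)$, and the latter is itself co-resolving by (1) together with Theorem~\ref{co-resolving}. For (4), the same theorem shows that $\mathcal{E}(A,M,B)$ is co-resolving if and only if $A\text{-Mod}$ is $M^{\bot}$-co-resolving, i.e.\ $\Ext^{1}_{A}(M,X)=0$ for every left $A$-module $X$, which is exactly the statement that $M$ is a projective left $A$-module. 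The only real obstacle is making the dimension-shift argument in (2) rigorous, and this reduces to noticing that the closure properties of an $M^{\bot}$-co-resolving category are precisely what is needed to keep every cosyzygy of an injective coresolution inside the category.
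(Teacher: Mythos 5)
Your proposal is correct and follows essentially the same route as the paper: the long exact sequence of $\Ext^{*}_A(M,-)$ for part (1), dimension shifting along an injective coresolution whose cosyzygies remain in $\mathcal{X}$ for part (2), and Theorem~\ref{co-resolving} combined with (1) and (2) for parts (3) and (4). The only cosmetic difference is that in (4) you invoke Theorem~\ref{co-resolving} directly via the vanishing of $\Ext^{1}_A(M,-)$ on all of $A$-Mod, whereas the paper routes through (3) and the identity $\mathbb{X}=A\text{-Mod}$; both reduce to the same standard characterization of projectivity.
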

\begin{proof} (1) That  $\mathbb{X}$ contains all injective $A$-module follows directly from  the definition of the functor $\mathrm{Ext}^i_A(M,-)$. Since $\mathrm{Ext}^i_A(M,-)$ commutes with products and particularly, finite direct sums, $\mathbb{X}$ is closed under direct summands. Finally, that $\mathbb{X}$ is closed under extensions and the co-kernels of injective homomorphisms follows from the Long Exact Sequence Theorem.

(2) Let $\mathcal{X}$ be an $M^{\bot}$-co-resolving subcategory of $A$-Mod. Take $X\in \mathcal{X}$ and assume that $$0\rightarrow X\rightarrow E_0\xrightarrow{f_0}E_1\xrightarrow{f_1}E_2\xrightarrow{f_2}E_3\rightarrow \cdots$$
 is an injective resolution of $X$. Since $\mathcal{X}$ is co-resolving, it follows that   $X_i:= \mathrm{Im}(f_i)\in \mathcal{X}$ for $i=0,1,2,\dots$. Hence, $$\mathrm{Ext}^i_A(M,X)\cong \mathrm{Ext}_A^{i-1}(M,X_0)\cong \cdots\cong \mathrm{Ext}_A^{1}(M,X_{i-2})=0 $$ for all $i\geq 1$, and so $\mathcal{X}\subseteq \mathbb{X}$, as desired.

(3) It is clear from (2).

(4) By (3), $\mathcal{E}(A,M,B)$ is co-resolving if and only if $\mathbb{X}=A\text{-Mod}$. The latter means   $\Ext^i_A(M,X)=0$ for any $X\in A\mathrm{\text{-}Mod}$ and for $i\geq 1$, which is equivalent to saying that $M$ is a projective $A$-module.
 \end{proof}

\begin{Theorem}\label{frobenius} Suppose   $\Hom_A(M,\mathcal{X})\subseteq \mathcal{Y}$ and that  $\mathcal{E}(\mathcal{X},M,\mathcal{Y})$ is a co-resolving subcategory of  $\mathrm{Rep}_h(A,M,B)$. Then the following statements are equivalent:

$\mathrm{(1)}$ $\mathcal{E}(\mathcal{X},M,\mathcal{Y})$ is a Frobenius category;

$\mathrm{(2)}$ Both $\mathcal{X}$ and $\mathcal{Y}$ are Frobenius category and $\Hom_A(M,\mathcal{I}(\mathcal{X}))\subseteq I(\mathcal{Y})$;

$\mathrm{(3)}$ Both $\mathcal{X}$ and $\mathcal{Y}$ are Frobenius category and $\Hom_A(M,\mathcal{P}(\mathcal{X}))\subseteq P(\mathcal{Y})$.

\end{Theorem}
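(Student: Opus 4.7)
The plan is to reduce the Frobenius characterization of $\mathcal{E}(\mathcal{X},M,\mathcal{Y})$ to explicit descriptions of its projective and injective objects, and then to transport the equality $\mathcal{P}(\mathcal{E}(\mathcal{X},M,\mathcal{Y})) = \mathcal{I}(\mathcal{E}(\mathcal{X},M,\mathcal{Y}))$ between $\mathcal{E}(\mathcal{X},M,\mathcal{Y})$ and the component categories $\mathcal{X}, \mathcal{Y}$ via the structure functors $\mathbf{p}$ and $\mathbf{q}$. Since $\mathcal{E}(\mathcal{X},M,\mathcal{Y})$ is co-resolving, Theorem~\ref{co-resolving} yields $\Ext^1_A(M,\mathcal{X})=0$ (so $\mathbf{p}$ is exact) and that $\mathcal{X}, \mathcal{Y}$ are co-resolving; in particular $\mathcal{X}, \mathcal{Y}, \mathcal{E}(\mathcal{X},M,\mathcal{Y})$ each have enough injective objects. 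A standard splitting argument in any co-resolving subcategory $\mathcal{D}$ of an abelian category $\mathcal{A}$ with enough injectives shows $\mathcal{I}(\mathcal{D}) = \mathcal{D}\cap \mathcal{I}(\mathcal{A})$, which combined with Theorem~\ref{in1} gives the key characterization: $(X,Y)_\varphi \in \mathcal{I}(\mathcal{E}(\mathcal{X},M,\mathcal{Y}))$ iff $X\in \mathcal{I}(\mathcal{X})$, $\Ker(\varphi)\in \mathcal{I}(\mathcal{Y})$, and $\varphi$ is surjective. The equivalence (2)$\iff$(3) is then immediate since Frobenius of $\mathcal{X}, \mathcal{Y}$ forces $\mathcal{P}(\mathcal{X})=\mathcal{I}(\mathcal{X})$ and $\mathcal{P}(\mathcal{Y})=\mathcal{I}(\mathcal{Y})$.

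For (2)$\Rightarrow$(1): Theorem~\ref{enough} gives enough projectives in $\mathcal{E}(\mathcal{X},M,\mathcal{Y})$. For any $(P,Q)_\varphi \in \mathcal{P}(\mathcal{E}(\mathcal{X},M,\mathcal{Y}))$, Corollary~\ref{projective2} yields $P\in \mathcal{P}(\mathcal{X})$ and $Q$ $\mathcal{Y}$-projective; the extension $0\to \Ker(\varphi)\to Q\to \Hom_A(M,P)\to 0$ has both ends in $\mathcal{Y}$ (using $\Hom_A(M,\mathcal{X})\subseteq \mathcal{Y}$), so $Q\in \mathcal{Y}$ and hence $Q\in \mathcal{P}(\mathcal{Y})$. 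Under (2), $P\in \mathcal{I}(\mathcal{X})$ and $\Hom_A(M,P)\in \mathcal{I}(\mathcal{Y})=\mathcal{P}(\mathcal{Y})$; its projectivity splits the displayed sequence, making $\Ker(\varphi)$ a summand of $Q\in \mathcal{I}(\mathcal{Y})$, hence $\Ker(\varphi)\in \mathcal{I}(\mathcal{Y})$ and $(P,Q)_\varphi \in \mathcal{I}(\mathcal{E}(\mathcal{X},M,\mathcal{Y}))$. The reverse inclusion, starting from $(E,F)_\sigma \in \mathcal{I}(\mathcal{E}(\mathcal{X},M,\mathcal{Y}))$ and splitting $0\to \Ker(\sigma)\to F\to \Hom_A(M,E)\to 0$, is proved symmetrically.

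For (1)$\Rightarrow$(2): Theorem~\ref{enough} yields enough projectives in $\mathcal{X}, \mathcal{Y}$. Using that $\mathbf{q}(Y) = (0,Y)_0$ is in $\mathcal{I}(\mathcal{E}(\mathcal{X},M,\mathcal{Y}))$ iff $Y \in \mathcal{I}(\mathcal{Y})$ and (by Corollary~\ref{projective2}) in $\mathcal{P}(\mathcal{E}(\mathcal{X},M,\mathcal{Y}))$ iff $Y$ is $\mathcal{Y}$-projective, the Frobenius relation $\mathcal{I}(\mathcal{E}(\mathcal{X},M,\mathcal{Y})) = \mathcal{P}(\mathcal{E}(\mathcal{X},M,\mathcal{Y}))$ immediately gives $\mathcal{Y}$ Frobenius. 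Evaluating on $\mathbf{p}(X) = (X,\Hom_A(M,X))_1$ in the same way gives $\mathcal{I}(\mathcal{X})\subseteq \mathcal{P}(\mathcal{X})$ and, as a byproduct, the Hom condition $\Hom_A(M,\mathcal{I}(\mathcal{X}))\subseteq \mathcal{P}(\mathcal{Y}) = \mathcal{I}(\mathcal{Y})$. The main obstacle is the reverse inclusion $\mathcal{P}(\mathcal{X})\subseteq \mathcal{I}(\mathcal{X})$: given $P\in \mathcal{P}(\mathcal{X})$, since $\Hom_A(M,P)\in \mathcal{Y}$ and $\mathcal{Y}$ has enough projectives, I pick a short exact sequence $0\to K\to Q\to \Hom_A(M,P)\to 0$ in $\mathcal{Y}$ with $Q\in \mathcal{P}(\mathcal{Y})$; then $(P,Q)_\varphi \in \mathcal{E}(\mathcal{X},M,\mathcal{Y})$ (with $\varphi$ the chosen surjection and $\Ker(\varphi)=K\in \mathcal{Y}$), and Corollary~\ref{projective2} places it in $\mathcal{P}(\mathcal{E}(\mathcal{X},M,\mathcal{Y})) = \mathcal{I}(\mathcal{E}(\mathcal{X},M,\mathcal{Y}))$; the explicit description of $\mathcal{I}$ then forces $P\in \mathcal{I}(\mathcal{X})$. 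This step is delicate because it requires realizing an arbitrary projective of $\mathcal{X}$ as the first coordinate of an $\mathcal{E}$-projective object, which is made possible precisely by the standing hypothesis $\Hom_A(M,\mathcal{X})\subseteq \mathcal{Y}$ together with the enough-projectives in $\mathcal{Y}$ coming from Frobenius of $\mathcal{E}(\mathcal{X},M,\mathcal{Y})$.
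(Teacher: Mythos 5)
Your proposal is correct and follows essentially the same route as the paper: both rest on the observation that the injective objects of a co-resolving subcategory are the ambient injectives it contains (combined with Theorem~\ref{in1}), on Corollary~\ref{projective2} and Theorem~\ref{enough} for the projective side, on the splitting arguments for (2)$\Rightarrow$(1), and on the same key trick for (1)$\Rightarrow$(2) of realizing an arbitrary $P\in\mathcal{P}(\mathcal{X})$ as the first coordinate of a projective object $(P,Q)_{\varphi}$ via enough projectives in $\mathcal{Y}$. The only difference is presentational: you spell out explicitly why $Q\in\mathcal{Y}$ (so that $\mathcal{Y}$-projectivity upgrades to $Q\in\mathcal{P}(\mathcal{Y})$), which the paper leaves implicit.
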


\begin{proof} We first  observe the following fact: if $\mathcal{C}$ is a co-resolving subcategory of $R$-Mod, where $R$ is a ring, then  the injective objects of $\mathcal{C}$ are the same as those of $R$-Mod. To see this, let $U$ be an injective object of $\mathcal{C}$. Then there is a short exact sequence in $\mathcal{C}$:
$0\rightarrow U\xrightarrow{f} V\rightarrow N\rightarrow 0$ such that $V$ is an injective object of $R$-Mod. It is not hard to see that this sequence splits and so $U$ is a direct summand of $V$. From this, it follows that  $U$ is an injective object of $R$-Mod. The converse is trivial.

$\mathrm{(2)}\Longleftrightarrow \mathrm{(3)}$  It is clear from the definition of a Frobenius category.

In the rest part of this proof, we write $\mathcal{E}$ for $\mathcal{E}(\mathcal{X},M,\mathcal{Y})$.

$\mathrm{(1)}\Longrightarrow \mathrm{(2)}$  If $Y\in \mathcal{I(Y)}$, then $(0,Y)_0\in \mathcal{I(E)}$ by  Theorem~\ref{in1}. Since $\mathcal{I(E)}=\mathcal{P(E)}$, it follows that $(0,Y)_0\in \mathcal{P(E)}$ and this implies $Y\in \mathcal{P(Y)}$ and so $\mathcal{I(Y)}\subseteq \mathcal{P(Y)}$ by Corollary~\ref{projective2}. Similarly, we have $\mathcal{I(Y)}\supseteq\mathcal{P(Y)}$.

Let $X\in \mathcal{I}(\mathcal{X})$.  Then $(X,\Hom_A(M,X))_1\in \mathcal{I(E)}$ by   Theorem~\ref{in1}.  It follows that  $(X,\Hom_A(M,X))\in\mathcal{ P(E)}$, and so $X\in \mathcal{P(X)}$ and $\Hom_A(M,X)\in \mathcal{P(Y)}$. Hence  $\Hom_A(M,\mathcal{I}(\mathcal{X}))\subseteq \mathcal{I(Y)}$ and $\mathcal{I}(\mathcal{X})\subseteq \mathcal{P}(\mathcal{X})$. Conversely, we take $X\in \mathcal{P(X)}$.  Since $\mathcal{Y}$ has enough projective objects by Theorem~\ref{enough}, there is  a short exact sequence $0\rightarrow K\rightarrow P\xrightarrow{\varphi} \Hom_A(M,X)\rightarrow 0$ in $\mathcal{Y}$ such that $P\in \mathcal{P(Y)}$. It follows that $(X,P)_{\varphi}\in \mathcal{P(E)}=\mathcal{I(E)}$ and so $X\in \mathcal{I(X)}$. These facts together with Theorem~\ref{enough} imply both $\mathcal{X}$ and $\mathcal{Y}$ are Frobenius categories.

$\mathrm{(2)}\Longrightarrow \mathrm{(1)}$  If $(X,Y)_{\varphi}\in \mathcal{P(E)}$, then $X\in \mathcal{P(X)}=\mathcal{I(X)}$ and $Y\in \mathcal{P(Y)}=\mathcal{I(Y)}$ by Corollary~\ref{projective2}.    Since $\Hom_A(M,X)\in \mathcal{P(Y)}$, the short exact sequence $0\rightarrow \Ker{\varphi}\rightarrow Y\xrightarrow{\varphi} \Hom_A(M,X)\rightarrow 0$ splits and it follows that $\Ker(\varphi)\in \mathcal{I(Y)}$. Hence $(X,Y)_{\varphi}\in \mathcal{I(E)}$.

Conversely, if $(X,Y)_{\varphi}\in \mathcal{I(E)}$ then $\Ker(\varphi)\in \mathcal{I(Y)}$ and $X\in \mathcal{I(X)}$. From this it follows that $0\rightarrow \Ker{\varphi}\rightarrow Y\xrightarrow{\varphi} \Hom_A(M,X)\rightarrow 0$ splits and thus  $Y=\Ker(\varphi)\oplus \Hom_A(M,X)\in \mathcal{I(Y)}=\mathcal{P(Y)}$. Consequently $(X,Y)_{\varphi}\in \mathcal{P(E)}$ and so $\mathcal{I(E)}=\mathcal{P(E)}$. Now this fact together with Theorem~\ref{enough}  implies $\mathcal{E}$ is a Frobenius category.
\end{proof}

In the final part of this section  we always  assume  $\mathcal{E}(\mathcal{X},M,\mathcal{Y})$ is a co-resolving subcategory of  $\mathrm{Rep}_h(A,M,B)$ and that  $\Hom_A(M,\mathcal{X})\subseteq \mathcal{Y}$.
Under these assumptions, if $(X,Y)_{\varphi}\in \mathcal{E}(\mathcal{X},M,\mathcal{Y})$, then $Y\in \mathcal{Y}$. Hence, we now regard $\mathbf{Q}$ as a functor from $\mathcal{E}(\mathcal{X},M,\mathcal{Y})$ to $\mathcal{Y}$. We establish the following  recollement of triangulated categories  when $\mathcal{E}(\mathcal{X},M,\mathcal{Y})$ is a Frobenius category in a similar way as in \cite[Theorem 1.3]{XZZ}.

\begin{equation}  \label{recollement}\tag{Fig 2} \xymatrix{\mathrm{St}\ \mathcal{Y}\ar@<0.1ex>[rrr]|-{\overline{\mathbf{q}}} &&&  \mathrm{St}\ \mathcal{E}(\mathcal{X},M,\mathcal{Y})\ar@<1.6ex>@/^/[lll]^-{\overline{\mathbb{Q}}}\ar@<-1.8ex>@/_/[lll]_-{\overline{\mathbf{Q}}}\ar@<0.1ex>[rrr]|{\overline{\mathbf{P}}}& & &\mathrm{ \mathrm{St}}\ \mathcal{X} \ar@<1.6ex>@/^/[lll]^-{\overline{\mathbf{p}}}\ar@<-1.8ex>@/_/[lll]_-{},}
\end{equation}

\begin{Lemma}\label{adjoint}  $(\mathbf{Q,q})$, $(\mathbf{q},\mathbb{Q})$ and $(\mathbf{P,p})$ are adjoint pairs.
\end{Lemma}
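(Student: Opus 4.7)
The plan is to verify each adjunction separately by writing down an explicit natural bijection on Hom-sets, exploiting the explicit description of morphisms in $\mathrm{Rep}_h(A,M,B)$ as pairs $(f,g)$ satisfying a commutativity square. Since $\mathbf{p}$ and $\mathbf{q}$ send objects to triples of very simple shape, $(X',\Hom_A(M,X'))_1$ or $(0,Y')_0$, in each adjunction one of the two components of a morphism in $\mathcal{E}(\mathcal{X},M,\mathcal{Y})$ will be forced either to vanish or to be determined by the other; this is the key observation that drives all three verifications.

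For $(\mathbf{P},\mathbf{p})$ I would observe that any morphism $(f,g) : (X,Y)_{\varphi} \to (X',\Hom_A(M,X'))_1$ must satisfy $g = \Hom_A(M,f)\circ\varphi$, so $g$ is completely determined by $f$, and the assignment $(f,g)\mapsto f$ yields a bijection with $\Hom_{\mathcal{X}}(X,X') = \Hom_{\mathcal{X}}(\mathbf{P}(X,Y)_{\varphi}, X')$. For $(\mathbf{Q},\mathbf{q})$, a morphism $(f,g) : (X,Y)_{\varphi} \to (0,Y')_0$ forces $f = 0$, and the commutativity square becomes trivial because $\Hom_A(M,0) = 0$, so $g$ is unconstrained and $(0,g)\mapsto g$ is a bijection onto $\Hom_B(Y,Y')$. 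Finally, for $(\mathbf{q},\mathbb{Q})$, a morphism $(f,g) : (0,Y)_0 \to (X',Y')_{\varphi'}$ again forces $f = 0$, and the square reduces to $\varphi'\circ g = 0$; by the universal property of the kernel, such $g$ correspond naturally to $B$-homomorphisms $Y \to \Ker(\varphi') = \mathbb{Q}(X',Y')_{\varphi'}$.

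There is no substantial obstacle here, since each case is a direct unpacking of the definition of a morphism in $\mathrm{Rep}_h(A,M,B)$ combined, in the third case, with the universal property of kernels. The only subtlety worth flagging is that, in order to identify $\Hom_B(Y,Y')$ with $\Hom_{\mathcal{Y}}(\mathbf{Q}(X,Y)_{\varphi}, Y')$ in the adjunction $(\mathbf{Q},\mathbf{q})$, one must invoke the standing hypotheses that $\mathcal{E}(\mathcal{X},M,\mathcal{Y})$ is co-resolving and $\Hom_A(M,\mathcal{X})\subseteq\mathcal{Y}$, which together force $Y\in\mathcal{Y}$ (as noted in the paragraph preceding the lemma) so that $\mathbf{Q}$ indeed lands in $\mathcal{Y}$. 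Naturality of each bijection in both variables is a routine diagram chase and I would leave it implicit.
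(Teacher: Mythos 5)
Your proposal is correct and follows essentially the same route as the paper: the paper likewise computes each Hom-set directly, identifying $\Hom((X,Y)_{\varphi},(0,Y_1)_0)\cong\Hom_B(Y,Y_1)$, identifying $\Hom((0,Y_1)_0,(X,Y)_{\varphi})$ with $\{g:\varphi g=0\}\cong\Hom_B(Y_1,\Ker(\varphi))$ via the universal property of the kernel, and treating $(\mathbf{P},\mathbf{p})$ as the analogous case where the second component is determined by the first. Your added remark that the standing hypotheses force $Y\in\mathcal{Y}$ so that $\mathbf{Q}$ lands in $\mathcal{Y}$ is a point the paper also makes, in the paragraph preceding the lemma.
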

\begin{proof}  Let $Y_1\in \mathcal{Y}$ and $(X,Y)_{\varphi}\in \mathcal{E}(\mathcal{X},M,\mathcal{Y})$.
Then \begin{equation*}\begin{aligned}\Hom((X,Y)_{\varphi},\mathbf{q}(Y_1))\cong \Hom((X,Y)_{\varphi},(0,Y_1)_0)\\\cong\Hom(Y,Y_1)\cong \Hom(\mathbf{Q}(X,Y)_{\varphi},Y_1)\end{aligned}
\end{equation*}

 and \begin{equation*}\begin{aligned}\Hom(\mathbf{q}(Y_1), (X,Y)_{\varphi})\cong \Hom((0,Y_1)_0, (X,Y)_{\varphi})\\=\{g\in \Hom(Y_1,Y)|\varphi g=0 \}\cong \Hom(Y_1,\Ker(\varphi))\\=\Hom(Y_1,\mathbb{Q}(X,Y)_{\varphi})\end{aligned}
\end{equation*}
This shows that $(\mathbf{Q,q})$ and $(\mathbf{q}, \mathbb{Q})$ are adjoint pairs. Similarly, $(\mathbf{P,p})$ is an adjoint pair.

\end{proof}

\begin{Lemma} \label{quotient} If $\mathcal{E}(\mathcal{X},M,\mathcal{Y})$ is a Frobenius category then there is a triangulated equivalence:  $\mathrm{St}\ \mathcal{E}(\mathcal{X},M,\mathcal{Y}) /\mathbf{\overline{q}}( \mathrm{St} \ \mathcal{Y})\cong  \mathrm{St}\ \mathcal{X}$.
\end{Lemma}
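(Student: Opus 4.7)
The plan is to show that $\overline{\mathbf{P}}$ induces the desired equivalence after passing to the Verdier quotient by $\overline{\mathbf{q}}(\mathrm{St}\ \mathcal{Y})$. First I would verify that $\overline{\mathbf{P}}\colon \mathrm{St}\ \mathcal{E}(\mathcal{X},M,\mathcal{Y}) \to \mathrm{St}\ \mathcal{X}$ and $\overline{\mathbf{p}}\colon \mathrm{St}\ \mathcal{X} \to \mathrm{St}\ \mathcal{E}(\mathcal{X},M,\mathcal{Y})$ are well-defined triangulated functors via Lemma~\ref{tri}. The functor $\mathbf{P}$ is exact by construction, and sends a projective-injective $(P,Q)_{\varphi} \in \mathcal{P}(\mathcal{E})=\mathcal{I}(\mathcal{E})$ to $P \in \mathcal{P}(\mathcal{X})=\mathcal{I}(\mathcal{X})$ by Corollary~\ref{projective2} and Theorem~\ref{frobenius}. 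The functor $\mathbf{p}$ is exact because $\mathrm{Ext}^1_A(M,\mathcal{X})=0$ (which holds by Theorem~\ref{co-resolving}, since $\mathcal{E}(\mathcal{X},M,\mathcal{Y})$ is co-resolving), and by Theorem~\ref{frobenius} it sends a projective-injective $X \in \mathcal{X}$ to $(X,\Hom_A(M,X))_1$, which is projective-injective in $\mathcal{E}$ since $\Hom_A(M,X) \in \mathcal{I}(\mathcal{Y})$.

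Next I would observe that $\mathbf{P}\circ \mathbf{p} = \mathrm{id}_{\mathcal{X}}$ on the nose, so the counit of the adjunction $(\mathbf{P},\mathbf{p})$ from Lemma~\ref{adjoint} is the identity. Hence $\overline{\mathbf{P}} \circ \overline{\mathbf{p}} \cong \mathrm{id}_{\mathrm{St}\ \mathcal{X}}$, which shows that $\overline{\mathbf{p}}$ is a fully faithful right adjoint of $\overline{\mathbf{P}}$ and in particular that $\overline{\mathbf{P}}$ is essentially surjective. Also $\mathbf{P}\circ \mathbf{q} = 0$, so $\overline{\mathbf{q}}(\mathrm{St}\ \mathcal{Y}) \subseteq \ker(\overline{\mathbf{P}})$.

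The central step will be the reverse inclusion $\ker(\overline{\mathbf{P}}) \subseteq \overline{\mathbf{q}}(\mathrm{St}\ \mathcal{Y})$. Take $(X,Y)_{\varphi}\in \mathcal{E}(\mathcal{X},M,\mathcal{Y})$ with $\overline{\mathbf{P}}(X,Y)_{\varphi}=0$ in $\mathrm{St}\ \mathcal{X}$, so that $X$ is a projective-injective object of $\mathcal{X}$. By Theorem~\ref{frobenius}, $\Hom_A(M,X) \in \mathcal{I}(\mathcal{Y})=\mathcal{P}(\mathcal{Y})$, so $(X,\Hom_A(M,X))_1$ is projective-injective in $\mathcal{E}(\mathcal{X},M,\mathcal{Y})$. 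The short exact sequence
\[
0 \to (0,\Ker\varphi)_0 \to (X,Y)_{\varphi} \xrightarrow{(1,\varphi)} (X,\Hom_A(M,X))_1 \to 0
\]
is exact in $\mathcal{E}(\mathcal{X},M,\mathcal{Y})$ (this is where $\varphi$ being surjective with $\Ker\varphi \in \mathcal{Y}$ is crucial), and gives a standard triangle in $\mathrm{St}\ \mathcal{E}(\mathcal{X},M,\mathcal{Y})$ whose third term vanishes. Thus $(X,Y)_{\varphi}\cong (0,\Ker\varphi)_0=\mathbf{q}(\Ker\varphi)$ in the stable category, and so it belongs to $\overline{\mathbf{q}}(\mathrm{St}\ \mathcal{Y})$.

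Finally I would invoke the standard principle that a triangulated functor with a fully faithful right adjoint is a Verdier localization onto its essential image, with kernel equal to the kernel of the functor. Combining the previous steps, $\overline{\mathbf{P}}$ induces a triangulated equivalence
\[
\mathrm{St}\ \mathcal{E}(\mathcal{X},M,\mathcal{Y})/\overline{\mathbf{q}}(\mathrm{St}\ \mathcal{Y}) \xrightarrow{\;\sim\;} \mathrm{St}\ \mathcal{X}.
\]
The main obstacle is the kernel computation, and specifically ensuring that the exact sequence above lives in $\mathcal{E}(\mathcal{X},M,\mathcal{Y})$ (which uses $\Hom_A(M,\mathcal{X})\subseteq \mathcal{Y}$, so that $(X,\Hom_A(M,X))_1$ is actually an object of $\mathcal{E}(\mathcal{X},M,\mathcal{Y})$) and that its third term becomes zero in the stable category, which depends on the Frobenius characterization from Theorem~\ref{frobenius}.
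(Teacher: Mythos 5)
Your proposal is correct, and its overall architecture coincides with the paper's: both identify $\ker(\overline{\mathbf{P}})$ with $\overline{\mathbf{q}}(\mathrm{St}\ \mathcal{Y})$ and then pass to the Verdier quotient. The difference lies in how the kernel is computed and in how the quotient is recognized. For the kernel, the paper takes $(X,Y)_{\varphi}$ with $X\in\mathcal{I(X)}$, embeds $\Ker(\varphi)$ into an injective object $K$ of $\mathcal{Y}$, forms the pushout to produce a short exact sequence $0\to(X,Y)_{\varphi}\to(X,Q)_{\varphi_1}\to(0,N)_0\to 0$ with injective middle term, and compares cosyzygies to conclude $(X,Y)_{\varphi}\cong(0,Y)_0$ in the stable category; you instead use the canonical sequence $0\to(0,\Ker\varphi)_0\to(X,Y)_{\varphi}\to(X,\Hom_A(M,X))_1\to 0$, whose third term is projective--injective by Corollary~\ref{projective2} and Theorem~\ref{frobenius}, so the sequence splits and $(X,Y)_{\varphi}\cong(0,\Ker\varphi)_0$ stably. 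Your route is more economical (no pushout, no choice of injective envelope in $\mathcal{Y}$, no Schanuel-type comparison) and lands on a different but equally valid representative in the image of $\overline{\mathbf{q}}$; the two answers agree because $(0,\Hom_A(M,X))_0$ is itself projective. For the final step, the paper merely asserts that $\overline{\mathbf{P}}$ has the stated kernel and leaves the identification of $\mathrm{St}\ \mathcal{E}(\mathcal{X},M,\mathcal{Y})/\overline{\mathbf{q}}(\mathrm{St}\ \mathcal{Y})$ with $\mathrm{St}\ \mathcal{X}$ implicit, whereas you make it precise by observing that $\overline{\mathbf{p}}$ is a fully faithful right adjoint of $\overline{\mathbf{P}}$ (the counit of $(\mathbf{P},\mathbf{p})$ being the identity) and invoking the standard fact that a triangulated functor with a fully faithful right adjoint is a Verdier localization; this is a genuine improvement in rigor over the paper's one-line conclusion.
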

\begin{proof} Denote $\mathcal{E}(\mathcal{X},M,\mathcal{Y})$ by $\mathcal{E}$. We claim that $(X,Y)_{\varphi}$ and $(0,Y)_0$ are isomorphic in  $\mathrm{St}\ \mathcal{E}$ for any $(X,Y)_{\varphi}\in \mathcal{E}$ with $X\in \mathcal{I(X)}$. Let $0\rightarrow \Ker(\varphi)\xrightarrow{f} K\rightarrow N\rightarrow 0$ be a short exact sequence in $\mathcal{Y}$ with $K\in \mathcal{I(Y)}$. Then we obtain  the following commutative diagram:
\begin{equation*} \xymatrix{0\ar[r]&\Ker(\varphi)\ar[r]\ar[d]^{f} & Y\ar[r]^(.35){\varphi}\ar[d]&\Hom_A(M,X)\ar@{=}[d]\ar[r]&0\\
0\ar[r]& K\ar[r] &Q\ar[r]^(0.35){\varphi_1}&\Hom_A(M,X)\ar[r]&0,}
\end{equation*} where the left square is a pushout. This gives rise to a short exact sequence in $\mathcal{E}$:
$$0\rightarrow(X,Y)_{\varphi}\rightarrow (X,Q)_{\varphi_1}\rightarrow (0,N)_0\rightarrow 0. $$
The fact that $(X,Q)_{\varphi_1}\in \mathcal{I(E)}$ together with the short exact sequence   $0\rightarrow(0,Y)_{0}\rightarrow (0,Q)_{0}\rightarrow (0,N)_0\rightarrow 0$ implies $(X,Y)_{\varphi}\cong (0,Y)_0$ in $\mathrm{St}\ \mathcal{E}$, as claimed.

  By Lemma~\ref{tri}, the functor  $\mathbf{P}:\mathcal{E}(\mathcal{X},M,\mathcal{Y})\rightarrow \mathcal{X}$ induces a triangulated functor $\mathbf{\overline{P}}: \mathrm{St}\ \mathcal{E}(\mathcal{X},M,\mathcal{Y})\rightarrow \mathrm{St}\ \mathcal{X}$,
 whose Kernel is $\mathbf{\overline{q}}( \mathrm{St} \ \mathcal{Y})$ by the claim above.
\end{proof}

For a subcategory $\mathcal{S}$ of an additive category $\mathcal{T}$, we set $\mathcal{S}^{\bot}:=\{X\in \mathcal{T}|\Hom(S,X)=0, \forall S\in \mathcal{S}\}$ and $^{\bot}\mathcal{S}:=\{X\in \mathcal{T}|\Hom(X,S)=0, \forall S\in \mathcal{S}\}$. The following result is known essentially, see \cite{CPS1, CPS2}.

\begin{Lemma} \label{leftright} Let $\mathcal{S}$ be a thick triangulated subcategory of a triangulated category $\mathcal{T}$. Then the following are equivalent:

$\mathrm{(1)}$ The quotient functor $\mathbf{Q}:\mathcal{T}\rightarrow \mathcal{T/S}$ has a right (resp. left) adjoint;

$\mathrm{(2)}$ $(\mathcal{S},\mathcal{S}^{\bot})$ (resp. $(\mathcal{S}^{\bot}, \mathcal{S})$) is a torsion pair of $\mathcal{T}$;

$\mathrm{(3)}$ The embedding functor $\mathbf{i}: \mathcal{S}\rightarrow \mathcal{T}$ has a  (resp. left) right adjoint.
\end{Lemma}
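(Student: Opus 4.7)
The plan is to prove only the ``right adjoint'' version; the ``left adjoint'' version follows by passing to $\mathcal{T}^{\mathrm{op}}$, which exchanges right and left adjoints and swaps the roles of $\mathcal{S}^\bot$ and $^\bot \mathcal{S}$. I would establish the cycle $(3) \Rightarrow (2) \Rightarrow (1) \Rightarrow (3)$.

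For $(3) \Rightarrow (2)$, assume $\mathbf{i}: \mathcal{S} \hookrightarrow \mathcal{T}$ has a right adjoint $\mathbf{R}$. For each $X \in \mathcal{T}$, extend the counit $\mathbf{i}\mathbf{R}(X) \to X$ to a distinguished triangle
\[
\mathbf{i}\mathbf{R}(X) \to X \to C_X \to \mathbf{i}\mathbf{R}(X)[1].
\]
To show $C_X \in \mathcal{S}^\bot$ I would apply $\Hom_{\mathcal{T}}(\mathbf{i}(S), -)$ for $S \in \mathcal{S}$: because the embedding $\mathbf{i}$ is fully faithful, the standard triangle identities give $\mathbf{R}\mathbf{i} \cong \id_{\mathcal{S}}$ and force the counit to induce an isomorphism on $\Hom(\mathbf{i}(S), -)$, so $\Hom(\mathbf{i}(S), C_X)=0$. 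Combined with the tautological orthogonality $\Hom(\mathcal{S}, \mathcal{S}^\bot) = 0$, this triangle exhibits $(\mathcal{S}, \mathcal{S}^\bot)$ as a torsion pair.

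For $(2) \Rightarrow (1)$, fix for each $X$ a decomposition triangle $S_X \to X \to X' \to S_X[1]$ with $S_X \in \mathcal{S}$ and $X' \in \mathcal{S}^\bot$. Orthogonality makes this triangle functorial and unique up to canonical isomorphism; moreover, any morphism $X \to Y$ factoring through $\mathcal{S}$ induces the zero morphism $X' \to Y'$, so the assignment $X \mapsto X'$ descends to a functor $\mathcal{T}/\mathcal{S} \to \mathcal{T}$ with image in $\mathcal{S}^\bot$. A roof-calculus computation in the Verdier quotient, again leveraging $\Hom(\mathcal{S}, \mathcal{S}^\bot) = 0$, yields the adjunction isomorphism $\Hom_{\mathcal{T}/\mathcal{S}}(\mathbf{Q}(X), \mathbf{Q}(Y)) \cong \Hom_{\mathcal{T}}(X, Y')$. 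For $(1) \Rightarrow (3)$, given a right adjoint $\mathbf{R}'$ to $\mathbf{Q}$, extend the unit $X \to \mathbf{R}'\mathbf{Q}(X)$ to a triangle $L_X \to X \to \mathbf{R}'\mathbf{Q}(X) \to L_X[1]$; since $\mathbf{Q}\mathbf{R}' \cong \id$, applying $\mathbf{Q}$ kills $L_X$, so $L_X \in \mathcal{S}$ (using that $\mathcal{S}$ coincides with the kernel of $\mathbf{Q}$ by thickness). A routine check using $\Hom(\mathcal{S}, \mathbf{R}'\mathbf{Q}(X)) \cong \Hom(\mathbf{Q}(\mathcal{S}), \mathbf{Q}(X)) = 0$ then shows $X \mapsto L_X$ is right adjoint to $\mathbf{i}$.

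The step I expect to be hardest is $(2) \Rightarrow (1)$: verifying that $X \mapsto X'$ not only factors through the Verdier quotient but also satisfies the adjunction formula requires identifying roofs $X \leftarrow Z \to Y$ (with cone of the backward map in $\mathcal{S}$) with honest morphisms $X \to Y'$ in $\mathcal{T}$ when $Y \in \mathcal{S}^\bot$. This is the substantive classical content behind Bousfield localization. Since the lemma is already attributed in the paper to \cite{CPS1, CPS2}, I would cite those sources for this bookkeeping rather than reproducing it in detail, limiting the writeup to the conceptual steps above.
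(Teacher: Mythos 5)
Your proposal is correct in substance and proves the same statement, but it is organized differently from the paper's appendix proof. You run the cycle $(3)\Rightarrow(2)\Rightarrow(1)\Rightarrow(3)$, whereas the paper uses $(2)$ as a hub, proving $(1)\Rightarrow(2)$, $(3)\Rightarrow(2)$, $(2)\Rightarrow(1)$ and $(2)\Rightarrow(3)$ separately. Your $(3)\Rightarrow(2)$ and $(1)\Rightarrow(3)$ are essentially the paper's arguments transposed: in each case one extends the (co)unit to a triangle and uses full faithfulness of the adjoint (the paper isolates this as Lemma~\ref{faithful}) plus the adjunction isomorphism to place the third vertex in $\mathcal{S}$ or $\mathcal{S}^{\bot}$. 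The real divergence is in $(2)\Rightarrow(1)$: the paper avoids roof calculus entirely by showing that the restriction $\mathbf{Q}'\colon \mathcal{S}^{\bot}\rightarrow \mathcal{T}/\mathcal{S}$ is fully faithful (Lemma~\ref{local}) and dense (from the torsion pair), hence an equivalence, and then defines the right adjoint as $\mathbf{j}$ composed with a quasi-inverse; this is cleaner than building the adjoint object-by-object. One imprecision in your sketch of this step: to descend $X\mapsto X'$ through the \emph{Verdier} quotient it is not enough that morphisms factoring through $\mathcal{S}$ go to zero (that is the criterion for an additive quotient); you must check that morphisms whose \emph{cone} lies in $\mathcal{S}$ become isomorphisms, which is exactly where the computation $\Hom_{\mathcal{T}/\mathcal{S}}(\mathbf{Q}(X),\mathbf{Q}(Y))\cong \Hom_{\mathcal{T}}(X,Y')$ for $Y'\in\mathcal{S}^{\bot}$ is needed. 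Since you flag this as the substantive classical content and defer it to the cited sources, the gap is one of bookkeeping rather than of ideas, but the paper's route via the equivalence $\mathcal{S}^{\bot}\simeq \mathcal{T}/\mathcal{S}$ packages that bookkeeping more economically.
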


We also need the following easy result, see e.g. \cite{WL}.

\begin{Lemma} \label{adjointq}Let $(\mathbf{F,G})$ be an adjoint pair, where $\mathbf{F}:\mathcal{C}\rightarrow \mathcal{D}$ and $\mathbf{G}:\mathcal{D}\rightarrow \mathcal{C}$ are additive functors. If $\mathcal{X}$ and $\mathcal{Y}$ are additive subcategories of  $\mathcal{C}$ and $\mathcal{D}$  respectively  such that  $\mathbf{F}\mathcal{X}\subseteq \mathcal{Y}$ and $\mathbf{G}\mathcal{Y}\subseteq \mathcal{X}$, then $(\overline{\mathbf{F}},\overline{\mathbf{G}})$ with $\mathbf{\overline{F}}: \mathcal{C/X}\rightarrow \mathcal{D/Y}$ and $\mathbf{\mathbf{\overline{G}}}: \mathcal{D/Y}\rightarrow \mathcal{C/X}$ is also  an adjoint pair.
\end{Lemma}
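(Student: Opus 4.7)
The plan is to construct the adjunction on the quotient categories directly from the given one, by showing that its natural isomorphism descends. First, I will verify that $\overline{\mathbf{F}}$ and $\overline{\mathbf{G}}$ are well-defined additive functors. The quotient $\mathcal{C}/\mathcal{X}$ has the same objects as $\mathcal{C}$, and its Hom-groups are quotients of those of $\mathcal{C}$ by the subgroup of morphisms factoring through some object of $\mathcal{X}$. If $f: A \to A'$ factors as $\beta \circ \alpha$ with $\alpha: A \to X$ and $\beta: X \to A'$ for some $X \in \mathcal{X}$, then $\mathbf{F}(f) = \mathbf{F}(\beta) \circ \mathbf{F}(\alpha)$ factors through $\mathbf{F}(X) \in \mathcal{Y}$ by hypothesis, hence becomes zero in $\mathcal{D}/\mathcal{Y}$. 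So $\overline{\mathbf{F}}$ is well-defined on Hom-classes; the argument for $\overline{\mathbf{G}}$ is dual.

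The main step is to show that the adjunction isomorphism $\tau_{A,B}: \Hom_{\mathcal{D}}(\mathbf{F}(A), B) \xrightarrow{\cong} \Hom_{\mathcal{C}}(A, \mathbf{G}(B))$ restricts to an isomorphism between the subgroup of morphisms factoring through objects of $\mathcal{Y}$ and the subgroup of morphisms factoring through objects of $\mathcal{X}$. Using the standard formula $\tau_{A,B}(\phi) = \mathbf{G}(\phi) \circ \eta_A$ with $\eta$ the unit, if $\phi = \beta \circ \alpha$ factors through $Y \in \mathcal{Y}$ via $\alpha: \mathbf{F}(A) \to Y$, then $\tau_{A,B}(\phi) = \mathbf{G}(\beta) \circ (\mathbf{G}(\alpha) \circ \eta_A)$, which factors through $\mathbf{G}(Y) \in \mathcal{X}$ by hypothesis. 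Symmetrically, via the counit $\epsilon$ and the formula $\tau_{A,B}^{-1}(\psi) = \epsilon_B \circ \mathbf{F}(\psi)$, any $\psi$ factoring through $X \in \mathcal{X}$ is sent to a morphism factoring through $\mathbf{F}(X) \in \mathcal{Y}$. Hence $\tau_{A,B}$ induces an isomorphism $\overline{\tau}_{A,B}: \Hom_{\mathcal{D}/\mathcal{Y}}(\overline{\mathbf{F}}(A), B) \xrightarrow{\cong} \Hom_{\mathcal{C}/\mathcal{X}}(A, \overline{\mathbf{G}}(B))$.

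Finally, naturality of $\overline{\tau}$ in both variables is inherited from the naturality of $\tau$, since composition in the quotient categories is defined via representatives in the ambient categories, and the well-definedness verified above shows that the induced diagrams commute on cosets. I do not anticipate any serious obstacle here; the whole argument rests on the single observation that the hypotheses $\mathbf{F}\mathcal{X} \subseteq \mathcal{Y}$ and $\mathbf{G}\mathcal{Y} \subseteq \mathcal{X}$ are precisely what is needed to make the transpose formulas preserve the property of factoring through the relevant subcategory.
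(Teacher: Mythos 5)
Your argument is correct. Note that the paper itself gives no proof of this lemma, dismissing it as an ``easy result'' with a citation to \cite{WL}; your write-up supplies exactly the standard argument one would expect there: the transpose formulas $\tau(\phi)=\mathbf{G}(\phi)\circ\eta_A$ and $\tau^{-1}(\psi)=\epsilon_B\circ\mathbf{F}(\psi)$ carry morphisms factoring through $\mathcal{Y}$ to morphisms factoring through $\mathbf{G}\mathcal{Y}\subseteq\mathcal{X}$ and vice versa, so the adjunction isomorphism descends to the quotient Hom-groups, and naturality is inherited from representatives. The only point worth making explicit (which you use implicitly) is that the morphisms factoring through $\mathcal{X}$ form a subgroup of $\Hom_{\mathcal{C}}(A,A')$ precisely because $\mathcal{X}$ is an additive subcategory, so that the quotient Hom-groups are well defined; with that remark included, the proof is complete.
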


Combining the facts above we obtain:

\begin{Theorem} With the same assumptions as well as one of the equivalent statements of Theorem~\ref{frobenius},    the diagram in (\ref{recollement}) is a recollement of triangulated categories.
\end{Theorem}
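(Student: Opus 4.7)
The plan is to verify the four standard axioms of a recollement: that the three candidate adjunctions in (\ref{recollement}) descend to honest adjoint pairs on stable categories; that $\overline{\mathbf{q}}$ and $\overline{\mathbf{p}}$ are fully faithful; that $\overline{\mathbf{P}}\circ\overline{\mathbf{q}}=0$; and that $\overline{\mathbf{P}}$ admits a left adjoint, supplying the sixth, unlabeled arrow. The two distinguished triangles required by the recollement axioms will then arise formally from the units and counits of these adjunctions.

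My first step is to check that each of $\mathbf{p},\mathbf{q},\mathbf{P},\mathbf{Q},\mathbb{Q}$ descends to a triangulated functor between the relevant stable categories. By Lemma~\ref{tri} this amounts to verifying exactness and preservation of injective objects. Exactness of $\mathbf{q},\mathbf{P},\mathbf{Q},\mathbb{Q}$ was recorded just after their definition; for $\mathbf{p}$ it follows from $\mathrm{Ext}^1_A(M,\mathcal{X})=0$, which is contained in the co-resolving hypothesis by Theorem~\ref{co-resolving}. Preservation of injectives (equivalently, of projectives, since everything is Frobenius) is checked object-wise via Theorem~\ref{in1}, Corollary~\ref{projective2} and the equivalent conditions of Theorem~\ref{frobenius}; the only delicate case is $\mathbf{p}(X)=(X,\Hom_A(M,X))_1$, for which injectivity in $\mathcal{E}(\mathcal{X},M,\mathcal{Y})$ when $X\in\mathcal{I}(\mathcal{X})$ is precisely the content of $\Hom_A(M,\mathcal{I}(\mathcal{X}))\subseteq\mathcal{I}(\mathcal{Y})$, i.e.\ condition (2) of Theorem~\ref{frobenius}. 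With these checks in hand, Lemma~\ref{adjointq} lifts the three adjoint pairs of Lemma~\ref{adjoint} to adjoint pairs $(\overline{\mathbf{Q}},\overline{\mathbf{q}})$, $(\overline{\mathbf{q}},\overline{\mathbb{Q}})$ and $(\overline{\mathbf{P}},\overline{\mathbf{p}})$ on the stable level.

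Next I would read off the structural identities directly at the module level: $\mathbf{Q}\mathbf{q}(Y)=Y$, $\mathbb{Q}\mathbf{q}(Y)=\Ker(0\colon Y\to 0)=Y$, and $\mathbf{P}\mathbf{p}(X)=X$, so the units and counits making $\overline{\mathbf{q}}$ and $\overline{\mathbf{p}}$ fully faithful are already isomorphisms before stabilisation. The vanishing $\overline{\mathbf{P}}\,\overline{\mathbf{q}}=0$ is immediate from $\mathbf{P}\mathbf{q}(Y)=\mathbf{P}((0,Y)_0)=0$. Lemma~\ref{quotient} then identifies $\overline{\mathbf{P}}$, up to the equivalence $\mathrm{St}\,\mathcal{E}(\mathcal{X},M,\mathcal{Y})/\overline{\mathbf{q}}(\mathrm{St}\,\mathcal{Y})\cong \mathrm{St}\,\mathcal{X}$, with the quotient functor modulo the thick triangulated subcategory $\overline{\mathbf{q}}(\mathrm{St}\,\mathcal{Y})$. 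The inclusion of this subcategory has $\overline{\mathbf{Q}}$ as a left adjoint and $\overline{\mathbb{Q}}$ as a right adjoint, so Lemma~\ref{leftright} yields both a left and a right adjoint of $\overline{\mathbf{P}}$; the right one is $\overline{\mathbf{p}}$, while the left one provides the unlabeled arrow in (\ref{recollement}), completing the recollement.

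The main obstacle is the bookkeeping in the descent step, where one must carefully confirm that each of the five functors sends projective/injective objects to projective/injective objects; this is precisely where the equivalent conditions of Theorem~\ref{frobenius}, together with the standing hypotheses $\Hom_A(M,\mathcal{X})\subseteq\mathcal{Y}$ and $\mathcal{E}(\mathcal{X},M,\mathcal{Y})$ being co-resolving, must be unpacked. After those verifications everything proceeds formally from Lemmas~\ref{tri}, \ref{adjointq}, \ref{quotient} and \ref{leftright}.
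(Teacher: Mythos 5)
Your proposal is correct and follows essentially the same route as the paper's own (very terse) proof: Lemmas~\ref{tri} and \ref{adjointq} to descend the functors and the adjunctions of Lemma~\ref{adjoint} to the stable categories, then Lemma~\ref{quotient} to identify $\mathrm{St}\,\mathcal{X}$ with the Verdier quotient by $\overline{\mathbf{q}}(\mathrm{St}\,\mathcal{Y})$, and Lemma~\ref{leftright} to produce the remaining adjoints. The only quibble is a minor misattribution: injectivity of $\mathbf{p}(X)=(X,\Hom_A(M,X))_1$ for $X\in\mathcal{I}(\mathcal{X})$ already follows from Theorem~\ref{in1} (the structure map being an isomorphism with zero kernel), rather than being ``precisely the content'' of $\Hom_A(M,\mathcal{I}(\mathcal{X}))\subseteq\mathcal{I}(\mathcal{Y})$ --- but since all hypotheses of Theorem~\ref{frobenius} are in force this does not affect the argument.
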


\begin{proof} In view of  Lemmas~\ref{tri} and \ref{adjointq}, the functors  $\overline{\mathbf{Q}},\overline{\mathbf{q}},\overline{\mathbb{Q}}$, $\overline{\mathbf{p}}$ and $\overline{\mathbf{P}}$ are all triangulated functors and $(\overline{\mathbf{Q}},\ \overline{\mathbf{q}}),\ (\overline{\mathbf{q}}, \overline{\mathbb{Q}})$, $(\overline{\mathbf{p}},\ \overline{\mathbf{P}})$ are adjoint pairs. From these, we obtain the left side of (\ref{recollement}), and it can be completed into (\ref{recollement}) by Lemmas~\ref{quotient} and \ref{leftright}.
\end{proof}

\section{The properties of $\mathcal{M}(\mathcal{X},M,\mathcal{Y})$ \label{5}}

The theory of $\mathcal{M}(\mathcal{X},M,\mathcal{Y})$ is perfectly dual to that of   $\mathcal{E}(\mathcal{X},M,\mathcal{Y})$, whether on their results or on their proofs.  Hence, we will only present  the results on $\mathcal{M}(\mathcal{X},M,\mathcal{Y})$, but  omitting all the proofs in this section.

\begin{Lemma}  If both $\mathcal{X}$ and $\mathcal{Y}$ are closed under extensions, then $\mathcal{M}(\mathcal{X},M,\mathcal{Y})$ is closed under extensions.
\end{Lemma}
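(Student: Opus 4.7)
The plan is to mirror the proof of Proposition~\ref{extension} (the extension-closure result for $\mathcal{E}(\mathcal{X},M,\mathcal{Y})$), replacing the left-exact functor $\Hom_A(M,-)$ by the right-exact functor $M\otimes_B-$ and swapping the roles of kernel and cokernel throughout. Concretely, I start with an arbitrary short exact sequence
\begin{equation*}
0\to\begin{bmatrix}X_1\\Y_1\end{bmatrix}_{\phi_1}\xrightarrow{\begin{bmatrix}f_1\\g_1\end{bmatrix}}\begin{bmatrix}X_2\\Y_2\end{bmatrix}_{\phi_2}\xrightarrow{\begin{bmatrix}f_2\\g_2\end{bmatrix}}\begin{bmatrix}X_3\\Y_3\end{bmatrix}_{\phi_3}\to 0
\end{equation*}
in $\mathrm{Rep}(A,M,B)$ whose outer terms belong to $\mathcal{M}(\mathcal{X},M,\mathcal{Y})$, and I need to verify the three defining conditions for the middle term: $Y_2\in\mathcal{Y}$, $\phi_2$ injective, and $\Coker(\phi_2)\in\mathcal{X}$.

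The first condition is immediate, since $0\to Y_1\to Y_2\to Y_3\to 0$ is exact in $B$-Mod and $\mathcal{Y}$ is closed under extensions. For the other two I would tensor the sequence $0\to Y_1\to Y_2\to Y_3\to 0$ with $M$; because $M\otimes_B-$ is right exact (but not necessarily left exact), the result is an exact sequence $M\otimes_B Y_1\to M\otimes_B Y_2\to M\otimes_B Y_3\to 0$. Together with the bottom row $0\to X_1\to X_2\to X_3\to 0$, the maps $\phi_i$ assemble into a commutative ladder to which the Snake Lemma applies, yielding an exact sequence
\begin{equation*}
\Ker(\phi_1)\to\Ker(\phi_2)\to\Ker(\phi_3)\to\Coker(\phi_1)\to\Coker(\phi_2)\to\Coker(\phi_3)\to 0.
\end{equation*}

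Since $\phi_1$ and $\phi_3$ are injective by hypothesis, the first and third terms vanish, so exactness at $\Ker(\phi_2)$ forces $\Ker(\phi_2)=0$, giving injectivity of $\phi_2$. Exactness at $\Coker(\phi_1)$ then makes $\Coker(\phi_1)\to\Coker(\phi_2)$ injective, and since the last map is surjective, we extract a short exact sequence $0\to\Coker(\phi_1)\to\Coker(\phi_2)\to\Coker(\phi_3)\to 0$ with outer terms in $\mathcal{X}$; extension closure of $\mathcal{X}$ yields $\Coker(\phi_2)\in\mathcal{X}$, completing the verification. The only point that demands attention is that the top row of the Snake diagram is not left exact, but this is harmless: the six-term Snake sequence still produces all the exactness we need at $\Ker(\phi_2)$ and at the three cokernel positions, which is precisely where the argument is anchored.
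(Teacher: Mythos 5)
Your proof is correct and is exactly what the paper intends: the paper omits the proof in Section 4, stating that it is the precise dual of Proposition~\ref{extension}, and your argument carries out that dualization faithfully, including the correct handling of the only-right-exact top row in the Snake Lemma diagram.
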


\begin{Proposition} \label{injective2} Suppose that $\mathcal{M}(\mathcal{X},M,\mathcal{Y})$ is closed under extensions and that $\mathrm{Tor}_1^B(M,\mathcal{Y})=0$. If either $\mathcal{X}$ has enough projective objects or $M\otimes_B\mathcal{Y}\subseteq \mathcal{X}$, then for any $\begin{bmatrix}U\\V\end{bmatrix}_{\phi}\in \mathcal{M}(\mathcal{X},M,\mathcal{Y})$,  $\begin{bmatrix}U\\V\end{bmatrix}_{\phi}$ is an injective object of $\mathcal{M}(\mathcal{X},M,\mathcal{Y})$ if and only if
$U$ is an injective object of $\mathcal{X}$ and $V$ is  $\mathcal{Y}$-injective.
\end{Proposition}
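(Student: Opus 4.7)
The plan is to dualize the proof of Corollary~\ref{projective2}, which itself builds on Lemma~\ref{projective}. As a first step I would establish the dual of Lemma~\ref{projective}: assuming $\mathcal{X}$ and $\mathcal{Y}$ are closed under extensions, an object $\begin{bmatrix}U\\V\end{bmatrix}_{\phi}\in \mathcal{M}(\mathcal{X},M,\mathcal{Y})$ is injective provided (i) $U\in\mathcal{I}(\mathcal{X})$, (ii) $V$ is $\mathcal{Y}$-injective (that is, $\Hom_B(-,V)$ sends short exact sequences in $\mathcal{Y}$ to short exact sequences of abelian groups), and (iii) a lifting property on $\phi$ dual to condition~(iii) of Lemma~\ref{projective}: for every short exact sequence $0\to M\otimes_B W\xrightarrow{\psi} X\to \Coker(\psi)\to 0$ in $A\text{-Mod}$ with $W\in \mathcal{Y}$ and $\Coker(\psi)\in \mathcal{X}$, and every $g\in \Hom_B(W,V)$, there exists $f\in \Hom_A(X,U)$ with $f\psi=\phi\circ(1_M\otimes g)$. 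Conversely these three conditions are necessary whenever the dual of condition~(*) is in force, namely that every short exact sequence in $\mathcal{Y}$ extends to a short exact sequence in $\mathcal{M}(\mathcal{X},M,\mathcal{Y})$. The proof mirrors that of Lemma~\ref{projective} via reversal of arrows, interchanging $\Hom_A(M,-)$ with $M\otimes_B-$ and pullbacks with pushouts, and invoking the dual halves of Lemmas~\ref{pushout1} and~\ref{pushout2}.

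Next, the assumption $\mathrm{Tor}_1^B(M,\mathcal{Y})=0$ makes the dual of $\mathbf{p}$, i.e.\ the functor $\mathcal{Y}\to \mathcal{M}(\mathcal{X},M,\mathcal{Y})$ sending $V$ to $\begin{bmatrix}M\otimes_B V\\V\end{bmatrix}_{1}$, exact; consequently the dual of condition~(*) holds automatically. Therefore, if $\begin{bmatrix}U\\V\end{bmatrix}_{\phi}$ is injective in $\mathcal{M}(\mathcal{X},M,\mathcal{Y})$, the dualized Lemma~\ref{projective} forces $U\in \mathcal{I}(\mathcal{X})$ and $V$ to be $\mathcal{Y}$-injective.

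For the converse, I would assume $U\in \mathcal{I}(\mathcal{X})$ and $V$ is $\mathcal{Y}$-injective, and verify condition~(iii) under each alternative hypothesis. If $\mathcal{X}$ has enough projective objects, then $\mathrm{Ext}^1_A(\Coker(\psi),U)=0$ by the Long Exact Sequence Theorem, so the composite $\phi\circ(1_M\otimes g)\colon M\otimes_B W\to U$ extends along $\psi$ to a map $f\colon X\to U$, as required. If instead $M\otimes_B\mathcal{Y}\subseteq\mathcal{X}$, then the whole short exact sequence in~(iii) lies in $\mathcal{X}$, and the injectivity of $U$ in $\mathcal{X}$ provides the extension $f$ directly.

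The main obstacle is the careful bookkeeping in the dualized Lemma~\ref{projective}: one must check that every Snake Lemma step and every pullback manipulation in the original argument admits a clean dualization, with kernels becoming cokernels and the pullback in the diagram chase becoming a pushout. Once the dualized lemma is in place, the deduction of the Proposition is immediate, paralleling Corollary~\ref{projective2}.
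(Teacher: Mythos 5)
Your proposal is correct and is exactly the paper's approach: the paper gives no separate argument for Proposition~\ref{injective2}, stating instead that all of Section~4 is the formal dual of Section~3, so the intended proof is precisely the dualization of Lemma~\ref{projective} and Corollary~\ref{projective2} that you describe, with $\Tor_1^B(M,\mathcal{Y})=0$ replacing $\Ext^1_A(M,\mathcal{X})=0$ to make $V\mapsto\begin{bmatrix}M\otimes_B V\\V\end{bmatrix}_1$ exact and hence to guarantee the dual of condition~(*). One small point of hygiene (inherited from the paper's own phrasing rather than introduced by you): the strict dual of Lemma~\ref{projective} attaches ``injective object of $\mathcal{Y}$'' to $V$ (the coordinate required to lie in $\mathcal{Y}$) and ``$\mathcal{X}$-injective'' to $U$ (which need not lie in $\mathcal{X}$ unless $M\otimes_B\mathcal{Y}\subseteq\mathcal{X}$), so in the dualized lemma it is the extension of a map out of $\Coker(\phi_1)$ along $\Coker(\phi_1)\hookrightarrow\Coker(\phi_2)$ that uses the condition on $U$, and the extension of $\beta\colon Y_1\to V$ along $Y_1\hookrightarrow Y_2$ that uses the condition on $V$.
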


\begin{Proposition} \label{enoughi}  Suppose that $\mathcal{M}(\mathcal{X},M,\mathcal{Y})$ is closed under extensions and assume further that $\mathrm{Tor}_1^B(M,\mathcal{Y})=0$  and   $M\otimes_B\mathcal{Y}\subseteq \mathcal{X}$. Then $\mathcal{M}(\mathcal{X},M,\mathcal{Y})$ has enough injective objects if and only if both $\mathcal{X}$ and $\mathcal{Y}$ have enough injective objects.
\end{Proposition}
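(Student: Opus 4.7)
The plan is to dualize the argument of Theorem~\ref{enough}. For the ``only if'' direction, I would use the exact dual functors $\mathbf{P}:\mathcal{M}(\mathcal{X},M,\mathcal{Y})\to\mathcal{X}$, $\begin{bmatrix}X\\Y\end{bmatrix}_\phi\mapsto\Coker\phi$ and $\mathbf{Q}:\mathcal{M}(\mathcal{X},M,\mathcal{Y})\to\mathcal{Y}$, $\begin{bmatrix}X\\Y\end{bmatrix}_\phi\mapsto Y$, together with the exact embeddings $X\mapsto\begin{bmatrix}X\\0\end{bmatrix}_0$ of $\mathcal{X}$ and $Y\mapsto\begin{bmatrix}M\otimes Y\\Y\end{bmatrix}_1$ of $\mathcal{Y}$ into $\mathcal{M}$; the latter is exact because $\mathrm{Tor}_1^B(M,\mathcal{Y})=0$ and lands in $\mathcal{M}$ by $M\otimes_B\mathcal{Y}\subseteq\mathcal{X}$. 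Applying $\mathbf{P}$ (resp.\ $\mathbf{Q}$) to an embedding of $\begin{bmatrix}X\\0\end{bmatrix}_0$ (resp.\ $\begin{bmatrix}M\otimes Y\\Y\end{bmatrix}_1$) into an injective object of $\mathcal{M}$ and invoking Proposition~\ref{injective2} produces an embedding of $X$ (resp.\ $Y$) into an injective of $\mathcal{X}$ (resp.\ $\mathcal{Y}$).

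For the ``if'' direction, let $\begin{bmatrix}X\\Y\end{bmatrix}_\phi\in\mathcal{M}(\mathcal{X},M,\mathcal{Y})$ and set $C=\Coker\phi\in\mathcal{X}$. First I would embed $Y$ into an injective $E$ of $\mathcal{Y}$ via $0\to Y\xrightarrow{\iota} E\to N\to 0$ with $N\in\mathcal{Y}$. Since $\mathrm{Tor}_1^B(M,N)=0$, the sequence $0\to M\otimes Y\to M\otimes E\to M\otimes N\to 0$ remains exact, and I would form the pushout
\begin{equation*}
\xymatrix{
0 \ar[r] & M\otimes Y \ar[r] \ar[d]^{\phi} & M\otimes E \ar[r] \ar[d]^{\phi'} & M\otimes N \ar@{=}[d] \ar[r] & 0 \\
0 \ar[r] & X \ar[r]^{\alpha} & X' \ar[r] & M\otimes N \ar[r] & 0.
}
\end{equation*}
By Lemma~\ref{pushout1}, both $\alpha$ and $\phi'$ are injective and $\Coker\phi'\cong C$. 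The middle column $0\to M\otimes E\to X'\to C\to 0$ then shows $X'\in\mathcal{X}$, using $M\otimes E,\,C\in\mathcal{X}$ and that $\mathcal{X}$ is closed under extensions (itself a consequence of $\mathcal{M}$ being closed under extensions and the exactness of the embedding $X\mapsto\begin{bmatrix}X\\0\end{bmatrix}_0$).

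Next I would embed $X'$ into an injective $H$ of $\mathcal{X}$, giving $0\to X'\xrightarrow{e} H\to J\to 0$, and set $\phi''=e\phi':M\otimes E\to H$. The candidate injective hull is $\begin{bmatrix}H\\E\end{bmatrix}_{\phi''}$ with embedding $(e\alpha,\iota)$; the identity $\phi''(1_M\otimes\iota)=e\alpha\phi$ commutes by the pushout square, so this map is well-defined, and it is a monomorphism componentwise. The triple lies in $\mathcal{M}$ because $\phi''$ is injective and $\Coker\phi''$ is an extension of $J$ by $C$, both in $\mathcal{X}$; and it is injective in $\mathcal{M}$ by Proposition~\ref{injective2} since $H\in\mathcal{I}(\mathcal{X})$ and $E$ is $\mathcal{Y}$-injective.

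The hard part will be verifying that the cokernel $\begin{bmatrix}H/e\alpha(X)\\N\end{bmatrix}_{\bar\phi}$ of $(e\alpha,\iota)$ in $\mathrm{Rep}(A,M,B)$ actually lies in $\mathcal{M}(\mathcal{X},M,\mathcal{Y})$, so that the resulting short exact sequence lives in $\mathcal{M}$. I would apply the snake lemma to the $3\times 3$ diagram with rows $0\to M\otimes Y\to M\otimes E\to M\otimes N\to 0$ and $0\to X\to H\to H/e\alpha(X)\to 0$ and vertical maps $\phi,\phi'',\bar\phi$; this reduces the injectivity of $\bar\phi$ to the injectivity of the natural map $C\to\Coker\phi''$, which is an elementary diagram chase using the pushout, while $\Coker\bar\phi\cong J\in\mathcal{X}$ drops out of the same snake sequence. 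The entire argument is then a mechanical dualization of Theorem~\ref{enough}, with pushouts replacing pullbacks and injective hulls replacing projective covers.
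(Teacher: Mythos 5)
The paper itself gives no proof of Proposition~\ref{enoughi}: Section~4 states that all results there are ``perfectly dual'' to those of Section~3 and omits the arguments, so the intended proof is exactly the dualization of Theorem~\ref{enough} that you carry out. Your ``if'' direction is that dual, faithfully executed: the pushout of $\phi$ along $M\otimes_B\iota$ plays the role of the paper's pullback of $f$ along $\Hom_A(M,\alpha)$, the monicity of $\alpha,\phi'$ and the identification $\Coker\phi'\cong\Coker\phi$ follow from Lemma~\ref{pushout1}(2) just as the kernel statements do in the paper, and the final short exact sequence ending in the injective $\begin{bmatrix}H\\E\end{bmatrix}_{\phi''}$ is the mirror image of the sequence $0\rightarrow (K,\Ker(\beta e))_{\overline{ge}}\rightarrow (P,Q)_{ge}\rightarrow (X,Y)_f\rightarrow 0$. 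This part is correct.

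There is, however, one genuine (though easily repaired) slip in your ``only if'' direction. You define $\mathbf{P}$ as the cokernel functor $\begin{bmatrix}X\\Y\end{bmatrix}_\phi\mapsto\Coker\phi$ and claim that applying it to an embedding of $\begin{bmatrix}X\\0\end{bmatrix}_0$ into an injective object of $\mathcal{M}$ yields an embedding of $X$ into an injective of $\mathcal{X}$. But Proposition~\ref{injective2} says that for an injective object $\begin{bmatrix}U\\V\end{bmatrix}_\phi$ it is the \emph{first component} $U$ that is injective in $\mathcal{X}$, not $\Coker\phi$: the sequence $0\rightarrow M\otimes_B V\rightarrow U\rightarrow\Coker\phi\rightarrow 0$ has no reason to split under the hypotheses of this proposition (that would require something like $M\otimes_B\mathcal{I(Y)}\subseteq\mathcal{I(X)}$, which only enters in Theorem~\ref{frobeniusm}), so $\Coker\phi$ need not be injective in $\mathcal{X}$. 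The fix is to use the component functor $\begin{bmatrix}X\\Y\end{bmatrix}_\phi\mapsto X$ instead (the paper's $\mathbf{P}_M$): it is exact, it also sends $\begin{bmatrix}X\\0\end{bmatrix}_0$ to $X$, its value on an injective of $\mathcal{M}$ is injective in $\mathcal{X}$ by Proposition~\ref{injective2}, and the first component of the cokernel term lies in $\mathcal{X}$ because it is an extension of an object of $\mathcal{X}$ by $M\otimes_B V'$ with $V'\in\mathcal{Y}$, using $M\otimes_B\mathcal{Y}\subseteq\mathcal{X}$ and closure of $\mathcal{X}$ under extensions. Your treatment of the $\mathcal{Y}$-component via $\mathbf{Q}$ is already correct.
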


Let $\mathcal{C}$ be an abelian category which has enough projective objects. A full subcategory $\mathcal{D}$ of $\mathcal{C}$ is a {\it resolving} subcategory of $\mathcal{C}$ or just resolving if $\mathcal{P(C)}\subseteq \mathcal{D}$ and $\mathcal{D}$ is closed under direct summands, extensions and the kernels of surjective homomorphisms.

\begin{Theorem} \label{resolving} The following statements are equivalent:

$\mathrm{(1)}$ $\mathcal{M}(\mathcal{X},M,\mathcal{Y})$ is a resolving subcategory of $\mathrm{Rep}(A,M,B)$;

$\mathrm{(2)}$ Both $\mathcal{X}$ and $\mathcal{Y}$ are resolving and $\mathrm{Tor}_1^B(M, \mathcal{Y})=0$.
\end{Theorem}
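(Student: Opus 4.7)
The plan is to mirror the argument for Theorem~\ref{co-resolving}, swapping injective $A$-modules for projective $B$-modules, $\Hom_A(M,-)$ for $M \otimes_B -$, kernels for cokernels, with the Snake Lemma playing the central role in both directions.

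For $(1) \Rightarrow (2)$, I would first establish $\mathrm{Tor}_1^B(M, \mathcal{Y}) = 0$. Given $Y \in \mathcal{Y}$, choose a surjection $\pi\colon P \to Y$ from a projective $B$-module and set $N := \Ker(\pi)$. By Theorem~\ref{pro}, $\begin{bmatrix}M \otimes_B P\\P\end{bmatrix}_1$ is projective in $\mathrm{Rep}(A,M,B)$, so it lies in $\mathcal{M}(\mathcal{X},M,\mathcal{Y})$ because $\mathcal{M}$ is resolving. The target $\begin{bmatrix}M \otimes_B Y\\Y\end{bmatrix}_1$ also lies in $\mathcal{M}$ (its cokernel is $0 \in \mathcal{X}$), and the natural componentwise surjection between them has kernel in $\mathrm{Rep}(A,M,B)$ whose $B$-component is $N$ and whose structure map $M \otimes_B N \to \Ker(1_M \otimes \pi)$ is the restriction of $1_M \otimes (N \hookrightarrow P)$. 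Since $\mathcal{M}$ is resolving, this kernel belongs to $\mathcal{M}$, which forces its structure map to be injective; equivalently $\mathrm{Tor}_1^B(M, Y) = 0$, and along the way $N \in \mathcal{Y}$.

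With $\mathrm{Tor}_1^B(M, \mathcal{Y}) = 0$ in hand, the assignment $Y \mapsto \begin{bmatrix}M \otimes_B Y\\Y\end{bmatrix}_1$ becomes an exact functor $\mathcal{Y} \to \mathcal{M}(\mathcal{X},M,\mathcal{Y})$, while $X \mapsto \begin{bmatrix}X\\0\end{bmatrix}_0$ is always an exact functor $\mathcal{X} \to \mathcal{M}(\mathcal{X},M,\mathcal{Y})$. Combined with the exact projections onto the $A$- and $B$-components, these functors transfer closure under extensions, direct summands, and kernels of surjections from $\mathcal{M}$ to each of $\mathcal{X}$ and $\mathcal{Y}$. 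Applying Theorem~\ref{pro} to $\begin{bmatrix}P_A\\0\end{bmatrix}_0$ and $\begin{bmatrix}M \otimes_B P_B\\P_B\end{bmatrix}_1$ (both necessarily in $\mathcal{M}$) then shows that every projective $A$-module is in $\mathcal{X}$ and every projective $B$-module is in $\mathcal{Y}$, completing resolvingness of both.

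For $(2) \Rightarrow (1)$, the inclusion $\mathcal{P}(\mathrm{Rep}(A,M,B)) \subseteq \mathcal{M}(\mathcal{X},M,\mathcal{Y})$ follows directly from Theorem~\ref{pro}, and closure of $\mathcal{M}$ under direct summands and extensions is routine (the latter is the dual of Proposition~\ref{extension}). The decisive step is closure under kernels of surjections: given $0 \to \begin{bmatrix}U\\V\end{bmatrix}_\phi \to \begin{bmatrix}X\\Y\end{bmatrix}_f \to \begin{bmatrix}X'\\Y'\end{bmatrix}_{f'} \to 0$ in $\mathrm{Rep}(A,M,B)$ with middle and right terms in $\mathcal{M}$, the hypothesis $\mathrm{Tor}_1^B(M, Y') = 0$ makes the top row of the natural $3 \times 3$ diagram (comparing $M \otimes_B (-)$ to $\phi, f, f'$) short exact; the Snake Lemma then forces $\Ker(\phi) = 0$ from $\Ker(f) = \Ker(f') = 0$ and yields an exact sequence $0 \to \Coker(\phi) \to \Coker(f) \to \Coker(f') \to 0$, so $\Coker(\phi) \in \mathcal{X}$ since $\mathcal{X}$ is closed under kernels of surjections. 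The main obstacle is the bookkeeping around this pair of $3 \times 3$ diagrams; the key observation is that injectivity of the structure map of the kernel object in $(1) \Rightarrow (2)$ coincides exactly with the Tor-vanishing condition, which lets one diagram do double duty in both directions of the equivalence.
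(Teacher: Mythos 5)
Your proposal is correct and is essentially the paper's own argument: the paper omits the proof of Theorem~\ref{resolving} precisely because it is the formal dual of the proof of Theorem~\ref{co-resolving}, and your dualization (projective covers in place of injective envelopes, $M\otimes_B-$ in place of $\Hom_A(M,-)$, the Snake Lemma in both directions, and the identification of injectivity of the kernel object's structure map with $\mathrm{Tor}_1^B(M,Y)=0$) carries that out faithfully. No substantive gaps.
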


A full subcategory $\mathcal{Y}$ of $B\text{-}$Mod is called $M^{\bot}$-{\it resolving}   if it is  resolving and $\mathrm{Tor}_1^B(M,Y)=0$ for any $Y\in \mathcal{Y}$.

\begin{Proposition} Set $\mathbb{Y}:=\{Y\in B\mathrm{\text{-}Mod}|\Tor_i^B(M,Y)=0,  \forall i\geq 1\}$. Then

$\mathrm{(1)}$  $\mathbb{Y}$ is  an $M^{\bot}$-resolving subcategory;

$\mathrm{(2)}$  Any $M^{\bot}$-resolving subcategory of $B$-Mod  is included in $\mathbb{Y}$;

$\mathrm{(3)}$ $\mathcal{M}(A,M,\mathbb{Y})$ is the largest resolving subcategory of $\mathrm{Rep}(A,M,B)$ of the form $\mathcal{M}(\mathcal{X},M,\mathcal{Y})$;

$\mathrm{(4)}$ $\mathcal{M}(A,M,B)$ is a resolving  subcategory of $\mathrm{Rep}(A,M,B)$  if and only if $M$ is a flat $B$-module.
\end{Proposition}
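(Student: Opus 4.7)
The proof is the precise dual of the one for the $M^{\bot}$-co-resolving subcategory $\mathbb{X}$ in Section~3, so I would model my plan on that argument, swapping $\Hom_A(M,-)$ and $\Ext^i_A(M,-)$ for $M\otimes_B -$ and $\Tor_i^B(M,-)$ and reversing the direction of short exact sequences (kernels of surjections in place of cokernels of injections).

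For part (1), I will check the four closure properties and the Tor-vanishing condition separately. That $\mathbb{Y}$ contains every projective $B$-module is immediate, since $\Tor_i^B(M,P)=0$ for $i\ge 1$ whenever $P$ is projective. Closure under direct summands follows from the fact that $\Tor_i^B(M,-)$ commutes with finite direct sums, so each $\Tor_i^B(M,-)$ applied to a summand is a summand of a zero module. For closure under extensions and kernels of surjections, given a short exact sequence $0\to Y'\to Y\to Y''\to 0$ in $B$-Mod I will invoke the long exact sequence
\begin{equation*}
\cdots\to \Tor_{i+1}^B(M,Y'')\to \Tor_i^B(M,Y')\to \Tor_i^B(M,Y)\to \Tor_i^B(M,Y'')\to \cdots
\end{equation*}
If the two outer terms of the short exact sequence belong to $\mathbb{Y}$ then so does the middle one; if $Y,Y''\in\mathbb{Y}$ then every $\Tor_i^B(M,Y')$ for $i\ge 1$ is sandwiched between two zero modules and hence vanishes. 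The condition $\Tor_1^B(M,Y)=0$ for $Y\in\mathbb{Y}$ is tautological.

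For part (2), let $\mathcal{Y}$ be any $M^{\bot}$-resolving subcategory of $B$-Mod and fix $Y\in\mathcal{Y}$. I take a projective resolution $\cdots\to P_1\to P_0\to Y\to 0$ and let $\Omega^j Y$ denote the $j$th syzygy. Since $\mathcal{Y}$ is closed under kernels of surjections and contains all projectives, an induction on $j$ gives $\Omega^j Y\in\mathcal{Y}$ for every $j\ge 0$. Dimension shifting then yields $\Tor_i^B(M,Y)\cong \Tor_1^B(M,\Omega^{i-1} Y)=0$ for all $i\ge 1$, using the hypothesis that $\Tor_1^B(M,-)$ vanishes on $\mathcal{Y}$. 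Hence $Y\in\mathbb{Y}$.

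Parts (3) and (4) are essentially formal consequences. By Theorem~\ref{resolving}, a subcategory of the form $\mathcal{M}(\mathcal{X},M,\mathcal{Y})$ is resolving if and only if $\mathcal{X}$ is resolving in $A$-Mod, $\mathcal{Y}$ is resolving in $B$-Mod, and $\Tor_1^B(M,\mathcal{Y})=0$; the last two conditions mean exactly that $\mathcal{Y}$ is $M^{\bot}$-resolving, hence $\mathcal{Y}\subseteq\mathbb{Y}$ by (2), while $\mathcal{X}\subseteq A$-Mod is the largest resolving choice. Combined with part (1) (which ensures $\mathcal{M}(A,M,\mathbb{Y})$ really is resolving) this gives (3). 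For (4), applying (3) to $\mathcal{M}(A,M,B)=\mathcal{M}(A,M,B\text{-Mod})$ shows it is resolving if and only if $\mathbb{Y}=B$-Mod, which is equivalent to $\Tor_i^B(M,Y)=0$ for every $Y$ and every $i\ge 1$, i.e.\ to $M$ being a flat $B$-module. The only genuine work is in parts (1) and (2); there is no real obstacle, but one must be careful that closure of $\mathcal{Y}$ under the \emph{kernels} of surjections (not cokernels of injections) is what enables the syzygy induction in (2).
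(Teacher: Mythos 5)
Your proof is correct and follows essentially the same route as the paper: the paper proves the dual statement for $\mathbb{X}$ in Section~3 (injectives lie in $\mathbb{X}$ by definition of $\Ext^i_A(M,-)$, summand-closure via compatibility with finite direct sums, extension/(co)kernel-closure via the long exact sequence, and dimension shifting along a resolution for the maximality claim) and explicitly omits the dual proofs in Section~4. Your dualization --- projective resolutions, syzygies, and $\Tor_i^B(M,Y)\cong\Tor_1^B(M,\Omega^{i-1}Y)$ --- matches that argument step for step.
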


\begin{Theorem}\label{frobeniusm} Suppose that $\mathcal{M}(\mathcal{X},M,\mathcal{Y})$ is a resolving subcategory of  $\mathrm{Rep}(A,M,B)$ and that  $M\otimes_B \mathcal{Y}\subseteq \mathcal{X}$. Then the following statements are equivalent:

$\mathrm{(1)}$ $\mathcal{M}(\mathcal{X},M,\mathcal{Y})$ is a Frobenius category;

$\mathrm{(2)}$ Both $\mathcal{X}$ and $\mathcal{Y}$ are Frobenius categories and $M\otimes_B \mathcal{I(Y)}\subseteq \mathcal{I(X)}$;

$\mathrm{(3)}$ Both $\mathcal{X}$ and $\mathcal{Y}$ are Frobenius categories and $M\otimes_B \mathcal{P(Y)}\subseteq \mathcal{P(X)}$.

\end{Theorem}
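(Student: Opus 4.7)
The plan is to mirror the proof of Theorem~\ref{frobenius} under the dualities $\Hom_A(M,-)\leftrightarrow M\otimes_B-$, injective $\leftrightarrow$ projective, and kernel $\leftrightarrow$ cokernel. Write $\mathcal{M}$ for $\mathcal{M}(\mathcal{X},M,\mathcal{Y})$. Two preliminary classifications drive everything. First, dually to the observation opening the proof of Theorem~\ref{frobenius}, the projective objects of a resolving subcategory $\mathcal{C}$ of $R$-Mod coincide with the projective $R$-modules lying in $\mathcal{C}$: any such object admits a projective $R$-module cover inside $\mathcal{C}$ (the cover lies in $\mathcal{C}$ since $\mathcal{C}$ contains all projective $R$-modules, with kernel in $\mathcal{C}$ by resolvability), and the resulting short exact sequence splits. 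Combined with Theorem~\ref{pro}, this gives that $\begin{bmatrix}U\\V\end{bmatrix}_\phi\in\mathcal{P}(\mathcal{M})$ iff $V\in\mathcal{P}(\mathcal{Y})$, $\phi$ is injective, and $\mathrm{Coker}(\phi)\in\mathcal{P}(\mathcal{X})$. Second, Proposition~\ref{injective2} classifies the injective objects of $\mathcal{M}$ as those $\begin{bmatrix}U\\V\end{bmatrix}_\phi$ with $U\in\mathcal{I}(\mathcal{X})$ and $V$ being $\mathcal{Y}$-injective; since $V\in\mathcal{Y}$, this coincides with $V\in\mathcal{I}(\mathcal{Y})$.

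The equivalence (2)$\Longleftrightarrow$(3) is immediate from the definition of a Frobenius category. For (1)$\Longrightarrow$(2), feed the two classifications into $\mathcal{P}(\mathcal{M})=\mathcal{I}(\mathcal{M})$: testing against $\begin{bmatrix}X\\0\end{bmatrix}_0$ yields $\mathcal{P}(\mathcal{X})=\mathcal{I}(\mathcal{X})$, and testing against $\begin{bmatrix}M\otimes_B V\\V\end{bmatrix}_1$ for $V\in\mathcal{P}(\mathcal{Y})$ yields $\mathcal{P}(\mathcal{Y})\subseteq\mathcal{I}(\mathcal{Y})$ together with $M\otimes_B\mathcal{P}(\mathcal{Y})\subseteq\mathcal{I}(\mathcal{X})$. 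The remaining inclusion $\mathcal{I}(\mathcal{Y})\subseteq\mathcal{P}(\mathcal{Y})$ is obtained by the dual construction to the one in Theorem~\ref{frobenius}: Proposition~\ref{enoughi} (applicable because $\mathrm{Tor}_1^B(M,\mathcal{Y})=0$ via Theorem~\ref{resolving}) gives $\mathcal{X}$ enough injectives, so for $V\in\mathcal{I}(\mathcal{Y})$ one chooses a short exact sequence $0\to M\otimes_B V\xrightarrow{\phi}I\to N\to 0$ in $\mathcal{X}$ with $I\in\mathcal{I}(\mathcal{X})$; the triple $\begin{bmatrix}I\\V\end{bmatrix}_\phi$ then lies in $\mathcal{M}$, is injective by Proposition~\ref{injective2}, hence projective, forcing $V\in\mathcal{P}(\mathcal{Y})$.

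For (2)$\Longrightarrow$(1) I will verify $\mathcal{P}(\mathcal{M})=\mathcal{I}(\mathcal{M})$; enough injectives then come from Proposition~\ref{enoughi}, while enough projectives are automatic because a projective cover in $\mathrm{Rep}(A,M,B)$ of any object of $\mathcal{M}$ already lies in the resolving subcategory $\mathcal{M}$, with kernel in $\mathcal{M}$. For $\mathcal{P}(\mathcal{M})\subseteq\mathcal{I}(\mathcal{M})$: if $\begin{bmatrix}U\\V\end{bmatrix}_\phi$ is projective, then $V\in\mathcal{P}(\mathcal{Y})=\mathcal{I}(\mathcal{Y})$, $\mathrm{Coker}(\phi)\in\mathcal{P}(\mathcal{X})=\mathcal{I}(\mathcal{X})$, and $M\otimes_B V\in M\otimes_B\mathcal{P}(\mathcal{Y})\subseteq\mathcal{P}(\mathcal{X})=\mathcal{I}(\mathcal{X})$, so the extension $U$ lies in $\mathcal{I}(\mathcal{X})$ by closure of injectives under extensions in the Frobenius category $\mathcal{X}$. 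For $\mathcal{I}(\mathcal{M})\subseteq\mathcal{P}(\mathcal{M})$: if $\begin{bmatrix}U\\V\end{bmatrix}_\phi$ is injective, then $U\in\mathcal{I}(\mathcal{X})=\mathcal{P}(\mathcal{X})$, $V\in\mathcal{I}(\mathcal{Y})=\mathcal{P}(\mathcal{Y})$, and $M\otimes_B V\in\mathcal{I}(\mathcal{X})$, so the short exact sequence $0\to M\otimes_B V\to U\to\mathrm{Coker}(\phi)\to 0$ in $\mathcal{X}$ splits, exhibiting $\mathrm{Coker}(\phi)$ as a direct summand of the projective $U$, whence $\mathrm{Coker}(\phi)\in\mathcal{P}(\mathcal{X})$.

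The main obstacle is bookkeeping rather than a new idea: one must keep straight that $\mathcal{Y}$-injectivity coincides with membership in $\mathcal{I}(\mathcal{Y})$ for objects already in $\mathcal{Y}$, that the hypotheses of Propositions~\ref{injective2} and~\ref{enoughi} are available throughout (they follow from the resolving assumption and the standing hypothesis $M\otimes_B\mathcal{Y}\subseteq\mathcal{X}$), and that the short exact sequences produced along the way genuinely sit inside the subcategories claimed. Once these identifications are clear, the argument is a faithful dualization of Theorem~\ref{frobenius}.
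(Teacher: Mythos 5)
Your proof is correct and is essentially the paper's own argument: Section 4 deliberately omits the proof on the grounds that it is the dual of Theorem~\ref{frobenius}, and your write-up carries out exactly that dualization (classifying $\mathcal{P}(\mathcal{M})$ via Theorem~\ref{pro} plus the dual of the co-resolving observation, and $\mathcal{I}(\mathcal{M})$ via Proposition~\ref{injective2}). The bookkeeping points you flag --- that $\mathcal{Y}$-injectivity together with membership in $\mathcal{Y}$ amounts to membership in $\mathcal{I}(\mathcal{Y})$, and that the hypotheses of Propositions~\ref{injective2} and~\ref{enoughi} are supplied by the resolving assumption via Theorem~\ref{resolving} --- are all handled correctly.
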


\begin{Theorem} Under the same assumptions together with one of the equivalent statements of Theorem~\ref{frobeniusm}, we have the following recollement of triangulated categories.
\begin{equation} \tag{Fig 3} \xymatrix{\mathrm{St}\ \mathcal{X}\ar@<0.1ex>[rrr]|-{\overline{\mathbf{p_M}}} &&&  \mathrm{St}\ \mathcal{M}(\mathcal{X},M,\mathcal{Y})\ar@<1.6ex>@/^/[lll]^-{\overline{\mathbf{P_M}}}\ar@<-1.8ex>@/_/[lll]_-{\overline{\mathbb{P}_M}}\ar@<0.1ex>[rrr]|{\overline{\mathbf{Q_M}}}& & &\mathrm{ \mathrm{St}}\ \mathcal{Y} \ar@<1.6ex>@/^/[lll]\ar@<-1.8ex>@/_/[lll]_-{^{\overline{\mathbf{q_M}}}}.}
\end{equation}
\end{Theorem}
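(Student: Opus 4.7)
The proof plan is to dualize, step by step, the recollement theorem for $\mathcal{E}(\mathcal{X},M,\mathcal{Y})$. First I would introduce the natural five functors $\mathbf{p_M}(X)=\begin{bmatrix}X\\0\end{bmatrix}_0$, $\mathbf{q_M}(Y)=\begin{bmatrix}M\otimes_B Y\\Y\end{bmatrix}_1$, $\mathbf{P_M}\begin{bmatrix}X\\Y\end{bmatrix}_\phi=X$, $\mathbb{P}_M\begin{bmatrix}X\\Y\end{bmatrix}_\phi=\Coker(\phi)$ and $\mathbf{Q_M}\begin{bmatrix}X\\Y\end{bmatrix}_\phi=Y$, and verify, by routine Hom-bijection computations dual to those in Lemma~\ref{adjoint}, the three adjoint pairs $(\mathbb{P}_M,\mathbf{p_M})$, $(\mathbf{p_M},\mathbf{P_M})$ and $(\mathbf{q_M},\mathbf{Q_M})$.

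Next, I would check that each of these functors is exact and sends injective objects to injective objects. Exactness of $\mathbf{p_M}, \mathbf{q_M}, \mathbf{P_M}$ and $\mathbf{Q_M}$ is immediate from the definitions; exactness of $\mathbb{P}_M$ follows from the Snake Lemma combined with $\Tor_1^B(M,\mathcal{Y})=0$, a consequence of the resolving hypothesis via Theorem~\ref{resolving}. Preservation of injectives uses the explicit description of $\mathcal{I}(\mathcal{M}(\mathcal{X},M,\mathcal{Y}))$ dual to Theorem~\ref{frobenius}, together with the Frobenius condition $M\otimes_B \mathcal{I}(\mathcal{Y})\subseteq \mathcal{I}(\mathcal{X})$ supplied by Theorem~\ref{frobeniusm}. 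Lemma~\ref{tri} then promotes all five functors to triangulated functors on the stable categories, and Lemma~\ref{adjointq} lifts the three adjoint pairs to $\mathrm{St}$.

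The key non-formal step is the dual of Lemma~\ref{quotient}: the triangulated equivalence $\mathrm{St}\ \mathcal{M}(\mathcal{X},M,\mathcal{Y})/\overline{\mathbf{p_M}}(\mathrm{St}\ \mathcal{X})\cong \mathrm{St}\ \mathcal{Y}$ induced by $\overline{\mathbf{Q_M}}$. The central claim to establish is that, for $\begin{bmatrix}X\\Y\end{bmatrix}_\phi\in \mathcal{M}(\mathcal{X},M,\mathcal{Y})$ with $Y\in \mathcal{I}(\mathcal{Y})$, the canonical short exact sequence
\begin{equation*}
0\to \mathbf{q_M}(Y)\xrightarrow{(\phi,\mathrm{id}_Y)} \begin{bmatrix}X\\Y\end{bmatrix}_\phi \xrightarrow{(\pi,0)} \mathbf{p_M}(\Coker(\phi))\to 0
\end{equation*}
splits in $\mathcal{M}(\mathcal{X},M,\mathcal{Y})$: because the Frobenius hypothesis forces $M\otimes_B Y\in \mathcal{I}(\mathcal{X})$, the underlying sequence $0\to M\otimes_B Y\to X\to \Coker(\phi)\to 0$ splits in $\mathcal{X}$, and combining any retraction of $\phi$ with $\mathrm{id}_Y$ produces a retraction in $\mathcal{M}$. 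Since $\mathbf{q_M}(Y)\in \mathcal{I}(\mathcal{M})=\mathcal{P}(\mathcal{M})$ vanishes in the stable category, one obtains $\begin{bmatrix}X\\Y\end{bmatrix}_\phi\cong \mathbf{p_M}(\Coker(\phi))$ in $\mathrm{St}\ \mathcal{M}$, identifying $\Ker(\overline{\mathbf{Q_M}})$ with $\overline{\mathbf{p_M}}(\mathrm{St}\ \mathcal{X})$ and yielding the triangulated equivalence via $\overline{\mathbf{Q_M}}$.

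Finally, since $\overline{\mathbf{p_M}}$ is fully faithful (a consequence of the adjunctions $(\mathbb{P}_M,\mathbf{p_M})$ and $(\mathbf{p_M},\mathbf{P_M})$ together with the exactness of $\mathbf{p_M}$), its essential image is a thick triangulated subcategory of $\mathrm{St}\ \mathcal{M}$; so Lemma~\ref{leftright} provides the missing left adjoint of $\overline{\mathbf{Q_M}}$, namely the unlabeled top-right arrow in Fig~3, completing the recollement. The main obstacle is the splitting argument in the previous paragraph; every other step is a routine dualization of the corresponding computation for $\mathcal{E}(\mathcal{X},M,\mathcal{Y})$.
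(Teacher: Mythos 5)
Your proposal is correct and follows the route the paper itself intends: Section~4 states this theorem with no proof, declaring the whole theory of $\mathcal{M}(\mathcal{X},M,\mathcal{Y})$ to be the formal dual of Section~3, so the expected argument is precisely the dualization you carry out (the adjoint pairs dual to Lemma~\ref{adjoint}, then Lemmas~\ref{tri} and \ref{adjointq} to pass to stable categories, then the dual of Lemma~\ref{quotient} and finally Lemma~\ref{leftright}). The one place where you genuinely deviate is the dual of Lemma~\ref{quotient}: the paper identifies the kernel of the quotient functor by forming a pushout along an embedding of $\Ker(\varphi)$ into an injective object and then comparing two conflations with the same third term, whereas you split the canonical conflation $0\to \mathbf{q}_M(Y)\to \begin{bmatrix}X\\Y\end{bmatrix}_{\phi}\to\mathbf{p}_M(\Coker(\phi))\to 0$ outright, using $M\otimes_B\mathcal{I}(\mathcal{Y})\subseteq\mathcal{I}(\mathcal{X})$ to split $0\to M\otimes_BY\to X\to\Coker(\phi)\to 0$ in $\mathcal{X}$ and pairing the retraction $r$ with $\mathrm{id}_Y$; this is valid (the compatibility condition for $(r,\mathrm{id}_Y)$ is exactly $r\phi=1$) and is arguably cleaner, since it exhibits every kernel object as $\mathbf{p}_M(\Coker(\phi))$ plus a projective--injective summand in one step. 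Two small slips to fix: the unlabeled arrow in Fig~3 is the \emph{right} adjoint of $\overline{\mathbf{Q}_M}$ (its left adjoint is already $\overline{\mathbf{q}_M}$, coming from the pair $(\mathbf{q}_M,\mathbf{Q}_M)$), so Lemma~\ref{leftright} must be invoked in its ``right adjoint'' form via the right adjoint $\overline{\mathbf{P}_M}$ of the embedding $\overline{\mathbf{p}_M}$; and exactness of $\mathbf{q}_M$ is not immediate from the definition but, like that of $\mathbb{P}_M$, requires $\Tor_1^B(M,\mathcal{Y})=0$, which Theorem~\ref{resolving} does supply under the standing resolving hypothesis.
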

\noindent Here, the  functors $\overline{\mathbb{P}_M}$, $\overline{\mathbf{p}_M}$,  $\overline{\mathbf{P}_M}$,   $\overline{\mathbf{Q}_M}$  $\overline{\mathbf{q}_M}$ are induced by the following ones respectively:

 $\mathbb{P}_M: \mathcal{M}(\mathcal{X},M,\mathcal{Y})\rightarrow \mathcal{X}$, $\begin{bmatrix}X\\Y\end{bmatrix}_{\varphi}\longmapsto \Coker(\varphi)$;

$\mathbf{p}_M:\mathcal{X}\rightarrow \mathcal{M}(\mathcal{X},M,\mathcal{Y})$,  $X\longmapsto \begin{bmatrix}X\\0\end{bmatrix}_{0}$;

$\mathbf{P}_M: \mathcal{M}(\mathcal{X},M,\mathcal{Y})\rightarrow \mathcal{X}$, $\begin{bmatrix}X\\Y\end{bmatrix}_{\varphi}\longmapsto X$;

$\mathbf{Q}_M: \mathcal{M}(\mathcal{X},M,\mathcal{Y})\rightarrow \mathcal{Y}$, $\begin{bmatrix}X\\Y\end{bmatrix}_{\varphi}\longmapsto Y$;

$\mathbf{q}_M:\mathcal{Y}\rightarrow \mathcal{M}(\mathcal{X},M,\mathcal{Y})$, $Y\longmapsto \begin{bmatrix}M\otimes_B Y\\Y\end{bmatrix}_{1}$.

\section{Applications to Comma Categories}

Assume that $\mathcal{A}$ and $\mathcal{B}$ are  abelian categories and let $\mathbf{F} : \mathcal{A}\rightarrow \mathcal{B}$ and $\mathbf{G}: \mathcal{B}\rightarrow \mathcal{A}$ be  additive functors.
The comma category $(\mathcal{B}, \mathbf{F}\mathcal{A })$ is defined as follows:

$\mathrm{obj}(\mathcal{B}, \mathbf{F}\mathcal{A } )$: triples $(B, \mathbf{F}A)_f $, where $A \in \mathcal{A}$ , $B \in \mathcal{B}$ and $f: B  \rightarrow \mathbf{F}A\in  \mathrm{Mor}\mathcal{B}$;

$\mathrm{Mor}(\mathcal{B}, \mathbf{F}\mathcal{A })$: pairs $(\alpha,\beta) : (B, \mathbf{F}A)_f  \rightarrow (B', \mathbf{F}A')_{f'} $, where $\alpha\in \Hom_{\mathcal{A}}(A,A')$ and $\beta\in  \Hom_{\mathcal{B}}(B,B')$ and the following diagram is commutative:
\begin{equation*}\xymatrix  {B\ar[rr]^{f}\ar[d]^{\beta}&&\mathbf{F}A\ar[d]^{\mathbf{F}\alpha}\\
B'\ar[rr]^{f'}&&\mathbf{F}A' .}
\end{equation*}

Dually, the comma  category $(\mathbf{G}\mathcal{B }, \mathcal{A})$ is defined as follows:

$\mathrm{obj}(\mathbf{G}\mathcal{B }, \mathcal{A} )$: triples $(\mathbf{G}B,A)_f$, where $A \in \mathcal{A}$ , $B \in \mathcal{B}$ and $f:\mathbf{G}B \rightarrow A \in  \mathrm{Mor}\mathcal{A}$;

$\mathrm{Mor}(\mathbf{G}\mathcal{B }, \mathcal{A})$: pairs $(\alpha,\beta): (\mathbf{G}B,A)_f \rightarrow (\mathbf{G}B',A')_{f'} $, where $\alpha\in \Hom_{\mathcal{A}}(A,A')$ and $\beta\in  \Hom_{\mathcal{B}}(B,B')$ and the following diagram is commutative:
\begin{equation*}\xymatrix  {\mathbf{G}B\ar[d]^{\mathbf{G}\beta}\ar[rr]^{f}&&A\ar[d]^{\alpha}\\
\mathbf{G}B'\ar[rr]^{f'}&&A'. }
\end{equation*}

The following result is well-known, see e.g.  \cite{FGR}.

\begin{Lemma}  {\rm(1)} If $\mathbf{F}$ is left exact, then $(\mathcal{B}, \mathbf{F}\mathcal{A } )$ is an abelian category.

 {\rm(2)}  If $\mathbf{G}$ is right exact, then $(\mathbf{G}\mathcal{B }, \mathcal{A})$ is an abelian category.
\end{Lemma}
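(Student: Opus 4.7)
The plan is to verify the axioms of an abelian category in the comma category $(\mathcal{B}, \mathbf{F}\mathcal{A})$ directly; case (2) then follows by a dual argument, with the role of left exactness of $\mathbf{F}$ being played by right exactness of $\mathbf{G}$.

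First I would set up the additive structure. Since $\mathbf{F}$ is additive, $\mathbf{F}(0)=0$ and $\mathbf{F}(A\oplus A')\cong \mathbf{F}A\oplus \mathbf{F}A'$, so one takes the zero object to be $(0,\mathbf{F}0)_0$ and defines the biproduct of $(B,\mathbf{F}A)_f$ and $(B',\mathbf{F}A')_{f'}$ componentwise as $(B\oplus B',\mathbf{F}(A\oplus A'))_{(f,f')}$. The universal properties are inherited from $\mathcal{A}$ and $\mathcal{B}$ and the additivity of $\mathbf{F}$.

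Next, for a morphism $(\alpha,\beta)\colon (B,\mathbf{F}A)_f\to (B',\mathbf{F}A')_{f'}$, I would construct kernels and cokernels coordinatewise. For the cokernel, let $c_A\colon A'\to C_A$ and $c_B\colon B'\to C_B$ be the cokernels of $\alpha$ and $\beta$ in $\mathcal{A}$ and $\mathcal{B}$. The composition $\mathbf{F}c_A\circ f'$ vanishes on the image of $\beta$, since $\mathbf{F}c_A\circ f'\circ \beta=\mathbf{F}(c_A\alpha)\circ f=0$, and therefore factors uniquely as $\bar f\circ c_B$ for some $\bar f\colon C_B\to \mathbf{F}C_A$; the triple $(C_B,\mathbf{F}C_A)_{\bar f}$ with the morphism $(c_A,c_B)$ is readily checked to be a cokernel. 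Note that this step does not use left exactness. For the kernel, let $k_A\colon K_A\to A$ and $k_B\colon K_B\to B$ be kernels. Since $\mathbf{F}\alpha\circ f\circ k_B=f'\circ \beta\circ k_B=0$, the morphism $f\circ k_B$ lands in $\Ker(\mathbf{F}\alpha)$. This is exactly where the hypothesis enters: left exactness of $\mathbf{F}$ yields $\Ker(\mathbf{F}\alpha)\cong \mathbf{F}K_A$, so there is a unique $g\colon K_B\to \mathbf{F}K_A$ with $\mathbf{F}k_A\circ g=f\circ k_B$, and $(K_B,\mathbf{F}K_A)_g$ together with $(k_A,k_B)$ is the kernel.

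Finally, to conclude that $(\mathcal{B},\mathbf{F}\mathcal{A})$ is abelian, I would verify that the canonical morphism from the coimage to the image of $(\alpha,\beta)$ is an isomorphism. Computing both via the constructions above, the canonical comparison morphism has both coordinates equal to the classical coimage-to-image maps in $\mathcal{A}$ and $\mathcal{B}$, which are isomorphisms; and a morphism in the comma category is an isomorphism iff both of its coordinates are. The main obstacle, and the only genuinely delicate point, is pinpointing where left exactness is actually used: it is not needed for additive structure, cokernels, or the coimage-to-image comparison, but solely to guarantee that the natural candidate for a kernel takes the form $(\,\cdot\,,\mathbf{F}(\,\cdot\,))_{\cdot}$, i.e.\ that $\mathbf{F}$ carries $\Ker\alpha$ to $\Ker(\mathbf{F}\alpha)$. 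Part (2) is dual: kernels are built componentwise with no hypothesis on $\mathbf{G}$, while the cokernel construction requires $\mathbf{G}$ to preserve cokernels, i.e.\ right exactness.
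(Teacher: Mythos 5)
Your argument is correct: the componentwise construction of biproducts, kernels, and cokernels, the observation that left exactness of $\mathbf{F}$ is needed precisely so that the kernel object again has the form $(K_B,\mathbf{F}K_A)_g$ (while cokernels come for free), and the coordinatewise coimage-to-image comparison together give a complete proof, and the dual reasoning handles part (2). The paper itself offers no proof of this lemma, only the remark that it is well known with a citation to Fossum--Griffith--Reiten; your direct verification is exactly the standard argument found there, so there is nothing to compare beyond noting that you have supplied the details the paper omits.
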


In general it is difficult to identify projective objects or injective objects of a comma category. In this section we  point out that some proofs  in previous sections can be transplanted into  the case of comma categories  and this allow us to classify projective objects of $\mathcal{E}(\mathcal{Y},\mathbf{F}\mathcal{X})$ and injective objects of $\mathcal{M}(\mathbf{G}\mathcal{Y},\mathcal{X})$ when $\mathbf{F}$ and $\mathbf{G}$ are  exact functors, where $\mathcal{E}(\mathcal{Y},\mathbf{F}\mathcal{X})$ and $\mathcal{M}(\mathbf{G}\mathcal{Y},\mathcal{X})$ are   exact subcategories of comma categories which are defined below.

\begin{Definition} \label{ME} \em Let $\mathcal{X}$ (resp. $\mathcal{Y}$) be full subcategories of $\mathcal{A}$ (resp. $\mathcal{B}$) closed under isomorphisms and containing $0$. By definition, $\mathcal{E}(\mathcal{Y},\mathbf{F}\mathcal{X})$ is a full subcategories of $(\mathcal{B},\mathbf{F}\mathcal{A})$ whose objects are $(Y,FX)_f$ such that $f$ is surjective, $X\in \mathcal{X}$ and $\Ker(f)\in \mathcal{Y}$;  and $\mathcal{M}(\mathbf{G}\mathcal{Y},\mathcal{X})$ is a full subcategories of $(\mathbf{G}\mathcal{B},\mathcal{A})$ whose objects are $(\mathbf{G}Y,X)_f$ such that $f$ is injective, $Y\in \mathcal{Y}$ and $\Coker(f)\in \mathcal{X}$.

\end{Definition}

 We now present the results of this section.

\begin{Proposition} \label{E} Suppose that $\mathbf{F}$ is left exact and both $\mathcal{X}$ and $\mathcal{Y}$ are closed under extensions.

{\rm (1)} $\mathcal{E}(\mathcal{Y},\mathbf{F}\mathcal{X})$ is closed under extensions;

{\rm (2)} If assume further that  $\mathbf{F}$ is exact and $\mathbf{F}\mathcal{X}\subseteq \mathcal{Y}$,  then

{\rm (i)}  a triple $(Y,\mathbf{F}X)_f$ is a projective object of $\mathcal{E}(\mathcal{Y},\mathbf{F}\mathcal{X})$ if and only if $X$ is a projective object of $\mathcal{X}$ and $Y$ is a projective object of $\mathcal{Y}$;

{\rm (ii)}  $\mathcal{E}(\mathcal{Y},\mathbf{F}\mathcal{X})$ has enough projective objects if and only if both $\mathcal{X}$ and $\mathcal{Y}$ have enough projective objects.
\end{Proposition}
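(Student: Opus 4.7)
The three statements are direct analogues of Proposition~\ref{extension}, Corollary~\ref{projective2}, and Theorem~\ref{enough} respectively, with $\Hom_A(M,-)$ replaced throughout by $\mathbf{F}$ and with the condition $\Ext^1_A(M,\mathcal{X})=0$ replaced by the exactness of $\mathbf{F}$. The proofs transplant directly because they rely only on the generic Snake Lemma and on the pullback/pushout machinery of Lemmas~\ref{pushout1} and~\ref{pushout2}. As in Section~\ref{3}, I introduce the functors $\mathbf{p}\colon\mathcal{X}\to\mathcal{E}(\mathcal{Y},\mathbf{F}\mathcal{X})$, $X\mapsto(\mathbf{F}X,\mathbf{F}X)_{1}$; $\mathbf{q}\colon\mathcal{Y}\to\mathcal{E}(\mathcal{Y},\mathbf{F}\mathcal{X})$, $Y\mapsto(Y,\mathbf{F}0)_{0}$; and the evident $\mathbf{P},\mathbf{Q},\mathbb{Q}$ in the reverse direction. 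Under the hypotheses of~(2), $\mathbf{p}$ is exact (because $\mathbf{F}$ is) and $\mathbf{p}$ lands in $\mathcal{E}(\mathcal{Y},\mathbf{F}\mathcal{X})$ (because $\mathbf{F}\mathcal{X}\subseteq\mathcal{Y}$).

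For (1), take a short exact sequence $0\to(Y_1,\mathbf{F}X_1)_{f_1}\to(Y_2,\mathbf{F}X_2)_{f_2}\to(Y_3,\mathbf{F}X_3)_{f_3}\to 0$ in $(\mathcal{B},\mathbf{F}\mathcal{A})$ whose outer terms lie in $\mathcal{E}(\mathcal{Y},\mathbf{F}\mathcal{X})$. Projecting to $\mathcal{A}$ and to $\mathcal{B}$ gives short exact sequences there; closure of $\mathcal{X}$ under extensions yields $X_2\in\mathcal{X}$, and left-exactness of $\mathbf{F}$ yields an exact bottom row $0\to\mathbf{F}X_1\to\mathbf{F}X_2\to\mathbf{F}X_3$. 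The Snake Lemma applied to the resulting ladder produces the six-term exact sequence
\[
0\to\Ker(f_1)\to\Ker(f_2)\to\Ker(f_3)\to\Coker(f_1)\to\Coker(f_2)\to\Coker(f_3).
\]
Since $f_1$ and $f_3$ are surjective we get $\Coker(f_2)=0$; since $\Ker(f_1),\Ker(f_3)\in\mathcal{Y}$ and $\mathcal{Y}$ is closed under extensions, $\Ker(f_2)\in\mathcal{Y}$. Hence $(Y_2,\mathbf{F}X_2)_{f_2}\in\mathcal{E}(\mathcal{Y},\mathbf{F}\mathcal{X})$.

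For (2)(i), the plan is to mimic Lemma~\ref{projective} followed by Corollary~\ref{projective2}. Exactness of $\mathbf{F}$ makes $\mathbf{p}$ exact, so the technical lifting condition (*) of Lemma~\ref{projective} holds automatically, while the inclusion $\mathbf{F}\mathcal{X}\subseteq\mathcal{Y}$ ensures that the short exact sequence $0\to\Ker(f)\to Y\to\mathbf{F}X\to 0$ attached to any object of $\mathcal{E}(\mathcal{Y},\mathbf{F}\mathcal{X})$ lies entirely in $\mathcal{Y}$ -- exactly the shape in which the third hypothesis of Lemma~\ref{projective} is required. The full diagram chase producing a lift (via projectivity of $X$ to obtain the first coordinate, then the universal property of the kernel and projectivity of $Y$ to correct the second) transports unchanged. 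For the converse, apply the exact functors $\mathbf{P}$ and $\mathbf{Q}$ to short exact sequences of the form $0\to\mathbf{p}(X_1)\to\mathbf{p}(X_2)\to\mathbf{p}(X_3)\to 0$ and $0\to\mathbf{q}(Y_1)\to\mathbf{q}(Y_2)\to\mathbf{q}(Y_3)\to 0$ respectively.

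For (2)(ii), the ``only if'' direction again follows from exactness of $\mathbf{P}$ and $\mathbf{Q}$ together with $\mathbf{P}\mathbf{p}=\id_{\mathcal{X}}$ and $\mathbf{Q}\mathbf{q}=\id_{\mathcal{Y}}$. For the ``if'' direction, given $(Y,\mathbf{F}X)_f$, first pick $0\to K\to P\xrightarrow{\alpha}X\to 0$ with $P$ projective in $\mathcal{X}$, apply the exact $\mathbf{F}$, and form the pullback $T$ of $f$ along $\mathbf{F}\alpha$. Lemma~\ref{pushout1} identifies $\Ker(T\to\mathbf{F}P)\cong\Ker(f)$ and $\Ker(T\to Y)\cong\mathbf{F}K$; since $Y\in\mathcal{Y}$ and $\mathbf{F}K\in\mathbf{F}\mathcal{X}\subseteq\mathcal{Y}$, the short exact sequence $0\to\mathbf{F}K\to T\to Y\to 0$ gives $T\in\mathcal{Y}$. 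Then choose $0\to N\to Q\xrightarrow{e}T\to 0$ with $Q$ projective in $\mathcal{Y}$, and apply Lemma~\ref{pushout2} twice (to the pullback squares over $Y$ and over $\mathbf{F}P$) to assemble a short exact sequence $0\to(N,\mathbf{F}K)_{*}\to(Q,\mathbf{F}P)_{*}\to(Y,\mathbf{F}X)_f\to 0$ in $\mathcal{E}(\mathcal{Y},\mathbf{F}\mathcal{X})$ whose middle term is projective by~(i). The main obstacle I anticipate is purely bookkeeping of the several commutative diagrams involved; no new idea beyond Section~\ref{3} is required.
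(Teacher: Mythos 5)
Your proposal is correct and follows exactly the route the paper takes: the paper's own proof of this proposition consists of a single sentence asserting that the arguments of Proposition~\ref{extension}, Corollary~\ref{projective2} (via Lemma~\ref{projective}), and Theorem~\ref{enough} transplant verbatim with $\Hom_A(M,-)$ replaced by $\mathbf{F}$ and $\Ext^1_A(M,\mathcal{X})=0$ replaced by exactness of $\mathbf{F}$. Your write-up simply makes explicit the substitutions (and the comma-category analogues of $\mathbf{p}$, $\mathbf{q}$, $\mathbf{P}$, $\mathbf{Q}$) that the paper leaves implicit.
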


\begin{proof} The proofs of (1), 2(i), and 2(ii) are essentially the same as those of Proposition~\ref{extension}, Corollary~\ref{injective2} and Theorem~\ref{enough}, respectively.
\end{proof}

Dually we have the following result.

\begin{Proposition} \label{M} Suppose that $\mathbf{G}$ is right exact and both $\mathcal{X}$ and $\mathcal{Y}$ are closed under extensions.

{\rm (1)} $\mathcal{M}(\mathbf{G}\mathcal{Y},\mathcal{X})$ is closed under extensions;

{\rm (2)} If assume further that  $\mathbf{G}$ is exact and $\mathbf{G}\mathcal{Y}\subseteq \mathcal{X}$,  then

{\rm (i)}  a triple $(\mathbf{G}Y,X)_f$ is a projective object of $\mathcal{M}(\mathbf{G}\mathcal{Y},\mathcal{X})$ if and only if $X$ is an injective object of $\mathcal{X}$ and $Y$ is an injective object of $\mathcal{Y}$;

{\rm (ii)} $\mathcal{M}(\mathbf{G}\mathcal{Y},\mathcal{X})$ has enough injective objects if and only if both $\mathcal{X}$ and $\mathcal{Y}$ have enough injective objects.
\end{Proposition}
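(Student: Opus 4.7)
The strategy is to dualize each of the three steps carried out for $\mathcal{E}(\mathcal{Y},\mathbf{F}\mathcal{X})$ in Proposition~\ref{E}; equivalently, each part is a direct translation of one of Proposition~\ref{extension}, Corollary~\ref{projective2}, and Theorem~\ref{enough} into this dual comma-category setting. The operating dictionary is: the left-exact functor $\Hom_A(M,-)$ of Section~\ref{3} is replaced by the right-exact functor $\mathbf{G}$; kernels become cokernels; injective maps become surjective maps; projective covers become injective envelopes; and pullbacks become pushouts.

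For (1), I would take a short exact sequence
\[
0 \to (\mathbf{G}Y_1, X_1)_{f_1} \to (\mathbf{G}Y_2, X_2)_{f_2} \to (\mathbf{G}Y_3, X_3)_{f_3} \to 0
\]
in $(\mathbf{G}\mathcal{B},\mathcal{A})$ with both outer terms in $\mathcal{M}(\mathbf{G}\mathcal{Y},\mathcal{X})$. Closure of $\mathcal{Y}$ under extensions gives $Y_2\in\mathcal{Y}$. Right-exactness of $\mathbf{G}$ makes the induced top row $\mathbf{G}Y_1\to\mathbf{G}Y_2\to\mathbf{G}Y_3\to 0$ in the obvious ladder with the $f_i$'s and the $X_i$'s exact, and the Snake Lemma then yields a six-term exact sequence $0\to \Ker f_1\to \Ker f_2\to \Ker f_3\to \Coker f_1\to \Coker f_2\to \Coker f_3\to 0$. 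The hypotheses $\Ker f_1=\Ker f_3=0$ and $\Coker f_1,\Coker f_3\in \mathcal{X}$, together with $\mathcal{X}$ being extension-closed, force $\Ker f_2=0$ and $\Coker f_2\in\mathcal{X}$. This is the exact dual of the proof of Proposition~\ref{extension}.

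For (2)(i), under the extra hypotheses that $\mathbf{G}$ is exact and $\mathbf{G}\mathcal{Y}\subseteq\mathcal{X}$, I would introduce the auxiliary functors $\mathbf{p}':\mathcal{X}\to \mathcal{M}(\mathbf{G}\mathcal{Y},\mathcal{X}),\ X\mapsto (0,X)_0$ and $\mathbf{q}':\mathcal{Y}\to\mathcal{M}(\mathbf{G}\mathcal{Y},\mathcal{X}),\ Y\mapsto (\mathbf{G}Y,\mathbf{G}Y)_1$, both of which are well-defined and exact. Dualizing Lemma~\ref{projective} (equivalently, transporting the argument of Proposition~\ref{injective2}), necessity follows by applying the two exact coordinate functors $(\mathbf{G}Y,X)_f\mapsto X$ and $(\mathbf{G}Y,X)_f\mapsto Y$ to any embedding of $(\mathbf{G}Y,X)_f$; sufficiency follows because the canonical short exact sequence $0\to (\mathbf{G}Y,\mathbf{G}Y)_1\to (\mathbf{G}Y,X)_f\to (0,\Coker f)_0\to 0$ realizes $(\mathbf{G}Y,X)_f$ as an extension of $\mathbf{p}'(\Coker f)$ by $\mathbf{q}'(Y)$, both injective under the assumption that $X$ and $Y$ are.

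For (2)(ii), the ``only if'' direction is immediate from exactness of the coordinate functors just mentioned. For ``if'', given $(\mathbf{G}Y,X)_f\in \mathcal{M}(\mathbf{G}\mathcal{Y},\mathcal{X})$, I would dualize the pullback/projective-cover construction in Theorem~\ref{enough}: first take a short exact sequence $0\to X\xrightarrow{\alpha}E\to L\to 0$ in $\mathcal{X}$ with $E$ injective and form the pushout of $f$ along $\alpha$ to embed $(\mathbf{G}Y,X)_f$ into an object of the form $(\mathbf{G}Y, E)_{g}$; then take $0\to Y\to I\to N\to 0$ in $\mathcal{Y}$ with $I$ injective and use exactness of $\mathbf{G}$ to extend further to $(\mathbf{G}I, E)_{g'}$, invoking Lemmas~\ref{pushout1} and~\ref{pushout2} at each step to guarantee that the intermediate short sequences stay inside $\mathcal{M}(\mathbf{G}\mathcal{Y},\mathcal{X})$. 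The main obstacle I anticipate is precisely this two-stage diagram chase: one must verify that after both pushouts the resulting structure map is still injective, with cokernel in $\mathcal{X}$ and first coordinate still of the form $\mathbf{G}(-)$ with $(-)\in\mathcal{Y}$. This is routine but bookkeeping-heavy, and mirrors the delicate two-stage pullback argument in Theorem~\ref{enough}.
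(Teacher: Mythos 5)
Your part (1) is correct and is the exact dual of Proposition~\ref{extension} (one small inaccuracy: the Snake Lemma does not supply the leading $0\to\Ker f_1$ of your six-term sequence, since $\mathbf{G}Y_1\to\mathbf{G}Y_2$ need not be monic when $\mathbf{G}$ is only right exact; this is harmless because $\Ker f_1=\Ker f_3=0$ by hypothesis, which is all the argument needs). You also correctly read the statement's ``projective object'' in 2(i) as the intended ``injective object'', and your necessity arguments, once spelled out as liftings of test sequences from $\mathcal{X}$ and $\mathcal{Y}$ into $\mathcal{M}(\mathbf{G}\mathcal{Y},\mathcal{X})$ via $X'\mapsto(0,X')_0$ and $Y'\mapsto(\mathbf{G}Y',\mathbf{G}Y')_1$, are sound.

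The genuine gap is in your sufficiency argument for 2(i). You claim that in the short exact sequence $0\to(\mathbf{G}Y,\mathbf{G}Y)_1\to(\mathbf{G}Y,X)_f\to(0,\Coker f)_0\to 0$ both end terms are injective once $X\in\mathcal{I}(\mathcal{X})$ and $Y\in\mathcal{I}(\mathcal{Y})$. By the very characterization you are proving, $(\mathbf{G}Y,\mathbf{G}Y)_1$ is injective only if $\mathbf{G}Y\in\mathcal{I}(\mathcal{X})$, and $(0,\Coker f)_0$ is injective only if $\Coker f\in\mathcal{I}(\mathcal{X})$; neither follows from $X\in\mathcal{I}(\mathcal{X})$. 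Indeed, the paper treats the inclusion $\mathbf{G}\,\mathcal{I}(\mathcal{Y})\subseteq\mathcal{I}(\mathcal{X})$ (in the module case $M\otimes_B\mathcal{I}(\mathcal{Y})\subseteq\mathcal{I}(\mathcal{X})$) as a genuinely additional hypothesis in Theorem~\ref{frobeniusm}, so it cannot come for free here. Concretely, take $\mathbf{G}=M\otimes_B-$ with $M$ flat, $\mathcal{X}=A$-Mod, $\mathcal{Y}=B$-Mod, an injective $B$-module $V$ with $M\otimes_BV$ not injective over $A$, and an embedding of $M\otimes_BV$ into an injective $A$-module $E$: the resulting object of $\mathcal{M}(A,M,B)$ is injective by Proposition~\ref{injective2}, yet both pieces of your decomposition fail to be injective. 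The intended proof is the dual of the diagram chase in Lemma~\ref{projective} and Corollary~\ref{projective2}: given an inflation in $\mathcal{M}(\mathbf{G}\mathcal{Y},\mathcal{X})$ and a map into $(\mathbf{G}Y,X)_f$, first extend the $\mathcal{Y}$-coordinate using injectivity of $Y$ in $\mathcal{Y}$, then correct the $\mathcal{A}$-coordinate using injectivity of $X$ with respect to short exact sequences lying in $\mathcal{X}$ (this is where $\mathbf{G}\mathcal{Y}\subseteq\mathcal{X}$ and exactness of $\mathbf{G}$ enter, as the dual of condition (iii) of Lemma~\ref{projective}). Since your ``if'' direction of 2(ii) relies on 2(i) to certify that the object $(\mathbf{G}I,E)_{g'}$ you construct is injective, it inherits this gap; the two-stage pushout itself is the right dual of Theorem~\ref{enough}, although the order matching that proof is: injective envelope of $Y$ in $\mathcal{Y}$ first, then pushout along $\mathbf{G}Y\to\mathbf{G}I$, then injective envelope in $\mathcal{X}$.
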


\section*{Appendix}

In this appendix, we give a proof of Lemma~\ref{leftright}.
Some preparations are needed.

\begin{Lemma}  \label{faithful} Let $\mathcal{S}$ be a thick triangulated subcategory of a triangulated category $\mathcal{T}$ and $\mathbf{Q}: \mathcal{T}\rightarrow \mathcal{T/S}$ the quotient functor. If $\mathbf{Q}$ has a right (resp. left) adjoint $\mathbf{R}$ (resp. $\mathbf{L}$), then $\mathbf{R}$ (resp. $\mathbf{L}$) is fully faithful.
\end{Lemma}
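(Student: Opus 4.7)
I will prove the right-adjoint case; the left-adjoint statement is formally dual, swapping units with counits and $\mathcal{S}^{\perp}$ with ${}^{\perp}\mathcal{S}$. By the standard categorical equivalence, $\mathbf{R}$ is fully faithful if and only if the counit $\epsilon\colon\mathbf{Q}\mathbf{R}\Rightarrow\mathrm{Id}_{\mathcal{T}/\mathcal{S}}$ is a natural isomorphism, so the whole proof reduces to showing $\epsilon$ is invertible. Because the Verdier quotient $\mathbf{Q}$ is the identity on objects, every object of $\mathcal{T}/\mathcal{S}$ has the form $\mathbf{Q}X$, so it suffices to check $\epsilon_{\mathbf{Q}X}$ is an isomorphism for every $X\in\mathcal{T}$.

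My first step is to use the triangle identity $\epsilon_{\mathbf{Q}X}\circ \mathbf{Q}(\eta_X)=\mathrm{id}_{\mathbf{Q}X}$ to exhibit $\epsilon_{\mathbf{Q}X}$ as a split epimorphism admitting $\mathbf{Q}(\eta_X)$ as a section. In a triangulated category a split epimorphism is an isomorphism precisely when its cone vanishes, and this is equivalent to $\mathbf{Q}(\eta_X)$ being an isomorphism. Completing $\eta_X\colon X\to\mathbf{R}\mathbf{Q}X$ to a distinguished triangle $X\to\mathbf{R}\mathbf{Q}X\to C\to\Sigma X$ in $\mathcal{T}$ and applying the exact functor $\mathbf{Q}$, the cone of $\mathbf{Q}(\eta_X)$ in $\mathcal{T}/\mathcal{S}$ is $\mathbf{Q}(C)$. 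Since $\mathcal{S}$ is thick, $\mathbf{Q}(C)=0$ if and only if $C\in\mathcal{S}$, so the whole problem reduces to showing $C\in\mathcal{S}$. The adjunction already delivers one piece of information: for every $S\in\mathcal{S}$,
\begin{equation*}
\mathrm{Hom}_{\mathcal{T}}(S,\mathbf{R}\mathbf{Q}X)\cong\mathrm{Hom}_{\mathcal{T}/\mathcal{S}}(\mathbf{Q}S,\mathbf{Q}X)=0,
\end{equation*}
so $\mathbf{R}\mathbf{Q}X\in\mathcal{S}^{\perp}$.

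The expected main obstacle is the verification that $C\in\mathcal{S}$. A naive long-exact-sequence chase--applying $\mathrm{Hom}_{\mathcal{T}}(-,\mathbf{R}\mathbf{Q}Y)$ to the triangle, translating through the adjunction, and using that precomposition with $\mathbf{Q}(\eta_X)$ is split surjective--turns out to be circular, recovering only $\mathbf{Q}(C)\cong\mathbf{Q}(C)$ without any new constraint. The workable route is to exploit the calculus of fractions in the Verdier quotient: any morphism in $\mathcal{T}/\mathcal{S}$ is a roof whose backwards arrow has cone in $\mathcal{S}$, and applying this description to the section $\mathbf{Q}(\eta_X)$ of $\epsilon_{\mathbf{Q}X}$ produces concrete morphisms $X'\xrightarrow{s}X$ and $X'\to\mathbf{R}\mathbf{Q}X$ in $\mathcal{T}$ with $\mathrm{cone}(s)\in\mathcal{S}$. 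Comparing these with $\eta_X$ via the octahedral axiom and using the orthogonality $\mathbf{R}\mathbf{Q}X\in\mathcal{S}^{\perp}$ to control the error terms then pins the cone $C$ of $\eta_X$ inside $\mathcal{S}$. Once $C\in\mathcal{S}$ is established, $\mathbf{Q}(\eta_X)$ and therefore $\epsilon_{\mathbf{Q}X}$ are isomorphisms; by naturality this holds uniformly, the counit is a natural isomorphism, and $\mathbf{R}$ is fully faithful as required.
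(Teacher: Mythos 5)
Your reduction is sound up to and including the point where you establish $\mathbf{R}\mathbf{Q}X\in\mathcal{S}^{\perp}$: passing to the counit, noting $\mathbf{Q}$ is surjective on objects, using the triangle identity to reduce to invertibility of $\mathbf{Q}(\eta_X)$, identifying its cone with $\mathbf{Q}(C)$, and invoking thickness to translate this into $C\in\mathcal{S}$ are all correct. (For what it is worth, the paper itself does not prove this lemma; it cites Krause, and the relevant auxiliary statement appears separately as its Lemma~\ref{local}.)

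The genuine gap is the final paragraph. You correctly identify that showing $C\in\mathcal{S}$ is the entire content of the lemma, you correctly observe that the naive long-exact-sequence chase is circular, and then you replace the decisive step by an assertion: ``comparing these with $\eta_X$ via the octahedral axiom and using the orthogonality \dots pins the cone $C$ of $\eta_X$ inside $\mathcal{S}$.'' No octahedron is exhibited, no error terms are controlled, and it is not even clear what roof you intend to resolve, since $\mathbf{Q}(\eta_X)$ is the image of an honest morphism of $\mathcal{T}$ and its canonical roof presentation carries no new information; it is $\epsilon_{\mathbf{Q}X}$, not its section, that is a nontrivial fraction. The standard way to close the gap avoids the octahedral axiom at this stage entirely: since $\mathbf{R}\mathbf{Q}X\in\mathcal{S}^{\perp}$, the localization lemma (the paper's Lemma~\ref{local}, whose proof is where the calculus of fractions actually enters) gives that $\mathbf{Q}$ induces a bijection $\Hom_{\mathcal{T}}(Y,\mathbf{R}\mathbf{Q}X)\to\Hom_{\mathcal{T}/\mathcal{S}}(\mathbf{Q}Y,\mathbf{Q}\mathbf{R}\mathbf{Q}X)$ for every $Y$; composing its inverse with the adjunction isomorphism $\Hom_{\mathcal{T}}(Y,\mathbf{R}\mathbf{Q}X)\cong\Hom_{\mathcal{T}/\mathcal{S}}(\mathbf{Q}Y,\mathbf{Q}X)$ shows that post-composition with $\epsilon_{\mathbf{Q}X}$ is bijective on $\Hom(\mathbf{Q}Y,-)$ for all $Y$, hence on all representable functors, so $\epsilon_{\mathbf{Q}X}$ is an isomorphism by Yoneda. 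Until a step of this kind is actually written out, your argument proves only that the lemma would follow if $C\in\mathcal{S}$, which is a restatement of the goal rather than a proof of it.
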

\begin{proof} This follows from  \cite[Proposition 1.1.2, Lemma 3.2.1 and Proposition 3.2.2]{H}.
\end{proof}

\begin{Lemma} \label{local} Let $\mathbf{Q}: \mathcal{T}\rightarrow \mathcal{T/S}$ be the quotient functor.

{\em (1) } If $T\in \mathcal{T}$ and $X\in \mathcal{S}^{\bot}$, then $\Hom_{\mathcal{T}}(T,X)\cong \Hom_{\mathcal{T/S}}(\mathbf{Q}(T),\mathbf{Q}(X))$.

{\em (2) }   If $T\in \mathcal{T}$ and $X\in ^{\bot}\mathcal{S}$, then $\Hom_{\mathcal{T}}(X,T)\cong \Hom_{\mathcal{T/S}}(\mathbf{Q}(X),\mathbf{Q}(T))$.

\end{Lemma}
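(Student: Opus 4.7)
The plan is to use the Verdier calculus of fractions on $\mathcal{T}/\mathcal{S}$ together with the long exact sequence induced by $\Hom_{\mathcal{T}}(-,X)$. The natural map to analyze is $\Phi : \Hom_{\mathcal{T}}(T,X) \to \Hom_{\mathcal{T}/\mathcal{S}}(\mathbf{Q}(T), \mathbf{Q}(X))$, $f \mapsto \mathbf{Q}(f)$, and under the hypothesis $X \in \mathcal{S}^{\bot}$ I want to show it is bijective for every $T$.

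The key step is the following observation. Let $s : T' \to T$ be any morphism whose cone $C$ lies in $\mathcal{S}$, and complete it to a distinguished triangle $T' \xrightarrow{s} T \to C \to T'[1]$. Applying the cohomological functor $\Hom_{\mathcal{T}}(-, X)$ yields the exact segment
\[
\Hom_{\mathcal{T}}(C, X) \to \Hom_{\mathcal{T}}(T, X) \xrightarrow{s^*} \Hom_{\mathcal{T}}(T', X) \to \Hom_{\mathcal{T}}(C[-1], X).
\]
Since $\mathcal{S}$ is thick triangulated it is closed under shifts, so both $C$ and $C[-1]$ lie in $\mathcal{S}$; combined with $X \in \mathcal{S}^{\bot}$, both outer groups vanish and $s^*$ is an isomorphism.

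From this, both halves of the bijectivity of $\Phi$ drop out. Recall that every element of $\Hom_{\mathcal{T}/\mathcal{S}}(\mathbf{Q}(T), \mathbf{Q}(X))$ is represented by a left fraction $T \xleftarrow{s} T' \xrightarrow{f} X$ with $\mathrm{cone}(s) \in \mathcal{S}$. For surjectivity, the above isomorphism produces a unique $\tilde{f} : T \to X$ with $\tilde{f}\circ s = f$; then $(s, f)$ and $(\id_T, \tilde{f})$ are equivalent roofs (witnessed by $s : T' \to T$ itself), so the given class equals $\mathbf{Q}(\tilde{f})$. For injectivity, a morphism $f : T \to X$ satisfying $\mathbf{Q}(f) = 0$ must fulfill $f \circ s = 0$ for some $s$ with $\mathrm{cone}(s) \in \mathcal{S}$, and the injectivity of $s^*$ then forces $f = 0$.

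Part (2) is perfectly dual: one now uses right fractions $X \xrightarrow{g} T' \xleftarrow{t} T$ with $\mathrm{cone}(t) \in \mathcal{S}$, and applies the covariant functor $\Hom_{\mathcal{T}}(X, -)$ to the triangle $T \xrightarrow{t} T' \to C \to T[1]$. The hypothesis $X \in {}^{\bot}\mathcal{S}$ annihilates $\Hom_{\mathcal{T}}(X, C)$ and $\Hom_{\mathcal{T}}(X, C[-1])$, so $t_*$ becomes an isomorphism and the same reasoning closes the argument. The main obstacle I foresee is pure bookkeeping: verifying the equivalence of $(s, f)$ with $(\id_T, \tilde{f})$ in the calculus of left fractions, and recalling that $\mathbf{Q}(f) = 0$ really implies $f \circ s = 0$ for some admissible $s$. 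Both are standard consequences of the Ore-type conditions satisfied by $\mathcal{S}$ and can be quoted from \cite{H} rather than re-derived.
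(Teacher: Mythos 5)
Your argument is correct: the vanishing of $\Hom_{\mathcal{T}}(C,X)$ and $\Hom_{\mathcal{T}}(C[-1],X)$ for cones $C\in\mathcal{S}$ makes $s^*$ invertible, and combined with the standard facts about roofs and about when $\mathbf{Q}(f)=0$, this gives bijectivity of $f\mapsto\mathbf{Q}(f)$; the dual co-roof argument for (2) is likewise sound. The paper offers no argument of its own here --- it simply cites \cite[Lemmas 1.2.6, 3.2.1 and Proposition 3.2.2]{H} --- and what you have written is precisely the standard proof those references encapsulate, so your proposal matches the paper's (implicit) approach.
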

\begin{proof} This follows from  \cite[Lemmas 1.2.6, 3.2.1 and Proposition 3.2.2]{H}
\end{proof}

We now prove the right case of Lemma~\ref{leftright}. (The proof of the left case is dual, and thus omitted.)
\begin{proof}
 $\mathrm{(1)}\Rightarrow \mathrm{(2)}$ Let  $\mathbf{R}$ be the right adjoint to  $\mathbf{Q}$ and  $\eta:1_{\mathcal{T}}\rightarrow \mathbf{RQ}$ the unit of  $(\mathbf{Q,R})$. Then,  for any  $T\in \mathcal{T}$,   the morphism  $T\xrightarrow{\eta_T}\mathbf{RQ}(T)$  fits into an exact triangle of $\mathcal{T}$: $$S\rightarrow T\xrightarrow{\eta_T} \mathbf{RQ}(T)\rightarrow S[1].$$  This gives rise to   an exact triangle $\mathbf{Q}(S)\rightarrow \mathbf{Q}(T)\xrightarrow{\mathbf{Q} (\eta_T)} \mathbf{QRQ}(T)\rightarrow \mathbf{Q}(T)[1]$ of $\mathcal{T/S}$.     Since $\mathbf{R}$ is fully faithful by Lemma~\ref{faithful},  $\mathbf{Q}(\eta_T)$ is an isomorphism. From this it follows that  $\mathbf{Q}(S)=0$ and so $S\in \mathcal{S}$.  On the other hand, we have $$\mathrm{Hom}_{\mathcal{T}}(Y, \mathbf{RQ}(T))\cong \mathrm{Hom}_{\mathcal{T/S}}(\mathbf{Q}(Y), \mathbf{Q}(T))=0$$ for any $Y\in \mathcal{S}$ and this implies $\mathbf{RQ}(T)\in \mathcal{S}^{\bot}$. Hence $(\mathcal{S},\mathcal{S}^{\bot})$ is a torsion pair.

$\mathrm{(3)}\Rightarrow \mathrm{(2)}$  can be proved in a similar way as in  $\mathrm{(1)}\Rightarrow \mathrm{(2)}$

$\mathrm{(2)}\Rightarrow \mathrm{(1)}$  Let $\mathbf{Q}': \mathcal{S}^{\bot}\rightarrow \mathcal{T/S}$ be the restriction functor of $\mathbf{Q}$. Then $\mathbf{Q}'$ is fully faithful by Lemma~\ref{local}. Since $(\mathcal{S},\mathcal{S}^{\bot})$ is a torsion pair,  there is for any  $T\in \mathcal{T}$ an exact triangle $S\rightarrow T\rightarrow Z\rightarrow Y[1]$ such that  $S\in \mathcal{S}$ and $Z\in \mathcal{S}^{\bot}$. Note that $\mathbf{Q}(T)\cong \mathbf{Q}(Z)$,  it follows that $\mathbf{Q}'$ is {\it dense} (i.e.,  there is for any $X\in \mathcal{T/S}$ an object $Z\in \mathcal{S}^{\bot}$ such that $X\cong \mathbf{Q}'(Z)$), and so it is an equivalence.

Now set $\mathbf{R}=\mathbf{jF}$, where $\mathbf{j}:\mathcal{S}^{\bot}\rightarrow \mathcal{T}$ is the embedding functor and $\mathbf{F}$ is the quasi-inverse to $\mathbf{Q}'$.  Then, for any $T\in \mathcal{T}$ and $X\in \mathcal{T/S}$, we have
 \begin{equation*}\begin{aligned}\Hom_{\mathcal{T}}(T,\mathbf{R}(X))&=\Hom_{\mathcal{T}}(T,\mathbf{F}(X))\\
&\cong\Hom_{\mathcal{T/S}}(\mathbf{Q}(T),\mathbf{QF}(X))\cong \Hom_{\mathcal{T/S}}(\mathbf{Q}(T),X).
\end{aligned}
\end{equation*}
Here the second isomorphism follows from Lemma~\ref{local}. Consequently, $\mathbf{R}$ is the right adjoint to $\mathbf{Q}$.

$\mathrm{(2)}\Rightarrow \mathrm{(3)}$ For any $T\in \mathcal{T}$, let  $\mathbf{u}(T)\xrightarrow{\phi_T} T\rightarrow \mathbf{v}(T)\rightarrow \mathbf{u}(T)[1]$ be an exact triangle such that $\mathbf{u}(T)\in \mathcal{S}$ and $\mathbf{v}(T)\in \mathcal{S}^{\bot}$.

 Given $S,T\in \mathcal{T}$. Note that $\Hom(\mathbf{u}(T), \phi_S): \Hom_{\mathcal{T}}(\mathbf{u}(T),\mathbf{u}(S))\rightarrow \Hom_{\mathcal{T}}(\mathbf{u}(T),S)$ is an isomorphism, there exists for any morphism $f:T\rightarrow S$ a unique morphism $\mathbf{u}(f):\mathbf{u}(T)\rightarrow \mathbf{u}(S)$ such that the the following diagram is commutative:
\begin{equation*}  \label{CD3}\xymatrix{
    &   \mathbf{u}(T)\ar[r]^{\phi_T} \ar[d]_{\mathbf{u}(f)} & T \ar[d]_{f}  \\
    &    \mathbf{u}(S)\ar[r]^{\phi_S} & S   }
\end{equation*}
This implies that $\mathbf{u}(T)$ is uniquely determined up to isomorphism  by $T$ and that $\mathbf{u}$ is a functor from $\mathcal{T}$ to $\mathcal{S}$. By an easy check, $\mathbf{u}$ is the right adjoint to $\mathbf{i}$.
\end{proof}
\noindent{\bf Acknowledgement}:  This project is supported by  NSFC (No. 11971338)


\begin{thebibliography}{}

\bibitem{AS} J. Asadollahi, S. Salarian, On the vanishing of Ext over formal triangular matrix rings, Forum Math. 18(2006) 951-966.


\bibitem{AR} M. Auslander, I. Reiten, S.O. Smal$\mathrm{\phi}$, Representation Theory of Artin Algebras, Cambridge Stud. Adv. Math., vol. 36, Cambridge University Press, Cambridge, 1995.



\bibitem{Bu} T. B$\mathrm{\ddot{u}}$hler, Exact categories, Expositions Math. 28(2010)  1-69.




\bibitem{CPS1} E.~Cline, B.~Parshall,   L.~Scott, Algebraic stratification in representation categories, J. Algebra, 117(1988) 504-521.



 \bibitem{CPS2} E.~Cline, B.~Parshall,   L.~Scott, Finite dimensional algebras and highest weight categories. J. fur Reine Angew.
Math. 391(1988) 85-99.



\bibitem{GT} R. G$\mathrm{\ddot{o}}$bel, J. Trlifaj,  Approximations and Endomorphism Algebras of Modules, GEM, vol.41, Walter de Gruyter, Berlin-New York, 2006.

    \bibitem{FGR} R.M. Fossum, P.A. Griffith, I. Reiten, Trivial Extensions of Abelian Categories, Lecture Notes in Math., vol. 456, Springer-Verlag,
Berlin, 1975.

\bibitem{GR} E.L. Green, On the representation theory of rings in matrix form, Pac. J. Math. 100(1982) 123-138.

\bibitem{HV1} A.~Haghany, K.~Varadarajan, Study of formal triangular matrix rings,  Comm. Algebra, 27(11):(1999) 5507-5525.

\bibitem{HV2}  A.~Haghany, K.~Varadarajan, Study of modules over formal triangular matrix rings,    J. Pure Appl. Algebra, 147(1):(2000) 41-58.

\bibitem{I} B.~Iversen, Cohomology of Sheaves, Universitext, Berlin: Springer-Verlag, 1986.



    \bibitem{H} H.~Krause, Homological Theory of Representations, Cambridge Stud. Adv. Math., vol. 195, Cambridge University Press, Cambridge, 2021.



 \bibitem{Mao1} L.X. Mao,  Gorenstein flat modules and dimensions over formal triangular matrix rings, J. Pure Appl. Algebra,  224(2020) 106207.

\bibitem{Mao2} L.X. Mao, Cotorsion pairs and approximation classes over formal triangular
matrix rings, J. Pure Appl. Algebra 224(2020) 106271.

\bibitem{WL}
M.X.~Wang, Z.Q.~Lin, Recollements of additive quotient categories, arXiv:1502.00479v1

\bibitem{XZZ} Baolin Xiong, Pu Zhang, Yuehui Zhang,  Bimodule monomorphism categories and RSS
equivalences via cotilting modules,  J. Algebra, 503(2018) 21-55.




\end{thebibliography}
\end{document}